\documentclass[11pt]{article}
\usepackage{amsthm}
\usepackage{amsmath}
\usepackage{amssymb}
\usepackage{makeidx}
\theoremstyle{definition}
\newtheorem{definition}{Definition}[section]
\newtheorem{notation}[definition]{Notation}
\newtheorem{example}[definition]{Example}

\theoremstyle{plain}
\newtheorem{theorem}[definition]{Theorem}
\newtheorem{lemma}[definition]{Lemma}
\newtheorem{proposition}[definition]{Proposition}
\newtheorem{corollary}[definition]{Corollary}
\newtheorem{remark}[definition]{Remark}
\newtheorem{conjecture}[definition]{Conjecture}

\newcommand{\beq}{\begin{equation}}

\newcommand{\eeq}{\end{equation}}

\newcommand{\bdfn}{\begin{definition}}

\newcommand{\edfn}{\end{definition}}

\newcommand{\bthm}{\begin{theorem}}

\newcommand{\ethm}{\end{theorem}}

\newcommand{\bprop}{\begin{proposition}}

\newcommand{\eprop}{\end{proposition}}

\newcommand{\bcor}{\begin{corollary}}

\newcommand{\ecor}{\end{corollary}}

\newcommand{\blem}{\begin{lemma}}

\newcommand{\elem}{\end{lemma}}

\newcommand{\bex}{\begin{example}}

\newcommand{\eex}{\end{example}}

\newcommand{\bxc}{\begin{exercise}}

\newcommand{\exc}{\end{exercise}}

\newcommand{\bntn}{\begin{notation}}

\newcommand{\entn}{\end{notation}}

\newcommand{\be}{\begin{enumerate}}

\newcommand{\ee}{\end{enumerate}}

\newcommand{\bce}{\begin{center}}

\newcommand{\ece}{\end{center}}

\newcommand{\bi}{\begin{itemize}}

\newcommand{\ei}{\end{itemize}}

\newcommand{\bt}{\begin{tabular}}

\newcommand{\et}{\end{tabular}}

\newcommand{\ra}{\rightarrow}

\newcommand{\si}{\wedge}

\newcommand{\sau}{\vee}

\newcommand{\ba}{\begin{array}} 

\newcommand{\ea}{\end{array}}

\numberwithin{equation}{section}

\def\N{{\mathbb N}}


\newcommand {\bua} {\begin{eqnarray*}}

\newcommand {\eua} {\end {eqnarray*}}

\begin{document}
\title{Co-Stone Residuated Lattices}
\author{Claudia MURE\c{S}AN\thanks{Dedicated to the memory of my dear grandmother, Elena Mircea}\\ \footnotesize University of Bucharest\\ \footnotesize Faculty of Mathematics and Computer Science\\ \footnotesize Academiei 14, RO 010014, Bucharest, Romania\\ \footnotesize Emails: c.muresan@yahoo.com, cmuresan11@gmail.com}
\date{}
\maketitle

\begin{abstract}
In this paper we present some applications of the reticulation of a residuated lattice, in the form of a transfer of properties between the category of bounded distributive lattices and that of residuated lattices through the reticulation functor. The results we are presenting are related to co-Stone algebras; among other applications, we transfer a known characterization of $m$-co-Stone bounded distributive lattices to residuated lattices and we prove that the reticulation functor for residuated lattices preserves the strongly co-Stone hull.\\ {\em 2000 Mathematics Subject Classification:} Primary 03G10, Secondary 06F35.\\ {\em Key words:} residuated lattice, reticulation, Stone algebras, strongly Stone hull.
\end{abstract}

\section{Introduction}

\hspace*{11pt} In \cite{eu1} we gave an axiomatic purely algebraic definition for the reticulation of a residuated lattice, which turned out to be very useful in practice. In this work we present several applications of the reticulation, related to co-Stone algebras, applications in the form of transfers of properties between the category of bounded distributive lattices and the category of residuated lattices through the reticulation functor.

The co-Stone structures were introduced by us as being dual notions to Stone structures. In Section \ref{preliminaries} we introduce their definition, along with recalling some definitions and results that the reader might find necessary for understanding the results in the next sections.

In Section \ref{co-Stonealgebras} we prove the fact that a residuated lattice is co-Stone iff its reticulation is co-Stone and the same is valid for strongly co-Stone structures, then we obtain a structure theorem for $m$-co-Stone residuated lattices, by transferring through the reticulation a known characterization of $m$-co-Stone bounded distributive lattices to residuated lattices. This is the first major example of a result that can be transferred through the reticulation functor from the category of bounded distributive lattices to the category of residuated lattices. It also permits us to state that a residuated lattice is $m$-co-Stone iff its reticulation is $m$-co-Stone. We then bring an argument for our choice of the definition of the co-Stone structures over another definition for them that can be found in mathematical litterature, for instance in \cite{rcig}: the fact that the notion with our definition is transferrable through the reticulation (while the alternate one is not and does not coincide with ours). 

In Section \ref{hull}, we construct the strongly co-Stone hull of a residuated lattice, conjecture a universality property for it, show that it is preserved by the reticulation functor and exemplify its calculation for a finite residuated lattice.

In future papers we will continue our research on the transfer of pro\-per\-ties between the category of bounded distributive lattices and that of residuated lattices through the reticulation functor. This transfer of pro\-per\-ties between different categories is the very purpose of the reticulation.

\section{Preliminaries}
\label{preliminaries}

\begin{definition}
A {\em residuated lattice} is an algebraic structure $(A,\vee ,\wedge ,\odot ,$\linebreak $\rightarrow ,0,1)$, with the first 4 operations binary and the last two constant, such that $(A,\vee ,\wedge ,0,1)$ is a bounded lattice, $(A,\odot ,1)$ is a commutative monoid and the following property, called {\em the law of residuation}, is satisfied: for all $a,b,c\in A$, $a\leq b\rightarrow c$ iff $a\odot b\leq c$, where $\leq $ is the partial order of the lattice $(A,\vee ,\wedge ,0,1)$.
\end{definition}

Here are some examples of residuated lattices that we will use in the sequel, for illustrating various properties and various classes of residuated lattices.

\begin{example}
\label{lrex0}
{\rm  \cite[Section 11.1]{ior1}, \cite{ior}} $A=\{0,a,b,c,1\}$, with the bounded lattice structure given by the Hasse diagram below and the operations that succeed it, is a residuated lattice.

\begin{center}
\begin{picture}(70,95)(0,0)
\put(30,10){\line(-1,1){20}}
\put(30,10){\line(1,1){20}}
\put(30,50){\line(-1,-1){20}}
\put(30,50){\line(1,-1){20}}
\put(30,50){\line(0,1){20}}
\put(30,10){\circle*{3}}
\put(10,30){\circle*{3}}
\put(50,30){\circle*{3}}
\put(30,50){\circle*{3}}
\put(30,70){\circle*{3}}
\put(28,0){$0$}
\put(1,27){$a$}
\put(54,27){$b$}
\put(34,48){$c$}
\put(28,75){$1$}
\end{picture}
\end{center}

\begin{center}
\begin{tabular}{c|ccccc}
$\rightarrow $ & $0$ & $a$ & $b$ & $c$ & $1$ \\ \hline
$0$ & $1$ & $1$ & $1$ & $1$ & $1$ \\
$a$ & $b$ & $1$ & $b$ & $1$ & $1$ \\
$b$ & $a$ & $a$ & $1$ & $1$ & $1$ \\
$c$ & $0$ & $a$ & $b$ & $1$ & $1$ \\
$1$ & $0$ & $a$ & $b$ & $c$ & $1$
\end{tabular}
\end{center}

\noindent and $\odot =\wedge $.
\end{example}

\begin{example}
\label{lrex0,5}
$A=\{0,a,b,c,1\}$, with the lattice structure and the operations presented below, is a residuated lattice:

\begin{center}
\begin{picture}(80,90)(0,0)
\put(38,3){$0$}
\put(40,15){\circle*{3}}
\put(40,15){\line(0,1){20}}
\put(40,35){\circle*{3}}
\put(45,30){$a$}

\put(40,35){\line(-1,1){20}}
\put(40,35){\line(1,1){20}}
\put(20,55){\circle*{3}}

\put(10,52){$b$}
\put(60,55){\circle*{3}}
\put(65,52){$c$}
\put(20,55){\line(1,1){20}}
\put(60,55){\line(-1,1){20}}
\put(40,75){\circle*{3}}
\put(38,80){$1$}
\end{picture}
\end{center}

\begin{center}
\begin{tabular}{c|ccccc}
$\rightarrow $ & $0$ & $a$ & $b$ & $c$ & $1$ \\ \hline
$0$ & $1$ & $1$ & $1$ & $1$ & $1$ \\
$a$ & $0$ & $1$ & $1$ & $1$ & $1$ \\

$b$ & $0$ & $c$ & $1$ & $c$ & $1$ \\
$c$ & $0$ & $b$ & $b$ & $1$ & $1$ \\

$1$ & $0$ & $a$ & $b$ & $c$ & $1$
\end{tabular}
\end{center}

\noindent and $\odot =\wedge $.
\end{example}

\begin{example}
\label{lrex3}
{\rm \cite{ior1}} $A=\{0,a,b,c,d,1\}$, with the structure described below, is a residuated lattice.

\begin{center}
\begin{picture}(100,90)(0,0)
\put(40,11){\circle*{3}}
\put(38,0){$0$}
\put(20,31){\circle*{3}}
\put(10,28){$a$}
\put(60,31){\circle*{3}}
\put(65,28){$b$}
\put(40,51){\circle*{3}}
\put(30,51){$c$}
\put(80,51){\circle*{3}}

\put(85,48){$d$}
\put(60,71){\circle*{3}}

\put(58,76){$1$}
\put(40,11){\line(1,1){40}}
\put(20,31){\line(1,1){40}}
\put(40,11){\line(-1,1){20}}
\put(60,31){\line(-1,1){20}}
\put(80,51){\line(-1,1){20}}
\end{picture}
\end{center}

\begin{center}
\begin{tabular}{cc}
\begin{tabular}{c|cccccc}

$\rightarrow $ & $0$ & $a$ & $b$ & $c$ & $d$ & $1$ \\ \hline
$0$ & $1$ & $1$ & $1$ & $1$ & $1$ & $1$ \\
$a$ & $d$ & $1$ & $d$ & $1$ & $d$ & $1$ \\
$b$ & $c$ & $c$ & $1$ & $1$ & $1$ & $1$ \\
$c$ & $b$ & $c$ & $d$ & $1$ & $d$ & $1$ \\

$d$ & $a$ & $a$ & $c$ & $c$ & $1$ & $1$ \\
$1$ & $0$ & $a$ & $b$ & $c$ & $d$ & $1$
\end{tabular}
& \hspace*{11pt}
\begin{tabular}{c|cccccc}
$\odot $ & $0$ & $a$ & $b$ & $c$ & $d$ & $1$ \\ \hline
$0$ & $0$ & $0$ & $0$ & $0$ & $0$ & $0$ \\
$a$ & $0$ & $a$ & $0$ & $a$ & $0$ & $a$ \\
$b$ & $0$ & $0$ & $0$ & $0$ & $b$ & $b$ \\
$c$ & $0$ & $a$ & $0$ & $a$ & $b$ & $c$ \\
$d$ & $0$ & $0$ & $b$ & $b$ & $d$ & $d$ \\
$1$ & $0$ & $a$ & $b$ & $c$ & $d$ & $1$
\end{tabular}
\end{tabular}
\end{center}

\end{example}

\begin{example}
\label{lrex4}
{\rm \cite{ior1}} $A=\{0,a,b,c,d,e,f,g,1\}$, with the following structure, is a residuated lattice:

\begin{center}
\begin{picture}(120,110)(0,0)
\put(60,15){\circle*{3}}
\put(58,3){$0$}
\put(60,15){\line(-1,1){40}}
\put(40,35){\circle*{3}}
\put(31,30){$a$}
\put(60,15){\line(1,1){40}}
\put(80,35){\circle*{3}}
\put(84,30){$c$}
\put(40,35){\line(1,1){40}}
\put(80,35){\line(-1,1){40}}
\put(60,55){\circle*{3}}
\put(57,42){$d$}
\put(20,55){\circle*{3}}

\put(11,52){$b$}
\put(40,75){\circle*{3}}
\put(31,75){$e$}
\put(100,55){\circle*{3}}
\put(103,52){$f$}
\put(80,75){\circle*{3}}
\put(84,75){$g$}
\put(60,95){\line(-1,-1){40}}
\put(60,95){\line(1,-1){40}}
\put(60,95){\circle*{3}}
\put(58,100){$1$}
\end{picture}
\end{center}

\begin{center}
\begin{tabular}{c|ccccccccc}
$\rightarrow $ & $0$ & $a$ & $b$ & $c$ & $d$ & $e$ & $f$ & $g$ & $1$ \\ \hline
$0$ & $1$ & $1$ & $1$ & $1$ & $1$ & $1$ & $1$ & $1$ & $1$ \\

$a$ & $g$ & $1$ & $1$ & $g$ & $1$ & $1$ & $g$ & $1$ & $1$ \\
$b$ & $f$ & $g$ & $1$ & $f$ & $g$ & $1$ & $f$ & $g$ & $1$ \\

$c$ & $e$ & $e$ & $e$ & $1$ & $1$ & $1$ & $1$ & $1$ & $1$ \\
$d$ & $d$ & $e$ & $e$ & $g$ & $1$ & $1$ & $g$ & $1$ & $1$ \\
$e$ & $c$ & $d$ & $e$ & $f$ & $g$ & $1$ & $f$ & $g$ & $1$ \\

$f$ & $b$ & $b$ & $b$ & $e$ & $e$ & $e$ & $1$ & $1$ & $1$ \\

$g$ & $a$ & $b$ & $b$ & $d$ & $e$ & $e$ & $g$ & $1$ & $1$ \\
$1$ & $0$ & $a$ & $b$ & $c$ & $d$ & $e$ & $f$ & $g$ & $1$

\end{tabular}
\end{center}

\begin{center}
\begin{tabular}{c|ccccccccc}
$\odot $ & $0$ & $a$ & $b$ & $c$ & $d$ & $e$ & $f$ & $g$ & $1$ \\ \hline
$0$ & $0$ & $0$ & $0$ & $0$ & $0$ & $0$ & $0$ & $0$ & $0$ \\

$a$ & $0$ & $0$ & $a$ & $0$ & $0$ & $a$ & $0$ & $0$ & $a$ \\
$b$ & $0$ & $a$ & $b$ & $0$ & $a$ & $b$ & $0$ & $a$ & $b$ \\
$c$ & $0$ & $0$ & $0$ & $0$ & $0$ & $0$ & $c$ & $c$ & $c$ \\
$d$ & $0$ & $0$ & $a$ & $0$ & $0$ & $a$ & $c$ & $c$ & $d$ \\
$e$ & $0$ & $a$ & $b$ & $0$ & $a$ & $b$ & $c$ & $d$ & $e$ \\
$f$ & $0$ & $0$ & $0$ & $c$ & $c$ & $c$ & $f$ & $f$ & $f$ \\

$g$ & $0$ & $0$ & $a$ & $c$ & $c$ & $d$ & $f$ & $f$ & $g$ \\

$1$ & $0$ & $a$ & $b$ & $c$ & $d$ & $e$ & $f$ & $g$ & $1$
\end{tabular}

\end{center}
\end{example}

\begin{example}
\label{lrex8}
{\rm \cite[Section 15.2.1]{ior2}, \cite{ior}} Let $A=\{0,n,a,b,i,f,g,h,j,c,d,1\}$, described below. $A$ is a residuated lattice.

\begin{center}
\begin{picture}(70,210)(0,0)
\put(30,10){\line(0,1){20}}

\put(30,30){\line(1,1){20}}

\put(30,30){\line(-1,1){20}}
\put(30,70){\line(1,-1){20}}
\put(30,70){\line(-1,-1){20}}
\put(30,70){\line(0,1){80}}
\put(30,150){\line(1,1){20}}
\put(30,150){\line(-1,1){20}}
\put(30,190){\line(1,-1){20}}
\put(30,190){\line(-1,-1){20}}
\put(30,10){\circle*{3}}
\put(10,50){\circle*{3}}

\put(30,30){\circle*{3}}
\put(50,50){\circle*{3}}
\put(30,70){\circle*{3}}
\put(30,90){\circle*{3}}
\put(30,110){\circle*{3}}
\put(30,130){\circle*{3}}

\put(30,150){\circle*{3}}
\put(30,190){\circle*{3}}

\put(10,170){\circle*{3}}

\put(50,170){\circle*{3}}

\put(28,0){$0$}
\put(34,27){$n$}
\put(34,67){$i$}
\put(34,87){$f$}
\put(34,107){$g$}
\put(34,127){$h$}

\put(34,147){$j$}
\put(1,47){$a$}
\put(54,47){$b$}

\put(1,167){$c$}
\put(54,167){$d$}

\put(28,195){$1$}

\end{picture}

\end{center}

\begin{center}
\begin{tabular}{c|cccccccccccc}
$\rightarrow $ & $0$ & $n$ & $a$ & $b$ & $i$ & $f$ & $g$ & $h$ & $j$ & $c$ & $d$ & $1$ \\ \hline
$0$ & $1$ & $1$ & $1$ & $1$ & $1$ & $1$ & $1$ & $1$ & $1$ & $1$ & $1$ & $1$ \\
$n$ & $0$ & $1$ & $1$ & $1$ & $1$ & $1$ & $1$ & $1$ & $1$ & $1$ & $1$ & $1$ \\
$a$ & $0$ & $d$ & $1$ & $d$ & $1$ & $1$ & $1$ & $1$ & $1$ & $1$ & $1$ & $1$ \\
$b$ & $0$ & $c$ & $c$ & $1$ & $1$ & $1$ & $1$ & $1$ & $1$ & $1$ & $1$ & $1$ \\
$i$ & $0$ & $j$ & $c$ & $d$ & $1$ & $1$ & $1$ & $1$ & $1$ & $1$ & $1$ & $1$ \\
$f$ & $0$ & $h$ & $h$ & $h$ & $h$ & $1$ & $1$ & $1$ & $1$ & $1$ & $1$ & $1$ \\
$g$ & $0$ & $g$ & $g$ & $g$ & $g$ & $h$ & $1$ & $1$ & $1$ & $1$ & $1$ & $1$ \\
$h$ & $0$ & $f$ & $f$ & $f$ & $f$ & $h$ & $h$ & $1$ & $1$ & $1$ & $1$ & $1$ \\

$j$ & $0$ & $i$ & $i$ & $i$ & $i$ & $f$ & $g$ & $h$ & $1$ & $1$ & $1$ & $1$ \\
$c$ & $0$ & $b$ & $i$ & $b$ & $i$ & $f$ & $g$ & $h$ & $d$ & $1$ & $d$ & $1$ \\

$d$ & $0$ & $a$ & $a$ & $i$ & $i$ & $f$ & $g$ & $h$ & $c$ & $c$ & $1$ & $1$ \\

$1$ & $0$ & $n$ & $a$ & $b$ & $i$ & $f$ & $g$ & $h$ & $j$ & $c$ & $d$ & $1$ \end{tabular}
\end{center}

\begin{center}
\begin{tabular}{c|cccccccccccc}

$\odot $ & $0$ & $n$ & $a$ & $b$ & $i$ & $f$ & $g$ & $h$ & $j$ & $c$ & $d$ & $1$ \\ \hline

$0$ & $0$ & $0$ & $0$ & $0$ & $0$ & $0$ & $0$ & $0$ & $0$ & $0$ & $0$ & $0$ \\
$n$ & $0$ & $n$ & $n$ & $n$ & $n$ & $n$ & $n$ & $n$ & $n$ & $n$ & $n$ & $n$ \\

$a$ & $0$ & $n$ & $n$ & $n$ & $n$ & $n$ & $n$ & $n$ & $n$ & $a$ & $n$ & $a$ \\
$b$ & $0$ & $n$ & $n$ & $n$ & $n$ & $n$ & $n$ & $n$ & $n$ & $n$ & $b$ & $b$ \\
$i$ & $0$ & $n$ & $n$ & $n$ & $n$ & $n$ & $n$ & $n$ & $n$ & $a$ & $b$ & $i$ \\
$f$ & $0$ & $n$ & $n$ & $n$ & $n$ & $n$ & $n$ & $n$ & $f$ & $f$ & $f$ & $f$ \\

$g$ & $0$ & $n$ & $n$ & $n$ & $n$ & $n$ & $n$ & $f$ & $g$ & $g$ & $g$ & $g$ \\
$h$ & $0$ & $n$ & $n$ & $n$ & $n$ & $n$ & $f$ & $f$ & $h$ & $h$ & $h$ & $h$ \\
$j$ & $0$ & $n$ & $n$ & $n$ & $n$ & $f$ & $g$ & $h$ & $j$ & $j$ & $j$ & $j$ \\

$c$ & $0$ & $n$ & $a$ & $n$ & $a$ & $f$ & $g$ & $h$ & $j$ & $c$ & $j$ & $c$ \\
$d$ & $0$ & $n$ & $n$ & $b$ & $b$ & $f$ & $g$ & $h$ & $j$ & $j$ & $d$ & $d$ \\
$1$ & $0$ & $n$ & $a$ & $b$ & $i$ & $f$ & $g$ & $h$ & $j$ & $c$ & $d$ & $1$ \end{tabular}
\end{center}
\end{example}

For any residuated lattice $A$ and any $a,b\in A$, we denote $a\leftrightarrow b=(a\rightarrow b)\wedge (b\rightarrow a)$ \index{$\leftrightarrow $} and $\neg \, a=a\rightarrow 0$.\index{$\neg $}

Let $A$ be a residuated lattice, $a\in A$ and $n\in \N ^{*}$. We shall denote by $a^{n}$ \index{$a^{n}$} the following element of $A$: $\underbrace{a\odot \ldots \odot a}_{n\ {\rm of}\ a}$. We also denote $a^{0}=1$.

\begin{lemma}{\rm \cite{kow}, \cite{haj}, \cite{pic}, \cite{tur}} Let $A$ be a residuated lattice and $a,b,c\in A$. Then:
\begin{enumerate}
\item\label{6.(ii)} if $a\vee b=1$ then $a\odot b=a\wedge b$;
\item\label{6.(iii)} $(a\vee b)\odot (a\vee c)\leq a\vee (b\odot c)$, hence $(a\vee c)\odot (b\vee c)\leq (a\odot b)\vee c$ and, for any $n,k\in \N ^{*}$, $(a\vee b)^{nk}\leq a^{n}\vee b^{k}$;
\item\label{art5(iv)} $a\leq b$ iff $a\rightarrow b=1$, and $a=b$ iff $a\leftrightarrow b=1$.
\end{enumerate}
\label{calcul}
\end{lemma}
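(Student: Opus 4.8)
The plan is to derive everything from a few immediate consequences of the law of residuation, established first. From $a\odot b\leq a\odot b$ and residuation we get $a\leq b\rightarrow (a\odot b)$, and running the same argument with an inequality $a\leq a'$ in place of $a$ shows that $\odot$ is isotone in each argument; since $b\leq 1$ this gives $a\odot b\leq a\odot 1=a$ and, symmetrically, $a\odot b\leq b$, hence $a\odot b\leq a\wedge b$. Because $\odot$ has a residual, it also distributes over $\vee$: the inequality $(a\odot b)\vee (a\odot c)\leq a\odot (b\vee c)$ is isotonicity, and the reverse follows by checking $b,c\leq a\rightarrow ((a\odot b)\vee (a\odot c))$ and using residuation. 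Together with $1\odot a=a$ and the fact that $1$ is the top element, these are all the tools needed.

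For \ref{6.(ii)}, the inequality $a\odot b\leq a\wedge b$ is the fact just noted; for the reverse, I would write $a\wedge b=(a\wedge b)\odot 1=(a\wedge b)\odot (a\vee b)=((a\wedge b)\odot a)\vee ((a\wedge b)\odot b)$ by distributivity, and then bound $(a\wedge b)\odot a\leq b\odot a=a\odot b$ and $(a\wedge b)\odot b\leq a\odot b$ by isotonicity, so $a\wedge b\leq a\odot b$. For the first inequality in \ref{6.(iii)}, expand $(a\vee b)\odot (a\vee c)$ by distributivity into $(a\odot a)\vee (a\odot c)\vee (b\odot a)\vee (b\odot c)$, note that each of the first three summands is $\leq a$ (being below $a\odot 1=a$), and conclude $(a\vee b)\odot (a\vee c)\leq a\vee (b\odot c)$. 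The asserted consequence $(a\vee c)\odot (b\vee c)\leq (a\odot b)\vee c$ is just this same inequality with the distinguished element renamed (read it as $(x\vee y)\odot (x\vee z)\leq x\vee (y\odot z)$ and put $x=c$, $y=a$, $z=b$).

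The only step with any content is the power inequality $(a\vee b)^{nk}\leq a^{n}\vee b^{k}$, which I would obtain by a nested induction. First, by induction on $k$, one shows $(a\vee b)^{k}\leq a\vee b^{k}$: the step is $(a\vee b)^{k+1}=(a\vee b)^{k}\odot (a\vee b)\leq (a\vee b^{k})\odot (a\vee b)\leq a\vee (b^{k}\odot b)=a\vee b^{k+1}$, where the second inequality is the instance of the first inequality in \ref{6.(iii)} with $x=a$, $y=b^{k}$, $z=b$. Then, using that $x\mapsto x^{n}$ is isotone (a consequence of isotonicity of $\odot$), $(a\vee b)^{nk}=((a\vee b)^{k})^{n}\leq (a\vee b^{k})^{n}=(b^{k}\vee a)^{n}\leq b^{k}\vee a^{n}=a^{n}\vee b^{k}$, the last inequality being the already-proved fact $(u\vee v)^{n}\leq u\vee v^{n}$ with $u=b^{k}$, $v=a$. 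Finally \ref{art5(iv)} is immediate from residuation: $a\rightarrow b=1$ iff $1\leq a\rightarrow b$ (as $a\rightarrow b\leq 1$ always) iff $1\odot a\leq b$ iff $a\leq b$; and since $1$ is the top element, $(a\rightarrow b)\wedge (b\rightarrow a)=1$ iff $a\rightarrow b=1$ and $b\rightarrow a=1$, i.e. iff $a\leq b$ and $b\leq a$, i.e. iff $a=b$. I anticipate no real obstacle; the only point requiring care is organizing the induction for the power inequality so that isotonicity of $\odot$ (hence of the $n$-th power map) is invoked to pass from $(a\vee b)^{k}\leq a\vee b^{k}$ to the bound on $((a\vee b)^{k})^{n}$.
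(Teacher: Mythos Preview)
Your proof is correct and complete. The paper itself provides no proof of this lemma---it is stated with citations to \cite{kow}, \cite{haj}, \cite{pic}, \cite{tur} as a known result from the literature---so there is no argument in the paper to compare your approach against; your self-contained derivation from the law of residuation (isotonicity of $\odot$, the bound $a\odot b\leq a\wedge b$, distributivity of $\odot$ over $\vee$, and the nested induction for the power inequality) is a clean and standard way to supply the missing details.
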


\begin{definition}
Let $L$ be a distributive lattice with $0$. An element $l$ of $L$ is said to be {\em pseudocomplemented}\index{element!pseudocomplemented} iff there exists a greatest element $m$ of $L$ which satisfies: $l\wedge m=0$; such an element $m$ is denoted $l^{*}$\index{$l^{*}$} and called the {\em pseudocomplement of $l$}.\index{pseudocomplement} $L$ is said to be {\em pseudocomplemented}\index{lattice!pseudocomplemented} iff all its elements are pseudocomplemented.
\end{definition}

We recall that, if it exists, the complement of an element of a bounded lattice coincides with its pseudocomplement.\index{pseudocomplement}

\begin{definition}
Let $L$ be a lattice. A nonempty subset $F$ of $L$ is called a {\em filter of $L$} \index{filter!of a lattice} iff it satisfies the following conditions:

\noindent (i) for all $l,m\in F$, $l\wedge m\in F$;

\noindent (ii) for all $l\in F$ and all $m\in L$, if $l\leq m$ then $m\in F$.

The set of all filters of $L$ is denoted ${\cal{F}}(L)$.\index{${\cal{F}}(L)$}

A filter $F$ of $L$ is said to be {\em proper} iff $F\neq L$.

A proper filter $P$ of $L$ is called a {\em prime filter} \index{filter!prime} iff, for all $l,m\in L$, if $l\vee m\in P$, then $l\in P$ or $m\in P$. The set of all prime filters of $L$ is called {\em the (prime) spectrum of $L$}.
\label{`filterl`}
\end{definition}

\begin{definition}
Let $A$ be a residuated lattice. A nonempty subset $F$ of $A$ is called a {\em filter of $A$} \index{filter!of a residuated lattice} iff it satisfies the following conditions:

\noindent (i) for all $a,b\in F$, $a\odot b\in F$;

\noindent (ii) for all $a\in F$ and all $b\in A$, if $a\leq b$ then $b\in F$.

The set of all filters of $A$ is denoted ${\cal{F}}(A)$.\index{${\cal{F}}(A)$}

A filter $F$ of $A$ is said to be {\em proper} iff $F\neq A$.

A proper filter $P$ of $A$ is called a {\em prime filter} \index{filter!prime} iff, for all $a,b\in A$, if $a\vee b\in P$, then $a\in P$ or $b\in P$. The set of all prime filters of $A$ is called {\em the (prime) spectrum of $A$}.
\label{`filtera`}
\end{definition}

\begin{remark}{\rm \cite{eu1}} Let $A$ be a residuated lattice, $F$ a filter of $A$ and $a,b\in A$. Then: $a\odot b\in F$ iff $a\wedge b\in F$ iff $a,b\in F$.
\label{inF}
\end{remark}
\begin{proof}
By Definition \ref{`filtera`}.\end{proof}

For all elements $x$ and all subsets $X$ of a lattice or residuated lattice $A$, we denote by $<x>$ the principal filter of $A$ generated by $x$ and by $<X>$ the filter of $A$ generated by $X$.

\begin{lemma}{\rm \cite{eu1}} Let $A$ be a residuated lattice and $a\in A$. Then $<a>=\{b\in A|(\exists \, n\in \N ^{*})\, a^{n}\leq b\}$.\index{filter!principal!of a residuated lattice}\index{$<a>$}
\label{principal}
\end{lemma}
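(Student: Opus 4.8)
The plan is to prove both inclusions of the claimed equality $<a>=\{b\in A\mid(\exists\, n\in\N^{*})\ a^{n}\leq b\}$, after first checking that the right-hand set is indeed a filter containing $a$. Write $S=\{b\in A\mid(\exists\, n\in\N^{*})\ a^{n}\leq b\}$. For the ``$\supseteq$'' direction I would argue that $<a>$, being a filter containing $a$, is closed under $\odot$, so $a^{n}=a\odot\cdots\odot a\in <a>$ for every $n\in\N^{*}$; then by the upward-closure condition (ii) of Definition \ref{`filtera`}, any $b\geq a^{n}$ lies in $<a>$. Hence $S\subseteq <a>$. Note $a=a^{1}\in S$, so $a\in S$.

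For the ``$\subseteq$'' direction it suffices, since $<a>$ is the least filter containing $a$, to show that $S$ is a filter of $A$ and $a\in S$ (the latter already observed). So I would verify the two filter axioms for $S$. For axiom (i): if $b,c\in S$, pick $n,k\in\N^{*}$ with $a^{n}\leq b$ and $a^{k}\leq c$; since $\odot$ is isotone in both arguments (an immediate consequence of residuation — if $x\leq y$ then from $z\leq z$ and the law of residuation one gets $z\odot x\leq z\odot y$), we obtain $a^{n+k}=a^{n}\odot a^{k}\leq b\odot c$, so $b\odot c\in S$ with witness $n+k$. For axiom (ii): if $b\in S$ with $a^{n}\leq b$ and $b\leq c$, then $a^{n}\leq c$ by transitivity, so $c\in S$. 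Also $S$ is nonempty since $1\in S$ (indeed $a^{1}=a\leq 1$, or $a^{0}=1\leq 1$). Therefore $S$ is a filter containing $a$, whence $<a>\subseteq S$, and combined with the reverse inclusion this gives $<a>=S$.

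The only mildly non-routine point is the monotonicity of $\odot$, which is not stated as a numbered item in the excerpt's Lemma \ref{calcul}; I would either derive it in one line from the law of residuation (from $b\leq b$ we get $b\rightarrow b=1\geq a^{k}\leftrightarrow\cdots$, more simply: $b\odot a^{k}\leq b\odot a^{k}$ iff $b\leq a^{k}\rightarrow(b\odot a^{k})$, and then $a^{n}\leq b$ yields $a^{n}\leq a^{k}\rightarrow(b\odot a^{k})$, i.e. $a^{n}\odot a^{k}\leq b\odot a^{k}\leq c\odot a^{k}$ by the symmetric argument) or simply invoke it as a well-known basic property of residuated lattices. I expect this to be the main — and rather small — obstacle; everything else is a direct application of the definitions of filter and of the power notation $a^{n}$, together with transitivity of $\leq$.
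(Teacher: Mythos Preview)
Your argument is correct and is the standard one: verify that $S=\{b\in A\mid(\exists\,n\in\N^{*})\ a^{n}\leq b\}$ is a filter containing $a$, giving $\langle a\rangle\subseteq S$, and use closure of $\langle a\rangle$ under $\odot$ and upward closure to get $S\subseteq\langle a\rangle$. The monotonicity of $\odot$ that you flag is indeed a one-line consequence of residuation (from $x\leq y$ and $y\odot z\leq y\odot z$ one gets $y\leq z\rightarrow(y\odot z)$, hence $x\leq z\rightarrow(y\odot z)$, hence $x\odot z\leq y\odot z$), so there is no real obstacle.

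As for comparison with the paper: there is nothing to compare. The paper does not prove this lemma at all; it merely quotes it from \cite{eu1} and uses it. Your write-up is exactly the routine verification one would expect in that reference.
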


\begin{definition}
Let $L$ be a lattice and $F$ a filter of $L$. For all $l,m\in L$, we denote $l\equiv m\ ({\rm mod}\ F)$ and say that {\em $l$ and $m$ are congruent modulo $F$} iff there exists an element $e\in F$ such that $l\wedge e=m\wedge e$. Obviously, $\equiv \ ({\rm mod}\ F)$ is a congruence relation on $L$. The quotient lattice\index{lattice!quotient} with respect to the congruence relation $\equiv \ ({\rm mod}\ F)$ is denoted $L/F$\index{$L/F$} and its elements are denoted $l/F$,\index{$l/F$} $l\in L$.
\end{definition}

\begin{definition}
Let $A$ be a residuated lattice and $F$ a filter of $A$. For all $a,b\in A$, we denote $a\equiv b\ ({\rm mod}\ F)$ and say that {\em $a$ and $b$ are congruent modulo $F$} iff $a\leftrightarrow b\in F$. Obviously, $\equiv \ ({\rm mod}\ F)$ is a congruence relation on $A$. Residuated lattices form an equational class, which ensures us that the quotient set with respect to the congruence relation $\equiv ({\rm mod}\ F)$ is a residuated lattice.\index{residuated lattice!quotient} It is denoted $A/F$\index{$A/F$} and its elements are denoted $a/F$,\index{$a/F$} $a\in A$.
\end{definition}

\blem \label{prop-cong} {\rm \cite{haj}} Let $F$ be a filter of $A$ and $a,b\in A$. Then:
\be
\item\label{aF=0Fsau1F} $a/F=1/F$ iff $a\in F$;
\item\label{aF-leq-bF} $a/F\leq b/F$ iff $a\ra b\in F$; consequently, if $a\leq b$ then $a/F\leq b/F$.
\ee
\elem

\begin{notation}
Let $A$ be a lattice (residuated lattice). For all filters $F$, $G$ of $A$, we denote $<F\cup G>$ by $F\vee G$. More generally, for any family $\{F_{t}|t\in T\}$ of filters of $A$, we denote $<\displaystyle \bigcup _{t\in T}F_{t}>$ by $\displaystyle \bigvee _{t\in T}F_{t}$.\index{$\vee $}
\label{`veefamfiltre`}
\end{notation}

\begin{proposition}{\rm \cite{cre}} Let $A$ be a lattice (residuated lattice). Then $({\cal{F}}(A),$\linebreak $\vee ,\cap ,\{1\},A)$\index{${\cal{F}}(A)$}\index{${\cal{F}}(L)$} \index{lattice!of filters} is a complete distributive lattice, whose order relation is $\subseteq $.
\label{latoffilt}
\end{proposition}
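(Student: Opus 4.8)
The plan is to recognise $\mathcal{F}(A)$ as a closure system and then to establish the distributive law by hand. First I would check that an arbitrary intersection of filters of $A$ is again a filter: it is nonempty, since every filter contains the greatest element $1$ (by nonemptiness together with upward closure), and the two conditions (i), (ii) defining a filter are visibly inherited by intersections. Hence $\mathcal{F}(A)$, ordered by $\subseteq$, is a complete lattice, with arbitrary meets given by $\bigcap_{t\in T}F_{t}$, arbitrary joins by $<\bigcup_{t\in T}F_{t}>$, least element the least filter $\{1\}$ and greatest element $A$; in particular binary meets are $F\cap G$, binary joins are $F\vee G=<F\cup G>$, and the order induced by these operations is $\subseteq$. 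All of this is routine, so the substance of the statement is distributivity.

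For distributivity I would first replace the generated filter $<F\cup G>$ by an explicit description. Since every filter contains $1$ and is closed under $\wedge$, one verifies, for $L$ a lattice, that $F\vee G=\{x\in L : f\wedge g\leq x \text{ for some }f\in F,\ g\in G\}$: the right-hand side is a filter, it contains $F\cup G$ (take $g=1$ or $f=1$), and it is contained in every filter that contains $F\cup G$. In the residuated case, using that filters are closed under $\odot$ (equivalently, by Remark~\ref{inF}, under $\wedge$), the same argument gives $F\vee G=\{x\in A : f\odot g\leq x \text{ for some }f\in F,\ g\in G\}$. From either description the inclusion $(F\cap G)\vee(F\cap H)\subseteq F\cap(G\vee H)$ is automatic, so only $F\cap(G\vee H)\subseteq(F\cap G)\vee(F\cap H)$ needs work. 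Take $x\in F\cap(G\vee H)$ and choose $g\in G$, $h\in H$ with $g\wedge h\leq x$ (resp.\ $g\odot h\leq x$); the key move is to set $p:=g\vee x$ and $q:=h\vee x$. Then $p,q\geq x\in F$ give $p,q\in F$, while $p\geq g$ and $q\geq h$ give $p\in G$ and $q\in H$, so $p\in F\cap G$ and $q\in F\cap H$. It therefore suffices to prove $p\wedge q\leq x$ (resp.\ $p\odot q\leq x$), for then $x$ belongs to $(F\cap G)\vee(F\cap H)$ by the explicit formula.

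This last inequality is the only delicate point, and it is exactly where the two cases diverge. In the lattice case it is nothing but the distributive law of $L$: $p\wedge q=(g\vee x)\wedge(h\vee x)=(g\wedge h)\vee x\leq x$, using $g\wedge h\leq x$; this is the step that forces us to take $L$ distributive, as indeed we must. In the residuated case the underlying lattice of $A$ need not be distributive, so this computation is not available; instead I would invoke part~(\ref{6.(iii)}) of Lemma~\ref{calcul}, which gives precisely $p\odot q=(g\vee x)\odot(h\vee x)\leq(g\odot h)\vee x\leq x$. Thus the residuation structure furnishes exactly the inequality that distributivity of the lattice would otherwise provide, and both cases close in the same way. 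The same substitution $p=g\vee x$, $q=h\vee x$ moreover extends verbatim to a finite meet (resp.\ product) of witnesses, so the argument in fact yields the infinite distributive law $F\cap\bigvee_{t\in T}G_{t}=\bigvee_{t\in T}(F\cap G_{t})$, although only the finite form is needed for the statement as phrased.
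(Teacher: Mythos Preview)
The paper does not prove this proposition at all: it is simply quoted from \cite{cre} with no argument given, so there is no ``paper's own proof'' to compare against. Your proposal therefore supplies what the paper omits, and it is correct. The closure-system observation for completeness is standard, your explicit descriptions of $F\vee G$ are the right ones, and the key distributivity step via $p=g\vee x$, $q=h\vee x$ together with Lemma~\ref{calcul}, (\ref{6.(iii)}) in the residuated case is exactly the intended manoeuvre; the induction to finitely many factors and hence to the infinite distributive law also goes through as you indicate.

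One point worth flagging: you are right that in the pure lattice case the argument requires the underlying lattice to be distributive, and indeed the conclusion fails otherwise (for $M_{3}$ the filter lattice is again $M_{3}$). The proposition as stated in the paper says only ``lattice'', but in the context of this paper the lattices in play are always bounded distributive lattices (objects of ${\cal D}01$, in particular reticulations), so the hypothesis is tacitly present even if not written. Your parenthetical ``as indeed we must'' is therefore a genuine clarification of the statement rather than a gap in your proof.
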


If $A$ is a bounded lattice or a residuated lattice, then the set of the complemented elements of $A$ is called the {\em Boolean center of $A$} and is denoted by $B(A)$. It is known that, for $A$ a bounded distributive lattice or a residuated lattice, this subset of $A$ is a Boolean algebra with the operations induced by those of $A$.

\begin{lemma}{\rm \cite{BusPic06},\cite{gal},\cite{kuh},\cite{eu3}} Let $A$ be a residuated lattice. Then, for every $e,f\in B(A)$ and $a\in A$, we have:

\begin{enumerate}
\item\label{e-idempotent} $e\odot e=e$;
\item\label{B(A)=} $a\in B(A)$ iff $a\vee \neg \, a=1$;
\item\label{evx} $\neg \, e\rightarrow a=e\vee a$;
\item\label{BA-e-f} $e\odot f=e\si f\in B(A)$, $e\ra f=\neg\, e\sau f\in B(A)$ and $e\leftrightarrow f=(e\ra f)\si (f\ra e)\in B(A)$.
\end{enumerate}
\label{bcomut}
\end{lemma}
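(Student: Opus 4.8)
The plan is to verify each of the four assertions using the residuation law, basic monoid and lattice identities, and the characterization in item \ref{B(A)=}, namely that $a\in B(A)$ iff $a\sau\neg\,a=1$. First I would establish item \ref{e-idempotent}: given $e\in B(A)$ with complement $\neg\,e$, we have $e\si\neg\,e=0$ and $e\sau\neg\,e=1$. From $e\si\neg\,e=0$ and the residuation law one gets $e\leq\neg\,e\ra 0=\neg\neg\,e$, and more usefully $e\odot e\leq e$ always holds (since $e\leq 1$ gives $e\odot e\leq e\odot 1=e$); for the reverse inequality I would use Lemma \ref{calcul}(\ref{6.(ii)}): since $e\sau\neg\,e=1$, complementation-type reasoning and distributivity give that multiplying $e$ by the identity $1=e\sau\neg\,e$ yields $e=e\odot 1=e\odot(e\sau\neg\,e)$; combined with $e\odot\neg\,e\leq e\si\neg\,e=0$ (again Lemma \ref{calcul}(\ref{6.(ii)}) applied to $e\sau\neg\,e=1$, or directly from residuation) and the fact that $\odot$ distributes over $\sau$, this forces $e=e\odot e$.

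Next, for item \ref{evx}, the key is the residuation law together with item \ref{e-idempotent} (or \ref{B(A)=}). One inequality, $e\sau a\leq\neg\,e\ra a$, follows because $a\leq\neg\,e\ra a$ (from $\neg\,e\odot a\leq a$) and $e\leq\neg\,e\ra a$ (from $\neg\,e\odot e\leq\neg\,e\si e=0\leq a$, using Lemma \ref{calcul}(\ref{6.(ii)}) since $e\sau\neg\,e=1$). For the converse, I would take $x=\neg\,e\ra a$, so $\neg\,e\odot x\leq a$, and then use $x=x\odot 1=x\odot(e\sau\neg\,e)=(x\odot e)\sau(x\odot\neg\,e)\leq e\sau a$, since $x\odot e\leq e$ and $x\odot\neg\,e\leq a$. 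Then item \ref{BA-e-f} is assembled from the previous parts: $e\odot f=e\si f$ is Lemma \ref{calcul}(\ref{6.(ii)}) once we know $e\sau f\leq\ldots$—actually more directly, $e\odot f\leq e\si f$ always, and $e\si f\leq e\odot f$ follows because $e\si f$ is idempotent-adjacent; cleaner is to note $e\sau f\in B(A)$ reduces to checking $(e\si f)\sau\neg(e\si f)=1$ where $\neg(e\si f)=\neg\,e\sau\neg\,f$ by De Morgan in the Boolean algebra $B(A)$. For $e\ra f=\neg\,e\sau f$, substitute $a=f$ in item \ref{evx}, and note $\neg\,e\sau f\in B(A)$ because $B(A)$ is closed under $\sau$ and $\neg$. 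Finally $e\leftrightarrow f=(e\ra f)\si(f\ra e)\in B(A)$ is immediate from the definition of $\leftrightarrow$ and closure of $B(A)$ under $\si$.

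The main obstacle I anticipate is being careful about which facts are genuinely available without circularity: items \ref{e-idempotent}, \ref{evx}, \ref{BA-e-f} are stated together, so I want to prove \ref{e-idempotent} using only the residuation law, Lemma \ref{calcul} (especially part \ref{6.(ii)}), and the definition of complement, then prove \ref{evx} using \ref{e-idempotent}, then deduce \ref{BA-e-f} from \ref{evx} plus the known fact (stated in the paragraph preceding the lemma) that $B(A)$ is a Boolean algebra under the induced operations. The only subtlety is the De Morgan step and the claim that $B(A)$ is closed under $\si$, $\sau$, $\neg$ — but this is exactly the content of "$B(A)$ is a Boolean algebra with the operations induced by those of $A$," which we are permitted to assume. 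Everything else is a short computation chaining $a\odot b\leq c\Leftrightarrow a\leq b\ra c$ with the distributivity of $\odot$ over $\sau$ and the identities $e\si\neg\,e=0$, $e\sau\neg\,e=1$.
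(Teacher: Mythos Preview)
The paper does not supply its own proof of this lemma: it is stated with citations to \cite{BusPic06}, \cite{gal}, \cite{kuh}, \cite{eu3} and no argument follows. So there is nothing to compare your approach against; I can only comment on correctness.

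Your overall strategy is sound and the computations for items \ref{e-idempotent} and \ref{evx} are correct, relying exactly on residuation, distributivity of $\odot$ over $\sau$, and Lemma~\ref{calcul}(\ref{6.(ii)}). Two points need tightening. First, you never actually prove item \ref{B(A)=}; you announce you will use it, but it is one of the four assertions to be established. The argument is short: if $a\sau\neg\,a=1$ then by Lemma~\ref{calcul}(\ref{6.(ii)}) $a\si\neg\,a=a\odot\neg\,a\leq 0$, so $\neg\,a$ is a complement of $a$; conversely if $b$ is a complement of $a$ then $a\odot b=a\si b=0$ gives $b\leq\neg\,a$, whence $1=a\sau b\leq a\sau\neg\,a$. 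Second, in item \ref{BA-e-f} your derivation of $e\ra f=\neg\,e\sau f$ is not quite right: substituting $a=f$ in item \ref{evx} yields $\neg\,e\ra f=e\sau f$, not the desired identity. What you need is to apply item \ref{evx} with $\neg\,e$ in place of $e$ (legitimate since $\neg\,e\in B(A)$), obtaining $\neg\neg\,e\ra f=\neg\,e\sau f$, and then invoke $\neg\neg\,e=e$ for $e\in B(A)$, which follows from uniqueness of complements (provable via $b=b\odot(a\sau c)=b\odot c=c\odot(a\sau b)=c$ for any two complements $b,c$ of $a$). The remark about $e\odot f=e\si f$ being ``idempotent-adjacent'' is too vague; a clean route is: $e\si f\in B(A)$ by the cited closure of $B(A)$, hence $(e\si f)\odot(e\si f)=e\si f$ by item \ref{e-idempotent}, and since $e\si f\leq e$ and $e\si f\leq f$ this gives $e\si f\leq e\odot f$, the reverse inequality being trivial.
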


\begin{proposition}{\rm \cite{cre}} Let $A$ be a distributive lattice or a residuated lattice. Then, for all $a,b\in A$: $<a>\cap <b>=<a\vee b>$.\index{filter!principal}
\label{`sisau`}
\end{proposition}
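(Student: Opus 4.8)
The plan is to prove the two inclusions separately, using the explicit descriptions of principal filters. One inclusion can be obtained uniformly for lattices and residuated lattices straight from the filter axioms, while for the reverse inclusion the lattice case is immediate and the residuated lattice case needs a short computation with powers based on Lemma \ref{calcul}.

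For the inclusion $<a\vee b>\se <a>\cap <b>$: since $a\leq a\vee b$ and $a\in <a>$, the upward closure condition in the definition of a filter (Definition \ref{`filtera`}, respectively Definition \ref{`filterl`}) yields $a\vee b\in <a>$, and symmetrically $a\vee b\in <b>$; hence $a\vee b\in <a>\cap <b>$. By Proposition \ref{latoffilt}, $<a>\cap <b>$ is again a filter of $A$, so it contains the filter generated by $a\vee b$, that is $<a\vee b>\se <a>\cap <b>$.

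For the reverse inclusion $<a>\cap <b>\se <a\vee b>$: if $A$ is a lattice, then $<x>=\{y\in A\mid y\geq x\}$ for every $x\in A$, so any $x\in <a>\cap <b>$ satisfies $x\geq a$ and $x\geq b$, hence $x\geq a\vee b$, hence $x\in <a\vee b>$ (distributivity is not needed here). If $A$ is a residuated lattice, let $x\in <a>\cap <b>$; by Lemma \ref{principal} there exist $n,m\in \N ^{*}$ with $a^{n}\leq x$ and $b^{m}\leq x$, so $a^{n}\vee b^{m}\leq x$, and by Lemma \ref{calcul}, item \ref{6.(iii)}, $(a\vee b)^{nm}\leq a^{n}\vee b^{m}\leq x$, so $x\in <a\vee b>$ again by Lemma \ref{principal}. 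Combining the two inclusions gives $<a>\cap <b>=<a\vee b>$.

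I do not anticipate a genuine obstacle: the argument is short, and the only point requiring care is, in the residuated lattice case, the passage to the common exponent $nm$ together with the inequality $(a\vee b)^{nm}\leq a^{n}\vee b^{m}$ supplied by Lemma \ref{calcul}; everything else is a routine use of the filter axioms and of the description of principal filters.
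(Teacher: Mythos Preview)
Your argument is correct. The paper itself does not supply a proof of this proposition; it is simply quoted from \cite{cre}, so there is no ``paper's own proof'' to compare against. Your route is the expected one: one inclusion follows from upward closure of filters and the fact that the intersection of filters is a filter, and the other uses the explicit description of principal filters together with, in the residuated case, the inequality $(a\vee b)^{nm}\leq a^{n}\vee b^{m}$ from Lemma~\ref{calcul}\,(\ref{6.(iii)}). Your parenthetical remark that distributivity is not used is also accurate; the hypothesis is presumably inherited from the source \cite{cre} rather than being essential to this particular identity.
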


Let $A$ be a bounded distributive lattice or a residuated lattice; the definitions we are about to give are valid for both types of structures. For any non-empty subset $X$ of $A$, the {\em co-annihilator of $X$}\index{co-annihilator} is the set $X^{\top }=\{a\in A|(\forall x\in X)a\vee x=1\}$.\index{$X^{\top }$} In the case when $X$ consists of a single element $x$, we denote the co-annihilator of $X$ by $x^{\top }$\index{$x^{\top }$} and call it the {\em co-annihilator of $x$}. Also, we will denote $X^{\top \top}=(X^{\top })^{\top }$\index{$X^{\top \top}$} and $x^{\top \top}=(x^{\top })^{\top }$.\index{$x^{\top \top}$}

Notice that, for all bounded distributive lattices or residuated lattices $A$ and for all non-empty subsets $X$, $Y$ of $A$, if $X\subseteq Y$ then $Y^{\top }\subseteq X^{\top }$.

\begin{proposition}
Let $A$ be a bounded distributive lattice or a residuated lattice. Then, for any $X\subseteq A$, $X^{\top }$ is a filter of $A$.\index{$X^{\top }$}\index{co-annihilator}
\label{topfiltru}
\end{proposition}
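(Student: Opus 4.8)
The plan is to verify directly the defining conditions of a filter from Definition \ref{`filterl`} / Definition \ref{`filtera`} — nonemptiness, closure under $\wedge$ (in the lattice case) respectively under $\odot$ (in the residuated case), and upward closure — running both cases in parallel, since they will differ only in the closure step. Recall that $X^{\top }=\{a\in A\mid (\forall x\in X)\, a\vee x=1\}$.

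First, $1\in X^{\top }$, because $1\vee x=1$ for every $x\in X$; hence $X^{\top }\neq \emptyset$. (If one wishes to allow $X=\emptyset $, then $X^{\top }=A$, which is trivially a filter, so we may as well assume $X$ nonempty, in accordance with the convention of the definition.) For upward closure, let $a\in X^{\top }$ and $a\leq b$; then for each $x\in X$ the monotonicity of $\vee $ gives $1=a\vee x\leq b\vee x\leq 1$, so $b\vee x=1$, and thus $b\in X^{\top }$.

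For closure under the monoid (resp. meet) operation, let $a,b\in X^{\top }$ and fix $x\in X$, so that $a\vee x=b\vee x=1$. In the bounded distributive lattice case, distributivity yields $(a\wedge b)\vee x=(a\vee x)\wedge (b\vee x)=1\wedge 1=1$, whence $a\wedge b\in X^{\top }$. In the residuated lattice case, Lemma \ref{calcul}(\ref{6.(iii)}) gives $1=1\odot 1=(a\vee x)\odot (b\vee x)\leq (a\odot b)\vee x$, so $(a\odot b)\vee x=1$, and therefore $a\odot b\in X^{\top }$. This is the only step where the two cases genuinely diverge, and the only place where lattice distributivity (for lattices) or the inequality $(a\vee c)\odot (b\vee c)\leq (a\odot b)\vee c$ from Lemma \ref{calcul}(\ref{6.(iii)}) (for residuated lattices) is actually used; everything else is purely formal, so no serious obstacle is anticipated.
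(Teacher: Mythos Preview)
Your proof is correct and follows exactly the approach indicated by the paper, which simply cites Lemma~\ref{calcul}(\ref{6.(iii)}) for the residuated case and distributivity for the lattice case; you have merely spelled out the nonemptiness and upward-closure steps that the paper leaves implicit.
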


\begin{proof}

By Remark \ref{calcul}, (\ref{6.(iii)}), for $A$ a residuated lattice and by the distributivity for $A$ a bounded distributive lattice.\end{proof}

\begin{definition}
Let $A$ be a bounded distributive lattice or a residuated lattice. Then $A$ is said to be {\em co-Stone}\index{lattice!co-Stone}\index{residuated lattice!co-Stone} (respectively {\em strongly co-Stone})\index{lattice!strongly co-Stone}\index{residuated lattice!strongly co-Stone}  iff, for all $x\in A$ (respectively all $X\subseteq A$), there exists an element $e\in B(A)$ such that $x^{\top }=<e>$ (respectively $X^{\top }=<e>$).
\label{co-Stone}
\end{definition}

Obviously, any complete co-Stone lattice (residuated lattice) is strongly co-Stone, as is shown by Proposition \ref{`sisau`} and the fact that, with the notations in the previous definition, $\displaystyle X^{\top }=\bigcap _{x\in X}x^{\top }$.

We have chosen the previous definition of co-Stone residuated lattices over the definition from \cite{rcig} for a reason that is explained by Remark \ref{nucdefco-Stone}. In \cite{rcig}, the author defines a Stone residuated lattice to be a residuated lattice $A$ that satisfies the equation: $\neg \, a\vee \neg \, \neg \, a=1$ for all $a\in A$.

For any bounded distributive lattice or residuated lattice $A$, we shall denote ${\rm CoAnn}(A)=\{X^{\top }|X\subseteq A\}$\index{${\rm CoAnn}(A)$} and, for all $F,G\in {\rm CoAnn}(A)$, we shall denote $F\vee ^{\top }G=(F^{\top }\cap G^{\top })^{\top }$.\index{$\vee ^{\top }$} More generally, for all $\{F_{t}|t\in T\}\subseteq {\rm CoAnn}(A)$, we denote $\displaystyle \bigvee _{t\in T}{}^{\top }F_{t}=\left( \bigcap _{t\in T}F_{t}^{\top }\right) ^{\top }$.

\begin{proposition}
Let $A$ be a bounded distributive lattice or a residuated lattice. Then $({\rm CoAnn}(A),\vee ^{\top },\cap ,^{\top },\{1\},A)$\index{${\rm CoAnn}(A)$} is a complete Boolean algebra.
\label{CoAnnbool}
\end{proposition}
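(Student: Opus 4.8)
The plan is to show that $({\rm CoAnn}(A),\vee^{\top},\cap,{}^{\top},\{1\},A)$ is isomorphic (or at least structurally identical) to the standard Boolean algebra of ``regular'' or ``closed'' elements obtained from the Galois connection induced by the operation $X\mapsto X^{\top}$. First I would record that $X\mapsto X^{\top}$ is an antitone Galois connection on the powerset of $A$: for all nonempty $X,Y\subseteq A$ we have $X\subseteq Y^{\top}$ iff $Y\subseteq X^{\top}$ (both say that $x\vee y=1$ for all $x\in X$, $y\in Y$), using Lemma~\ref{calcul}(\ref{6.(iii)}) in the residuated case and distributivity in the lattice case as already invoked in the proof of Proposition~\ref{topfiltru}; one must also handle the degenerate case $X=\emptyset$ or the value taken on elements with empty co-annihilator, by convention taking $\emptyset^{\top}=A$ and noting $A^{\top}=\{1\}$, $\{1\}^{\top}=A$. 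From the Galois connection one gets the usual formal consequences: $X\subseteq X^{\top\top}$, $X^{\top}=X^{\top\top\top}$, and $X\subseteq Y\Rightarrow Y^{\top}\subseteq X^{\top}$ (the last already noted in the excerpt). Hence the elements of ${\rm CoAnn}(A)$ are exactly the ``closed'' subsets, i.e.\ those $F$ with $F=F^{\top\top}$, and ${}^{\top}$ restricted to ${\rm CoAnn}(A)$ is an order-reversing involution.

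Next I would verify that $({\rm CoAnn}(A),\subseteq)$ is a complete lattice and identify its operations. Since each $F\in{\rm CoAnn}(A)$ is a filter (Proposition~\ref{topfiltru}) and $F=F^{\top\top}$, closure under arbitrary intersections follows from the general Galois-connection fact that an intersection of closed sets is closed: $\bigcap_{t}F_{t}=\bigcap_{t}F_{t}^{\top\top}=(\bigcup_{t}F_{t}^{\top})^{\top}$, which is in ${\rm CoAnn}(A)$. Thus $\cap$ is the meet in ${\rm CoAnn}(A)$, with top element $A=\emptyset^{\top}$ (the whole structure, since $a\vee\text{anything}$ need not be $1$, so $A^{\top}=\{1\}$ and $\{1\}^{\top}=A$ — I should double-check this convention is what makes $A$ the top and $\{1\}$ the bottom). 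The join is then forced: $\bigvee_{t}^{\top}F_{t}=(\bigcap_{t}F_{t}^{\top})^{\top}$ is the least closed set containing all $F_{t}$, which is exactly the definition given in the excerpt. So $({\rm CoAnn}(A),\vee^{\top},\cap)$ is a complete lattice.

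It then remains to check the Boolean axioms: that the lattice is distributive and that ${}^{\top}$ is a complementation, i.e.\ $F\cap F^{\top}=\{1\}$ and $F\vee^{\top}F^{\top}=A$ for every $F\in{\rm CoAnn}(A)$. For $F\cap F^{\top}=\{1\}$: if $a\in F\cap F^{\top}$ then taking $x=a$ in the definition of $F^{\top}$ gives $a\vee a=1$, so $a=1$ (and $1$ is always in any filter). For $F\vee^{\top}F^{\top}=(F^{\top}\cap F^{\top\top})^{\top}=(F^{\top}\cap F^{\top})^{\top}$ — wait, more carefully $F\vee^{\top}F^{\top}=(F^{\top}\cap(F^{\top})^{\top})^{\top}=(F^{\top}\cap F^{\top\top})^{\top}$, and since $F^{\top}\cap F^{\top\top}=\{1\}$ by the previous point applied to $F^{\top}$, this equals $\{1\}^{\top}=A$. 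Distributivity is the step I expect to be the main obstacle: the complete lattice ${\rm CoAnn}(A)$ need not be a sublattice of ${\cal F}(A)$ (the join $\vee^{\top}$ differs from the filter join $\vee$), so distributivity does not come for free from Proposition~\ref{latoffilt}. The cleanest route is to prove the de~Morgan laws $(F\cap G)^{\top}=F^{\top}\vee^{\top}G^{\top}$ and $(F\vee^{\top}G)^{\top}=F^{\top}\cap G^{\top}$ (the second is immediate from $F^{\top\top}=F$; the first follows by applying ${}^{\top}$ to it and using involutivity), and then combine the de~Morgan laws with the complementation property $F\cap F^{\top}=\{1\}$, $F\vee^{\top}F^{\top}=A$ to force distributivity — a complemented lattice satisfying de~Morgan's laws via an order-reversing involution is automatically distributive. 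Alternatively one can verify the absorption/distribution identities directly using the concrete description of the operations in terms of $\vee=1$ relations. I would carry out the de~Morgan argument, since it also cleanly establishes the Boolean structure in one package.
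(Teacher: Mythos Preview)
The paper itself does not prove this proposition; it simply cites \cite{leo} and remarks that the argument there carries over. Your outline therefore goes well beyond what the paper supplies, and your Galois-connection framework is the right one: the symmetry $X\subseteq Y^{\top}\Leftrightarrow Y\subseteq X^{\top}$, the identification of ${\rm CoAnn}(A)$ with the $\top\top$-closed sets, closure under arbitrary intersections, and the verification that $F\cap F^{\top}=\{1\}$ and $F\vee^{\top}F^{\top}=A$ are all correct and cleanly argued.

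There is, however, a genuine gap in your distributivity step. The assertion that ``a complemented lattice satisfying de~Morgan's laws via an order-reversing involution is automatically distributive'' is false: what you have described is precisely an \emph{ortholattice}, and ortholattices need not be distributive (the hexagon $MO_{2}$, with four incomparable atoms paired into complements, is the standard counterexample). So de~Morgan plus complementation alone will not finish the proof.

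What rescues the argument is a property of ${}^{\top}$ you have not yet recorded: it is the \emph{pseudocomplement} in ${\rm CoAnn}(A)$, not merely an orthocomplement. Indeed, for filters $F,G$ one has $F\cap G=\{1\}$ iff $G\subseteq F^{\top}$: the forward direction holds because for $f\in F$, $g\in G$ the element $f\vee g$ lies in both filters, hence equals $1$; the reverse is your computation $a\vee a=1$. Once you know the orthocomplement is a pseudocomplement, distributivity follows. One route is the Glivenko--Frink theorem that the skeleton $\{a^{*}:a\in L\}$ of any pseudocomplemented (meet-semi)lattice is a Boolean algebra under the inherited meet and the join $(a^{*}\wedge b^{*})^{*}$; here every element of ${\rm CoAnn}(A)$ is already skeletal. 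Alternatively, you can exclude $M_{3}$ and $N_{5}$ directly: for $N_{5}=\{0,a,b,c,1\}$ with $a<b$, $a\wedge c=b\wedge c=0$, $a\vee c=b\vee c=1$, the pseudocomplement property gives $a\leq c^{\top}$ and $b\leq c^{\top}$ from the meets, and $c^{\top}\leq a$ and $c^{\top}\leq b$ from the joins (via $a^{\top}\wedge c^{\top}=0$), forcing $a=c^{\top}=b$; the $M_{3}$ case is similar and easier. Either way, add the pseudocomplement observation and your proof closes.
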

\begin{proof}

This result can be found in \cite{leo} for BL-algebras. Its proof is also valid for bounded distributive lattices and residuated lattices.\end{proof}

\begin{definition}
Let $m$ be an infinite cardinal. An $m$-complete lattice\index{lattice!$m$-complete} is a lattice $L$ with the property that any subset $X$ of $L$ with $|X|\leq m$ has an infimum and a supremum in $L$.
\end{definition}

\begin{theorem} Let $L$ be a bounded distributive lattice and $m$ an infinite cardinal. Then the following are equivalent:

\noindent (i) for each non-empty subset $X$ of $L$ with $|X|\leq m$, there exists an element $e\in B(L)$ such that $X^{\top }=<e>$;

\noindent (ii) $L$ is a co-Stone lattice and $B(L)$ is an $m$-complete Boolean algebra;

\noindent (iii) $L_{\top \top }=\{l^{\top \top }|l\in L\}$ is an $m$-complete Boolean sublattice of ${\cal{F}}(L)$;

\noindent (iv) for all $l,p\in L$, $(l\vee p)^{\top }=l^{\top }\vee p^{\top }$ and, for each non-empty subset $X$ of $L$ with $|X|\leq m$, there exists an element $x\in L$ such that $X^{\top \top}=x^{\top }$;

\noindent (v) for each non-empty subset $X$ of $L$ with $|X|\leq m$, $X^{\top }\vee X^{\top \top }=L$.
\label{caractlco-Stone}
\end{theorem}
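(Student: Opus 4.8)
The plan is to prove the cycle of implications (i) $\Rightarrow$ (ii) $\Rightarrow$ (iii) $\Rightarrow$ (iv) $\Rightarrow$ (v) $\Rightarrow$ (i), relying on a few elementary facts valid in any bounded distributive lattice: for $e \in B(L)$ one has $\langle e \rangle = (\neg\, e)^{\top}$ (since $a \vee \neg\, e = 1$ iff $e \leq a$, by distributivity), hence $\langle e \rangle^{\top} = \langle \neg\, e \rangle$; one always has $X \subseteq X^{\top \top}$, so $X^{\top} = X^{\top \top \top}$; for principal filters $\langle a \rangle \cap \langle b \rangle = \langle a \vee b \rangle$ (Proposition \ref{`sisau`}) and $\langle a \rangle \vee \langle b \rangle = \langle a \wedge b \rangle$; co-annihilators are filters (Proposition \ref{topfiltru}); and the generator of a principal filter is its least element. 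The main tool is the following \emph{co-Stone criterion}: for any non-empty $X \subseteq L$, there is $e \in B(L)$ with $X^{\top} = \langle e \rangle$ if and only if $X^{\top} \vee X^{\top \top} = L$. The forward direction is $\langle e \rangle \vee \langle \neg\, e \rangle = \langle e \wedge \neg\, e \rangle = \langle 0 \rangle = L$; for the converse, $X^{\top} \vee X^{\top \top} = L$ forces $0$ into the filter generated by $X^{\top} \cup X^{\top \top}$, so there are $f \in X^{\top}$ and $g \in X^{\top \top}$ with $f \wedge g = 0$, while $f \vee g = 1$ because $g \in X^{\top \top} = (X^{\top})^{\top}$ and $f \in X^{\top}$; thus $f \in B(L)$ and $g = \neg\, f$, and then every $a \in X^{\top}$ satisfies $a \vee \neg\, f = 1$, i.e.\ $f \leq a$, whence $X^{\top} = \langle f \rangle$. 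This gives (v) $\Rightarrow$ (i) immediately, and shows that $L$ is co-Stone iff $l^{\top} \vee l^{\top \top} = L$ for every $l \in L$.

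For (i) $\Rightarrow$ (ii): restricting (i) to singletons gives that $L$ is co-Stone; applying (i) to $\{\neg\, s : s \in S\}$ for a non-empty $S \subseteq B(L)$ with $|S| \leq m$ yields $e \in B(L)$ with $\langle e \rangle = \{\, a \in L : s \leq a \text{ for all } s \in S \,\}$, so $e$ is the supremum of $S$ in $L$, hence in $B(L)$; De Morgan then supplies infima (the empty case being trivial), so $B(L)$ is $m$-complete. Whenever $L$ is co-Stone, let $\beta \colon L \to B(L)$ be the map with $l^{\top} = \langle \beta(l) \rangle$; it is antitone, satisfies $\beta(e) = \neg\, e$ for $e \in B(L)$ (so $\beta$ is surjective onto $B(L)$ and $\beta \circ \beta$ is complementation there), satisfies $l^{\top \top} = \langle \neg\, \beta(l) \rangle$, and satisfies $l \geq \neg\, \beta(l)$ because $l \in l^{\top \top}$. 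For (ii) $\Rightarrow$ (iii): surjectivity of $\beta$ gives $L_{\top \top} = \{\, \langle e \rangle : e \in B(L) \,\}$, and $e \mapsto \langle e \rangle$ is an order-anti-isomorphism of $B(L)$ onto $L_{\top \top}$ sending $\vee$, $\wedge$, $\neg$ to, respectively, $\cap$, $\vee$, ${}^{\top}$ on $L_{\top \top}$; since $L_{\top \top}$ is closed under the operations $\cap$ and $\vee$ of $\mathcal{F}(L)$ (Proposition \ref{latoffilt} and the principal-filter identities) and $B(L)$ is an $m$-complete Boolean algebra, $L_{\top \top}$ is an $m$-complete Boolean sublattice of $\mathcal{F}(L)$.

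For (iii) $\Rightarrow$ (iv): one first recovers co-Stoneness. The complement of $l^{\top \top}$ in the Boolean algebra $L_{\top \top}$ has the form $k^{\top \top}$, and from $l^{\top \top} \cap k^{\top \top} = \{1\}$ one gets $k^{\top \top} \subseteq l^{\top}$ (for $a \in k^{\top \top}$, the element $a \vee l$ lies in both $l^{\top \top}$ and $k^{\top \top}$, hence equals $1$), so $l^{\top} \vee l^{\top \top} \supseteq k^{\top \top} \vee l^{\top \top} = L$ and the criterion applies; the order-anti-isomorphism $L_{\top \top} \cong B(L)^{\mathrm{op}}$ then makes $B(L)$ $m$-complete. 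Clause (a) of (iv) amounts, via co-Stoneness, to $\beta(l \vee p) = \beta(l) \wedge \beta(p)$, which follows from antitonicity of $\beta$ together with the computation $(\beta(l) \wedge \beta(p)) \vee l \vee p = ((\beta(l) \vee l) \wedge (\beta(p) \vee l)) \vee p = (\beta(p) \vee l) \vee p = 1$. Clause (b) rests on the \emph{key lemma}: if $L$ is co-Stone and $B(L)$ is $m$-complete, then for every non-empty $X$ with $|X| \leq m$ one has $X^{\top} = \langle e \rangle$, where $e = \bigvee^{B(L)} \{\, \beta(x) : x \in X \,\}$. Indeed $X^{\top} = \bigcap_{x \in X} \langle \beta(x) \rangle$, so $e \in X^{\top}$ and $\langle e \rangle \subseteq X^{\top}$; conversely, if $a \geq \beta(x)$ for all $x \in X$, then $\beta(a) \leq \beta(\beta(x)) = \neg\, \beta(x)$ for all $x$, hence $\beta(a) \leq \bigwedge^{B(L)} \{\, \neg\, \beta(x) : x \in X \,\} = \neg\, e$, and therefore $a \geq \neg\, \beta(a) \geq e$. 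Granting this, $X^{\top} = \langle e \rangle$ gives $X^{\top \top} = \langle \neg\, e \rangle = e^{\top}$ with $e \in L$, which is clause (b). Finally, (iv) $\Rightarrow$ (v): by (b) there is $x$ with $X^{\top \top} = x^{\top}$, so $X^{\top} = x^{\top \top}$; applying (b) to $\{x\}$ yields $z$ with $x^{\top \top} = z^{\top}$, and $x \in x^{\top \top} = z^{\top}$ forces $x \vee z = 1$, so by (a) $L = 1^{\top} = (x \vee z)^{\top} = x^{\top} \vee z^{\top} = X^{\top} \vee X^{\top \top}$.

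The step I expect to be the main obstacle is the key lemma used in (iii) $\Rightarrow$ (iv): it is the one place where a supremum formed inside the Boolean algebra $B(L)$ must be turned into the right co-annihilator of $L$, and the inclusion $X^{\top} \subseteq \langle e \rangle$ genuinely requires co-Stoneness — a naive argument would need $e$ to be the supremum of $\{\, \beta(x) : x \in X \,\}$ in all of $L$, which can fail. The device that makes it go through — the identity $a \geq \neg\, \beta(a)$ coming from $a \in a^{\top \top}$, used together with the antitonicity of $\beta$, the fact that $\beta \circ \beta$ is complementation on $B(L)$, and De Morgan in the $m$-complete $B(L)$ — is the crux of the whole argument; everything else reduces to bookkeeping with co-annihilators and principal filters and to the transfer of $m$-completeness between $B(L)$ and its dual $L_{\top \top}$.
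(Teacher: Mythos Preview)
Your proof is correct and self-contained, but it is worth noting that the paper does not actually prove this theorem: its entire argument is the single line ``By duality, from \cite[Theorem 1]{dav}.'' In other words, the paper treats the statement as the order-theoretic dual of Davey's characterization of $m$-Stone lattices and simply imports it. You, by contrast, supply a full direct cycle (i) $\Rightarrow$ (ii) $\Rightarrow$ (iii) $\Rightarrow$ (iv) $\Rightarrow$ (v) $\Rightarrow$ (i), built around your ``co-Stone criterion'' ($X^{\top}=\langle e\rangle$ for some $e\in B(L)$ iff $X^{\top}\vee X^{\top\top}=L$) and the key lemma that, under co-Stoneness and $m$-completeness of $B(L)$, the co-annihilator $X^{\top}$ is generated by $\bigvee^{B(L)}\{\beta(x):x\in X\}$. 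Your approach buys independence from the cited source and makes transparent exactly where each hypothesis is used --- in particular, your remark that the inclusion $X^{\top}\subseteq\langle e\rangle$ genuinely needs co-Stoneness (via $a\geq\neg\,\beta(a)$) is precisely the content one would otherwise have to extract from Davey's paper. The paper's approach buys brevity and situates the result as a known dual. Both are legitimate; your write-up is essentially a reconstruction of (the dual of) Davey's argument.
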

\begin{proof}
By duality, from \cite[Theorem 1]{dav}.\end{proof}

A bounded distributive lattice will be called an {\em $m$-co-Stone lattice}\index{lattice!$m$-co-Stone} iff the conditions of Theorem \ref{caractlco-Stone} hold for it.

\begin{definition}
Let $A$ be a bounded lattice (residuated lattice) and $B$ a subalgebra of $A$. We say that $B$ is {\em co-dense in $A$}\index{co-dense} iff, for all $a\in A\setminus \{1\}$, there exists $b\in B$ such that $a\leq b<1$ (that is $a\leq b\leq 1$ and $b\neq 1$).
\end{definition}

We denote by ${\cal{RL}}$ the category of residuated lattices and by ${\cal{D}}01$ the category of bounded distributive lattices.

For the definitions related to the inductive limit, that we present below, we are using the terminology of \cite{bus}.

A partially ordered set $(I,\leq )$ is called a {\em directed set}\index{directed set} iff, for any $i,j\in I$, there exists an element $k\in I$ such that $i\leq k$ and $j\leq k$.

\begin{definition}
Let $(I,\leq )$ be a directed set and $\cal{C}$ a category. By an {\em inductive system}\index{inductive system} of objects in $\cal{C}$ with respect to the directed index set $I$ we mean a pair $((A_{i})_{i\in I},(\phi _{ij})_{\stackrel{\scriptstyle i,j\in I}{\scriptstyle i\leq j}})$ with $(A_{i})_{i\in I}$ a family of objects of $\cal{C}$ and, for all $i,j\in I$ with $i\leq j$, $\phi _{ij}:A_{i}\rightarrow A_{j}$ a morphism in ${\cal{C}}$, such that:

\noindent (i) for every $i\in I$, $\phi _{i\, i}=1_{A_{i}}$;

\noindent (ii) for any $i,j,k\in I$ with $i\leq j\leq k$, $\phi _{jk}\circ \phi _{ij}=\phi _{ik}$.

If there is no danger of confusion, an inductive system like above will be denoted $(A_{i},\phi _{ij})$.

\end{definition}

\begin{definition}
Let $(A_{i},\phi _{ij})$ be an inductive system of objects in a ca\-te\-go\-ry ${\cal{C}}$ relative to a directed index set $I$. A pair $(A,(\phi _{i})_{i\in I})$, with $A$ an object in ${\cal{C}}$ and, for all $i\in I$, $\phi _{i}:A_{i}\rightarrow A$ a morphism in ${\cal{C}}$, is called {\em inductive limit} \index{inductive limit} of the inductive system $(A_{i},\phi _{ij})$ iff:

\noindent (i) for every $i,j\in I$ with $i\leq j$, $\phi _{j}\circ \phi _{ij}=\phi _{i}$;

\begin{center}
\begin{picture}(60,60)(0,0)
\put(7,37){$A_{i}$}

\put(20,40){\vector(1,0){20}}
\put(42,37){$A_{j}$}
\put(22,44){$\phi _{ij}$}
\put(45,35){\vector(0,-1){20}}
\put(47,22){$\phi _{j}$}
\put(42,5){$A$}
\put(18,35){\vector(1,-1){22}}
\put(16,22){$\phi _{i}$}
\end{picture}

\end{center}

\noindent (ii) for any object $B$ of ${\cal{C}}$ and any family $(f_{i})_{i\in I}$ of morphisms in ${\cal{C}}$ such that, for all $i\in I$, $f_{i}:A_{i}\rightarrow B$ and, for all $i,j\in I$ with $i\leq j$, $f_{j}\circ \phi _{ij}=f_{i}$, there is a unique morphism $f:A\rightarrow B$ in ${\cal{C}}$ such that, for every $i\in I$, $f\circ \phi _{i}=f_{i}$.

\begin{center}
\begin{picture}(60,60)(0,0)
\put(7,37){$A_{i}$}

\put(20,40){\vector(1,0){20}}
\put(42,37){$A$}
\put(22,44){$\phi _{i}$}

\put(45,35){\vector(0,-1){20}}

\put(47,22){$f$}
\put(42,5){$B$}
\put(18,35){\vector(1,-1){22}}

\put(16,22){$f_{i}$}
\end{picture}
\end{center}
\end{definition}

It is immediate that the inductive limit\index{inductive limit} of a given inductive system is unique up to an isomorphism, that is, if $(A,(\phi _{i})_{i\in I})$ and $(B,(\psi _{i})_{i\in I})$ are two inductive limits of the same inductive system, then there exists a unique isomorphism $f:A\rightarrow B$ such that, for every $i\in I$, $f\circ \phi _{i}=\psi _{i}$.

The next lemma is known and easy to prove.

\begin{lemma}
Let $((A_{i})_{i\in I},(\phi _{ij})_{\stackrel{\scriptstyle i,j\in I}{\scriptstyle i\leq j}})$ and $((B_{i})_{i\in I},(\psi _{ij})_{\stackrel{\scriptstyle i,j\in I}{\scriptstyle i\leq j}})$ be two inductive systems in the same category, with inductive limits $(A,(\phi _{i})_{i\in I})$ and $(B,(\psi _{i})_{i\in I})$, respectively. If, for every $i\in I$, there exists an isomorphism $f_{i}:A_{i}\rightarrow B_{i}$ such that, for all $i,j\in I$ with $i\leq j$, $\psi _{ij}\circ f_{i}=f_{j}\circ \phi _{ij}$, then there exists an isomorphism $f:A\rightarrow B$ such that, for all $i\in I$, $f\circ \phi _{i}=\psi _{i}\circ f_{i}$.
\label{izomlimind}
\end{lemma}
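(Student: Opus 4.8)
The plan is to prove Lemma \ref{izomlimind} by using the universality of the inductive limit (part (ii) of the definition) to manufacture the morphism $f$, and then to use the same universality applied to the inverse isomorphisms $f_i^{-1}$ to manufacture an inverse for $f$. First I would observe that the family $(\psi_i \circ f_i)_{i \in I}$ consists of morphisms $A_i \to B$ in the category $\cal{C}$, and that it is compatible with the transition morphisms of the system $(A_i, \phi_{ij})$: for $i \le j$ we have $(\psi_j \circ f_j) \circ \phi_{ij} = \psi_j \circ (f_j \circ \phi_{ij}) = \psi_j \circ (\psi_{ij} \circ f_i) = (\psi_j \circ \psi_{ij}) \circ f_i = \psi_i \circ f_i$, where the middle equality is the hypothesis and the last uses part (i) of the inductive-limit definition for $(B, (\psi_i))$. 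Hence, by the universal property (ii) of $(A, (\phi_i))$ applied to the object $B$ and this compatible family, there is a unique morphism $f : A \to B$ with $f \circ \phi_i = \psi_i \circ f_i$ for all $i$.

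Next I would symmetrically produce a morphism $g : B \to A$ with $g \circ \psi_i = \phi_i \circ f_i^{-1}$ for all $i$. This requires checking that $(\phi_i \circ f_i^{-1})_{i \in I}$ is compatible with the transitions $\psi_{ij}$ of $(B_i, \psi_{ij})$; that is, for $i \le j$ one needs $(\phi_j \circ f_j^{-1}) \circ \psi_{ij} = \phi_i \circ f_i^{-1}$. This follows by rewriting the hypothesis $\psi_{ij} \circ f_i = f_j \circ \phi_{ij}$ as $f_j^{-1} \circ \psi_{ij} = \phi_{ij} \circ f_i^{-1}$ (pre-compose with $f_j^{-1}$, post-compose with $f_i^{-1}$) and then composing with $\phi_j$ on the left, using part (i) of the definition for $(A, (\phi_i))$. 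The universal property of $(B, (\psi_i))$ then yields the desired $g$.

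Finally I would show $g \circ f = 1_A$ and $f \circ g = 1_B$, again by uniqueness in the universal property. For each $i$ we compute $(g \circ f) \circ \phi_i = g \circ (f \circ \phi_i) = g \circ (\psi_i \circ f_i) = (g \circ \psi_i) \circ f_i = (\phi_i \circ f_i^{-1}) \circ f_i = \phi_i$. Thus both $g \circ f$ and $1_A$ are morphisms $A \to A$ whose composites with every $\phi_i$ equal $\phi_i$; since $(A, (\phi_i))$ is an inductive limit, $1_A$ is one such morphism and the uniqueness clause of part (ii) (applied with $B := A$ and $f_i := \phi_i$) forces $g \circ f = 1_A$. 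The identity $f \circ g = 1_B$ is obtained the same way, swapping the roles of the two systems. Combined with the relation $f \circ \phi_i = \psi_i \circ f_i$, this gives exactly the isomorphism claimed in the statement.

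I do not expect any serious obstacle here; the lemma is purely formal category theory (as the paper itself says, ``known and easy to prove''). The only points demanding care are bookkeeping: keeping straight which universal property is invoked at each step, correctly using clauses (i) of the two inductive-limit data to collapse $\psi_j \circ \psi_{ij}$ to $\psi_i$ (respectively $\phi_j \circ \phi_{ij}$ to $\phi_i$) — wait, more precisely one uses $\psi_j \circ \psi_{ij} = \psi_i$ from clause (i) of the \emph{limit} for $(B,(\psi_i))$ — and remembering that uniqueness in clause (ii) is what converts the pointwise identities into the global identities $g\circ f = 1_A$ and $f\circ g = 1_B$.
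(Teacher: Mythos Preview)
Your proof is correct and is the standard category-theoretic argument one would give. The paper itself does not supply a proof of this lemma; it merely introduces it with the sentence ``The next lemma is known and easy to prove,'' so there is nothing to compare against beyond noting that your approach is exactly the routine verification the author had in mind.
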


We say that a category ${\cal{C}}$ is a {\em category with inductive limits}\index{category with inductive limits} iff every inductive system in ${\cal{C}}$ has an inductive limit. The category of sets, the category of residuated lattices and the category of bounded distributive lattices are categories with inductive limits. Indeed, \cite[Example 4.7.2]{bus} contains the construction of the inductive limits in any equational class of algebras. 

In the following, we shall present a construction for the inductive limit in the category of residuated lattices. As we believe that this construction is known, we shall not give any proofs here.\index{inductive limit} See also \cite{bus}.

Let $(A_{i},\phi _{ij})$ be an inductive system in ${\cal{RL}}$. We denote by $\displaystyle \coprod _{i\in I}A_{i}$ the disjoint union of the family $(A_{i})_{i\in I}$. Let us consider the following relation on $\displaystyle \coprod _{i\in I}A_{i}$: for all $i,j\in I$, all $a\in A_{i}$ and all $b\in A_{j}$, $a\sim b$ iff there exists $k\in I$ such that $i\leq k$, $j\leq k$ and $\phi _{ik}(a)=\phi _{jk}(b)$. It is immediate that $\sim $ is an equivalence relation on $\displaystyle \coprod _{i\in I}A_{i}$. We denote by $A$ the quotient set $\left( \displaystyle \coprod _{i\in I}A_{i}\right) /\sim$ and by $[a]$ the equivalence class of an element $a\in \displaystyle \coprod _{i\in I}A_{i}$. For any $i\in I$, let $\phi _{i}:A_{i}\rightarrow A$, for all $a\in A_{i}$, $\phi _{i}(a)=[a]$.

Let us define residuated lattice operations on $A$. We define $0=[0]$ and $1=[1]$. Obviously, this definition does not depend on the residuated lattice $A_{i}$ the 0 and the 1 are taken from. Let $[a],[b]\in A$. Let $i,j\in I$ such that $a\in A_{i}$ and $b\in A_{j}$. Then, by the definition of the directed set, there exists $k\in I$ such that $i\leq k$ and $j\leq k$. We define $[a]\vee [b]=[\phi _{ik}(a)\vee \phi _{jk}(b)]$ and $[a]\wedge [b]=[\phi _{ik}(a)\wedge \phi _{jk}(b)]$. The same for $\odot $ and $\rightarrow $. Here is the definition of the partial order relation: for all $[a],[b]\in A$ with $a\in A_{i}$ and $b\in A_{j}$ for some $i,j\in I$, we define: $[a]\leq [b]$ iff there exists $k\in I$ such that $i\leq k$, $j\leq k$ and $\phi _{ik}(a)\leq \phi _{jk}(b)$.

Then $(A,(\phi _{i})_{i\in I})$ is an inductive limit of the inductive system $(A_{i},\phi _{ij})$ in the category $\cal{RL}$.

A similar construction can be done for inductive limits in the category ${\cal{D}}01$.\index{inductive limit}

Now let $P(B)$\index{$P(B)$} be the set of the finite partitions\index{poset of finite partitions} of a Boolean algebra $B$, that is $\displaystyle P(B)=\{\{x_{1},\ldots ,x_{n}\}|n\in \N ^{*},x_{1},\ldots ,x_{n}\in B\setminus \{0\},\bigvee _{i=1}^{n}x_{i}=1, (\forall i,j\in \overline{1,n})i\neq j\Rightarrow x_{i}\wedge x_{j}=0\}$. We define the partial order $\leq $ on $P(B)$ by: for all $p,q\in P(B)$, $p\leq q$ iff $q$ is a refinement of $p$, that is: $p=\{x_{1},\ldots ,x_{n}\}$ and $q=\{y_{ij}|i\in \overline{1,n},(\forall i\in \overline{1,n})j\in \overline{1,k_{i}}\}$, where $n,k_{1},\ldots ,k_{n}\in \N ^{*}$ and, for all $i\in \overline{1,n}$, $\displaystyle \bigvee _{j=1}^{k_{i}}y_{ij}=x_{i}$. For all $p,q\in P(B)$ with $p\leq q$, we define $k_{pq}:q\rightarrow p$, for all $a\in q$ and $b\in p$, $k_{pq}(a)=b$ iff $a\leq b$ (it is easily seen that, for every $a\in q$, there exists a unique $b\in p$ such that $a\leq b$; namely, with the notations above for the elements of $p$ and those of $q$, for all $i\in \overline{1,n}$ and all $j\in \overline{1,k_{i}}$, $k_{pq}(y_{ij})=x_{i}$). The fact that the functions $k_{pq}$ are well defined is obvious (if, for an $a\in q$, there exist $b_{1},b_{2}\in p$, $b_{1}\neq b_{2}$ and $a\leq b_{1}$, $a\leq b_{2}$, then $a\leq b_{1}\wedge b_{2}=0$, so $a=0$, which is a contradiction to the definition of $P(B)$).

Let us now turn our attention to the reticulation of a residuated lattice. The reticulation of an algebra was first defined by Simmons (\cite{sim}) for commutative rings and then by Belluce for MV-algebras (\cite{bel1}). Later, it was extended by Belluce to non-commutative rings (\cite{bel2}) and then it was defined for quantales (\cite{geo1}) and for BL-algebras (\cite{leo1}, \cite{leo}). In each of the papers cited above, although it is not explicitely defined this way, the reticulation of an algebra $A$ is a pair $(L(A),\lambda )$ consisting of a bounded distributive lattice $L(A)$ and a surjection $\lambda :A\rightarrow L(A)$ such that the function given by the inverse image of $\lambda $ induces (by restriction) a homeomorphism of topological spaces between the prime spectrum of $L(A)$ and that of $A$. This construction allows many properties to be transferred between $L(A)$ and $A$, and this transfer of properties between the category of bounded distributive lattices and another category (in our case that of residuated lattices) is the very purpose of the reticulation.

Here is the definition that we gave in \cite{eu1} for the reticulation of a residuated lattice. This axiomatic definition is purely algebraic, thus being an innovation in the study of the reticulation, as in previous work the reticulation of an algebra was defined by its construction.

\begin{definition}{\rm \cite{eu1}} Let $A$ be a residuated lattice. A {\em reticulation of $A$} is a pair $(L,\lambda )$, where $L$ is a bounded distributive lattice and $\lambda :A\rightarrow L$ is a function that satisfies conditions 1)-5) below:

\noindent 1) for all $a,b\in A$, $\lambda (a\odot b)=\lambda (a)\wedge \lambda (b)$;

\noindent 2) for all $a,b\in A$, $\lambda (a\vee b)=\lambda (a)\vee \lambda (b)$;

\noindent 3) $\lambda (0)=0$; $\lambda (1)=1$;

\noindent 4) $\lambda $ is surjective;

\noindent 5) for all $a,b\in A$, $\lambda (a)\leq \lambda (b)$ iff $(\exists \, n\in \N ^{*})\, a^{n}\leq b$.
\label{reticulatia}
\end{definition}

In \cite{eu1} and \cite{eu2} we proved that this definition is in accordance with the general notion of reticulation applied to residuated lattices, more precisely that, given a residuated lattice $A$ and a pair $(L,\lambda )$ consisting of a bounded distributive lattice $L$ and a function $\lambda :A\rightarrow L$, we have: if $\lambda $ satisfies conditions 1)-5) above, then its inverse image induces (by restriction) a homeomorphism between the prime spectrum of $L$ and that of $A$ (regarded as topological spaces with the Stone topologies); and conversely: if the function given by the inverse image of $\lambda $ takes prime filters of $L$ to prime filters of $A$ and its restriction to the prime spectrum of $L$ is a homeomorphism between the prime spectrum of $L$ and that of $A$ (with the Stone topologies), then $\lambda $ satisfies conditions 1)-5) from the definition above.

\begin{lemma}
With the notations in Definition \ref{reticulatia}, a function $\lambda $ that verifies conditions 1)-3) also satisfies:

\noindent a) $\lambda $ is order-preserving;\index{a)}

\noindent b) for all $a,b\in A$, $\lambda (a\wedge b)=\lambda (a)\wedge \lambda (b)$;\index{b)}

\noindent c) for all $a\in A$ and all $n\in \N ^{*}$, $\lambda (a^{n})=\lambda (a)$.\index{c)}
\label{`abc`}
\end{lemma}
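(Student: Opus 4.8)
The plan is to establish a), b), c) in that order, since b) relies on a). For a), I would use condition 2) alone: if $a,b\in A$ with $a\leq b$, then $a\vee b=b$ in $A$, so applying $\lambda $ and invoking 2) gives $\lambda (b)=\lambda (a\vee b)=\lambda (a)\vee \lambda (b)$, which is exactly the statement $\lambda (a)\leq \lambda (b)$ in $L$.

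For b), the key ingredient is the standard residuated-lattice inequality $a\odot b\leq a\wedge b$: since $a\rightarrow a=1$ by Lemma \ref{calcul}, (\ref{art5(iv)}), we have $b\leq 1=a\rightarrow a$, hence $a\odot b=b\odot a\leq a$ by the law of residuation together with the commutativity of $\odot $, and symmetrically $a\odot b\leq b$, so $a\odot b\leq a\wedge b$. Now, on the one hand $a\wedge b\leq a$ and $a\wedge b\leq b$ together with a) give $\lambda (a\wedge b)\leq \lambda (a)\wedge \lambda (b)$; on the other hand $a\odot b\leq a\wedge b$ together with a) and condition 1) give $\lambda (a)\wedge \lambda (b)=\lambda (a\odot b)\leq \lambda (a\wedge b)$. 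Hence $\lambda (a\wedge b)=\lambda (a)\wedge \lambda (b)$.

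For c), I would argue by induction on $n\in \N ^{*}$: the case $n=1$ is trivial, and, assuming $\lambda (a^{n})=\lambda (a)$, condition 1) yields $\lambda (a^{n+1})=\lambda (a^{n}\odot a)=\lambda (a^{n})\wedge \lambda (a)=\lambda (a)\wedge \lambda (a)=\lambda (a)$, which closes the induction. None of these steps presents a genuine obstacle; the only point worth flagging is that b) really does use a residuated-lattice fact ($a\odot b\leq a\wedge b$) rather than being purely formal, and that condition 3) is in fact not needed for any of a), b), c)---it appears among the hypotheses only because the lemma is phrased for functions satisfying 1)--3).
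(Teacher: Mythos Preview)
The paper states this lemma without proof (it is presumably regarded as folklore or as following directly from \cite{eu1}), so there is no argument to compare against. Your proof is correct and is the standard one: a) follows immediately from condition 2), b) follows from a) together with condition 1) and the residuated-lattice inequality $a\odot b\leq a\wedge b$, and c) follows from condition 1) by induction; your observation that condition 3) is not actually used is also accurate.
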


We shall use the notations of the conditions 1)-5) and of the properties a)-c) in what follows. 

The following theorem states the existence and uniqueness of the re\-ti\-cu\-la\-tion for any residuated lattice. Here, as in many other cases, one can see the usefulness of the axiomatic purely algebraic definition of the reticulation, which allows us to provide a simple algebraic proof for the uniqueness of the reticulation; this is another important novelty, as in previous work the argument for the uniqueness of the reticulation was of topological nature and consisted of the fact that there is at most one bounded distributive lattice whose prime spectrum is homeomorphic to a given topological space.

\begin{theorem}{\rm \cite{eu1}} Let $A$ be a residuated lattice. Then there exists a reticulation of $A$. Let $(L_{1},\lambda _{1})$, $(L_{2},\lambda _{2})$ be two reticulations of $A$. Then there exists an isomorphism of bounded lattices $f:L_{1}\rightarrow L_{2}$ such that $f\circ \lambda _{1}=\lambda _{2}$.
\label{`unicitatea`}
\end{theorem}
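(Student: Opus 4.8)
I would realize the reticulation inside the filter lattice $(\mathcal{F}(A),\vee ,\cap ,\{1\},A)$, which is distributive by Proposition \ref{latoffilt}. Take $L(A)$ to be the set $\{<a>\mid a\in A\}$ of principal filters of $A$, equipped with the order \emph{opposite} to $\subseteq$, and define $\lambda :A\to L(A)$ by $\lambda (a)=<a>$. The crucial computation is that $L(A)$ is a sublattice of $\mathcal{F}(A)$: for any filter $F$, Remark \ref{inF} gives $\{a,b\}\subseteq F\iff a\odot b\in F\iff a\wedge b\in F$, so the smallest filters containing $\{a,b\}$, containing $a\odot b$, and containing $a\wedge b$ all coincide, i.e.\ $<a>\vee <b>\, =\, <a\odot b>\, =\, <a\wedge b>$; together with $<a>\cap <b>\, =\, <a\vee b>$ (Proposition \ref{`sisau`}) this shows $\{<a>\mid a\in A\}$ is closed under $\vee $ and $\cap $ in $\mathcal{F}(A)$. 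Hence $L(A)$, in the dual order, is a bounded distributive lattice with bottom $\lambda (0)=<0>\, =A$ and top $\lambda (1)=<1>\, =\{1\}$. Reading the two displayed identities in the dual order (so that $\cap $ is the join and $\vee $ the meet of $L(A)$) is exactly conditions 1) and 2); condition 3) is the identification of the bounds just made; condition 4) holds by construction; and condition 5) is the assertion that $\lambda (a)\leq \lambda (b)$, i.e.\ $<b>\subseteq <a>$, i.e.\ $b\in <a>$, is equivalent to $(\exists \, n\in \N^{*})\, a^{n}\leq b$, which is Lemma \ref{principal}.

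\textbf{Uniqueness.} The key observation is that for \emph{any} reticulation $(L,\lambda )$ and any $a,b\in A$, combining conditions 4) and 5) yields
\[
\lambda (a)=\lambda (b)\quad\Longleftrightarrow\quad (\exists \, m\in \N^{*})\, a^{m}\leq b\ \text{ and }\ (\exists \, n\in \N^{*})\, b^{n}\leq a,
\]
and the right-hand side does not mention $L$. So if $(L_{1},\lambda _{1})$ and $(L_{2},\lambda _{2})$ are two reticulations of $A$, the induced equivalence relations (kernels) on $A$ coincide. Using surjectivity of $\lambda _{1}$ I would define $f:L_{1}\to L_{2}$ by $f(\lambda _{1}(a))=\lambda _{2}(a)$; the equivalence above makes $f$ well defined and injective, and surjectivity of $\lambda _{2}$ makes $f$ onto. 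Finally $f$ is a morphism of bounded lattices: for $\vee $ use condition 2) for both $\lambda _{1}$ and $\lambda _{2}$, for $\wedge $ route through $\odot $ and use condition 1) for both (e.g.\ $f(\lambda _{1}(a)\wedge \lambda _{1}(b))=f(\lambda _{1}(a\odot b))=\lambda _{2}(a\odot b)=\lambda _{2}(a)\wedge \lambda _{2}(b)$), and for the bounds use condition 3); surjectivity of $\lambda _{1}$ guarantees every element of $L_{1}$ is of the form $\lambda _{1}(a)$, so these checks suffice. Thus $f$ is a bounded-lattice isomorphism with $f\circ \lambda _{1}=\lambda _{2}$.

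\textbf{Main obstacle.} I do not expect a genuine obstacle here: once the right auxiliary facts (Proposition \ref{latoffilt}, Remark \ref{inF}, Proposition \ref{`sisau`}, Lemma \ref{principal}) are assembled, existence is essentially bookkeeping, and the single point that requires care is to take the order on $L(A)$ to be the \emph{reverse} of filter inclusion, so that $\lambda (0)$ and $\lambda (1)$ land on the bottom and the top rather than the other way around. The one clean idea is in the uniqueness argument: recognizing that the fibres of any $\lambda $ are the intrinsic classes ``$a$ and $b$ generate the same principal filter'', which reduces the statement to the fact that a surjection is determined up to isomorphism by its kernel, here upgraded to bounded lattices because all three operations are transported along the common quotient map.
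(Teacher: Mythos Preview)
Your proposal is correct and follows essentially the same route the paper describes: the existence construction via principal filters with the reverse inclusion order is exactly Theorem \ref{`alternate`} (the paper's ``second construction'' from \cite{eu1}), and your uniqueness argument---that condition 5) pins down the kernel of any $\lambda$ intrinsically, so $f(\lambda_{1}(a))=\lambda_{2}(a)$ is well defined and a bounded lattice isomorphism---is precisely the ``simple algebraic proof'' the paper alludes to just before the statement.
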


Until mentioned otherwise, let $A$ be a residuated lattice and $(L,\lambda )$ its reticulation.

\begin{lemma}{\rm \cite{eu1}} For any filters $F$, $G$ of $A$, we have: $\lambda (F)=\lambda (G)$ iff $F=G$.
\label{lambdaegal}
\end{lemma}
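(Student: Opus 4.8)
The plan is to deduce the lemma from the single identity $\lambda ^{-1}(\lambda (F))=F$, valid for every filter $F$ of $A$. Granting this, the nontrivial implication is immediate: if $\lambda (F)=\lambda (G)$, then $F=\lambda ^{-1}(\lambda (F))=\lambda ^{-1}(\lambda (G))=G$; and the reverse implication is trivial, since $F=G$ visibly forces $\lambda (F)=\lambda (G)$. So the whole content lies in proving $\lambda ^{-1}(\lambda (F))=F$ for an arbitrary filter $F$.

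The inclusion $F\subseteq \lambda ^{-1}(\lambda (F))$ holds for any subset $F$ whatsoever, so only the reverse inclusion needs an argument. Take $a\in A$ with $\lambda (a)\in \lambda (F)$. By definition of the direct image there is some $b\in F$ with $\lambda (b)=\lambda (a)$; in particular $\lambda (b)\leq \lambda (a)$, so condition 5) of Definition \ref{reticulatia} supplies an $n\in \N ^{*}$ with $b^{n}\leq a$. Now $b\in F$ and $F$ is a filter of $A$, hence closed under $\odot $ (Definition \ref{`filtera`}, condition (i)), so $b^{n}=\underbrace{b\odot \ldots \odot b}_{n}\in F$; and since $b^{n}\leq a$, upward closure of $F$ (Definition \ref{`filtera`}, condition (ii)) gives $a\in F$. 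This proves $\lambda ^{-1}(\lambda (F))\subseteq F$, and hence equality. (As a byproduct one even sees, using conditions 1) and 2), that $\lambda (F)$ is itself a filter of $L$, but this is not needed for the statement.)

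I do not expect any real obstacle here; the lemma is a routine consequence of the defining properties of the reticulation together with the filter axioms. The only point worth being attentive to is that membership $\lambda (a)\in \lambda (F)$ yields an \emph{equality} $\lambda (a)=\lambda (b)$ with $b\in F$, i.e. an inequality $\lambda (b)\leq \lambda (a)$ in one chosen direction, and that this one-sided inequality is precisely what condition 5) requires in order to transport the relation back down to $A$ as $b^{n}\leq a$; closure under $\odot $ and under passing to larger elements then finishes the job.
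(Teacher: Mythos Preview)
Your argument is correct: reducing the lemma to the identity $\lambda^{-1}(\lambda(F))=F$ and then using condition 5) of Definition \ref{reticulatia} together with the filter axioms (closure under $\odot$ and upward closure) is exactly the right mechanism, and every step is justified.

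As for comparison with the paper: the paper does not actually prove this lemma here but merely cites it from \cite{eu1}, so there is no in-text proof to compare against. Your proof is self-contained and appropriate; the parenthetical remark that $\lambda(F)$ is itself a filter anticipates Lemma \ref{`lambdaf`}, which the paper also imports from \cite{eu1}.
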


\begin{remark}{\rm \cite{eu1}} For all $a\in A$, $\lambda (<a>)=<\lambda (a)>$.
\label{lambdagen}
\end{remark}

\begin{lemma}{\rm \cite{eu1}} For any filter $F$ of $A$, $\lambda (F)$ is a filter of $L$.
\label{`lambdaf`}
\end{lemma}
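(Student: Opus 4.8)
The plan is to verify directly that $\lambda(F)$ satisfies the three defining conditions of a filter of the lattice $L$ (Definition \ref{`filterl`}). Since $F$ is a filter of $A$, it contains $1$, so by condition 3) of Definition \ref{reticulatia} we get $\lambda(1)=1\in\lambda(F)$; in particular $\lambda(F)$ is nonempty.

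For closure under $\wedge$, take $x,y\in\lambda(F)$ and write $x=\lambda(a)$, $y=\lambda(b)$ with $a,b\in F$. As $F$ is a filter of $A$ we have $a\odot b\in F$, and condition 1) gives $\lambda(a\odot b)=\lambda(a)\wedge\lambda(b)=x\wedge y$, so $x\wedge y\in\lambda(F)$. (One could equally use $a\wedge b\in F$ via Remark \ref{inF} together with property b).)

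For upward closure — the only step that is not a one-line translation — suppose $x\in\lambda(F)$ and $x\leq z$ in $L$. Write $x=\lambda(a)$ with $a\in F$, and use surjectivity (condition 4)) to write $z=\lambda(c)$ for some $c\in A$. Then $\lambda(a)\leq\lambda(c)$, so condition 5) yields $n\in\N^{*}$ with $a^{n}\leq c$. Since $F$ is closed under $\odot$ we have $a^{n}\in F$, and upward closure of $F$ in $A$ then forces $c\in F$; hence $z=\lambda(c)\in\lambda(F)$.

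I do not expect a genuine obstacle: the argument is a routine use of the reticulation axioms. The one point worth flagging is that upward closure of $\lambda(F)$ cannot be obtained from condition 2) alone — it genuinely requires both the surjectivity of $\lambda$ and the order condition 5) of Definition \ref{reticulatia}, which is precisely what lets one "pull back" an element $z\geq\lambda(a)$ to an element of $F$.
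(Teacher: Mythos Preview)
Your proof is correct and is the natural direct verification of the filter axioms using conditions 1), 3), 4), 5) of Definition~\ref{reticulatia}. Note, however, that the present paper does not actually supply a proof of this lemma: it is merely quoted from \cite{eu1}, so there is no in-paper argument to compare against; your argument is exactly the standard one that the citation points to, and your remark that upward closure genuinely requires both surjectivity and condition~5) is well taken.
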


For any filter $F$ of $A$, let us denote $\mu (F)=\{\lambda (a)|a\in F\}=\lambda (F)$. By Lemma \ref{`lambdaf`}, we have defined a function $\mu :{\cal{F}}(A)\rightarrow {\cal{F}}(L)$.\index{$\mu $}

\begin{proposition}
The function $\mu :{\cal{F}}(A)\rightarrow {\cal{F}}(L)$ defined above is a bounded lattice isomorphism.\index{$\mu $}
\label{`izomf`}
\end{proposition}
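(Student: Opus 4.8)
I want to show that $\mu:\mathcal{F}(A)\to\mathcal{F}(L)$, $F\mapsto\lambda(F)$, is a bounded lattice isomorphism. By Lemma \ref{`lambdaf`} the map is well-defined; by Proposition \ref{latoffilt} both $\mathcal{F}(A)$ and $\mathcal{F}(L)$ are (complete distributive) lattices, so it suffices to check that $\mu$ is a bijection that is an order-isomorphism, since for lattices an order-isomorphism automatically preserves $\vee$ and $\wedge$. The bounds are handled immediately: $\mu(\{1\})=\{\lambda(1)\}=\{1\}$ by condition 3), and $\mu(A)=\lambda(A)=L$ by condition 4). So the content is bijectivity plus monotonicity in both directions.

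\emph{Injectivity} is exactly Lemma \ref{lambdaegal}: if $\mu(F)=\mu(G)$ then $\lambda(F)=\lambda(G)$, hence $F=G$. For \emph{order-preservation}, $F\subseteq G$ trivially gives $\lambda(F)\subseteq\lambda(G)$, i.e. $\mu(F)\subseteq\mu(G)$. For the converse direction — which is also what will yield surjectivity — I need to understand $\lambda^{-1}$ of a filter. The key auxiliary claim is: for any filter $H$ of $L$, the set $\lambda^{-1}(H)$ is a filter of $A$, and $\lambda(\lambda^{-1}(H))=H$. Surjectivity of $\lambda$ (condition 4) gives $\lambda(\lambda^{-1}(H))=H$ for free. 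That $\lambda^{-1}(H)$ is a filter of $A$ follows from conditions 1) and a): if $a,b\in\lambda^{-1}(H)$ then $\lambda(a\odot b)=\lambda(a)\wedge\lambda(b)\in H$ since $H$ is closed under $\wedge$; and if $a\in\lambda^{-1}(H)$, $a\le b$, then $\lambda(a)\le\lambda(b)$ by a), so $\lambda(b)\in H$, i.e. $b\in\lambda^{-1}(H)$; nonemptiness is clear since $1\in\lambda^{-1}(H)$. Hence $\mu$ is surjective. For the reverse monotonicity, suppose $\mu(F)\subseteq\mu(G)$, i.e. $\lambda(F)\subseteq\lambda(G)$. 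Then $F\subseteq\lambda^{-1}(\lambda(F))\subseteq\lambda^{-1}(\lambda(G))$, and it remains to see $\lambda^{-1}(\lambda(G))=G$, i.e. that $G$ is "saturated" under $\lambda$. This is where condition 5) enters: if $\lambda(a)\in\lambda(G)$, pick $g\in G$ with $\lambda(a)=\lambda(g)$; then $\lambda(a)\le\lambda(g)$ gives, by 5), some $n$ with $a^n\le g$, so $a^n\in G$ (filter, upward closed), and then $a\in G$ by Remark \ref{inF} (since $a^n\in G$ forces $a\in G$ — indeed $a^n = a\odot\cdots\odot a\in G$ iff each factor $a\in G$). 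Therefore $\lambda^{-1}(\lambda(G))=G$, giving $F\subseteq G$.

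\emph{Assembling.} We have: $\mu$ is well-defined (Lemma \ref{`lambdaf`}), injective (Lemma \ref{lambdaegal}), surjective (via $\lambda^{-1}$ of filters being filters), and an order-isomorphism ($F\subseteq G \iff \mu(F)\subseteq\mu(G)$, using the saturation argument above for "$\Leftarrow$"). Since an order-isomorphism between lattices is a lattice isomorphism, and $\mu$ respects $0$ and $1$ of the filter lattices, $\mu$ is a bounded lattice isomorphism. If one prefers a direct verification of the operations rather than invoking the order-isomorphism principle: $\mu(F\cap G)=\lambda(F\cap G)$ and one checks $\lambda(F\cap G)=\lambda(F)\cap\lambda(G)=\mu(F)\cap\mu(G)$ using injectivity-type saturation (the inclusion $\subseteq$ is trivial; for $\supseteq$, if $\lambda(a)=\lambda(f)=\lambda(g)$ with $f\in F,g\in G$, use 5) to get $a^n\le f$ and $a^m\le g$, hence $a^{\max(n,m)}\in F\cap G$, hence $a\in F\cap G$); for the join, $\mu(F\vee G)=\mu(\langle F\cup G\rangle)=\langle\lambda(F\cup G)\rangle=\langle\lambda(F)\cup\lambda(G)\rangle=\lambda(F)\vee\lambda(G)$, where the middle equality that $\lambda$ of a generated filter is the generated filter of the image follows from condition 5) and Lemma \ref{principal} together with its analogue in $L$ — this generalizes Remark \ref{lambdagen}.

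\textbf{Main obstacle.} The only non-formal point is the saturation identity $\lambda^{-1}(\lambda(G))=G$ for filters $G$ of $A$ — equivalently, that $\lambda(a)\in\lambda(G)$ already forces $a\in G$. Everything hinges on combining condition 5) (to descend from $\lambda(a)\le\lambda(g)$ to $a^n\le g$ for some $n$) with the fact that a filter containing a power $a^n$ must contain $a$ itself (Remark \ref{inF}, since $a^n\in G$ iff $a\wedge\cdots\wedge a=a\in G$). Once this is in hand, both surjectivity of $\mu$ and the reverse implication of the order-equivalence fall out, and the rest is routine bookkeeping with conditions 1)–4) and property a).
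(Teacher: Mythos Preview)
The paper does not supply a proof for this proposition; it is stated in the preliminaries section without argument (and without an explicit citation, though the surrounding results are quoted from \cite{eu1}, so presumably it is either proved there or regarded as immediate from Lemmas \ref{lambdaegal} and \ref{`lambdaf`}). Your proof is correct and complete: well-definedness from Lemma \ref{`lambdaf`}, injectivity from Lemma \ref{lambdaegal}, surjectivity via $\lambda^{-1}(H)\in\mathcal{F}(A)$ together with condition 4), and the order-reflection from the saturation identity $\lambda^{-1}(\lambda(G))=G$, which you correctly derive from condition 5) and Remark \ref{inF}. The optional direct verification of $\mu(F\cap G)=\mu(F)\cap\mu(G)$ is also fine; the join computation is a bit sketchy as written (the step $\lambda(\langle F\cup G\rangle)=\langle\lambda(F\cup G)\rangle$ would need its own argument), but it is redundant once you have the order-isomorphism, so this does not affect the validity of the proof.
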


In \cite{eu1} and \cite{eu2}, we defined {\em the reticulation functor} ${\cal{L}}:{\cal{RL}}\rightarrow {\cal{D}}01$. If $A$ is a residuated lattice and $(L(A),\lambda _{A})$ is its reticulation, then ${\cal{L}}(A)=L(A)$. If $B$ is another residuated lattice, $(L(B),\lambda _{B})$ is its reticulation and $f:A\rightarrow B$ is a morphism of residuated lattices, then ${\cal{L}}(f):{\cal{L}}(A)=L(A)\rightarrow {\cal{L}}(B)=L(B)$, for all $a\in A$, ${\cal{L}}(f)(\lambda _{A}(a))=\lambda _{B}(f(a))$. This definition makes ${\cal{L}}$ a covariant functor from ${\cal{RL}}$ to ${\cal{D}}01$.

Here is an alternate definition of ${\cal{L}}$, which is in accordance with the one above:

\begin{proposition}
Let $A,B$ be residuated lattices, $f:A\rightarrow B$ a morphism of residuated lattices and $({\cal{L}}(A),\lambda _{A}),({\cal{L}}(B),\lambda _{B})$ the reticulations of $A$ and $B$, respectively. Then there exists a unique bounded lattice morphism $h:{\cal{L}}(A)\rightarrow {\cal{L}}(B)$ such that $h\circ \lambda _{A}=\lambda _{B}\circ f$ (i. e. that makes the diagram below commutative).

\begin{center}
\begin{picture}(170,70)(0,0)
\put(17,53){$A$}
\put(20,50){\vector(0,-1){20}}
\put(4,38){$\lambda _{A}$}

\put(125,38){$\lambda _{B}$}
\put(20,18){${\cal{L}}(A)$}
\put(26,58){\vector(1,0){90}}
\put(68,61){$f$}
\put(119,53){$B$}
\put(119,18){${\cal{L}}(B)$}
\put(47,22){\vector(1,0){69}}
\put(122,50){\vector(0,-1){20}}

\put(60,9){$h={\cal{L}}(f)$}
\end{picture}
\end{center}
\label{`lmorfisme`}
\end{proposition}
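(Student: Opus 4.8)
The plan is short, because the formula for $h$ is forced and everything then reduces to a well-definedness check plus a routine verification that $h$ respects the bounded-lattice operations.

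First I would dispose of uniqueness. Since $\lambda _{A}$ is surjective (condition 4 of Definition \ref{reticulatia}), every element of ${\cal{L}}(A)$ has the form $\lambda _{A}(a)$ with $a\in A$, and any $h$ with $h\circ \lambda _{A}=\lambda _{B}\circ f$ must satisfy $h(\lambda _{A}(a))=\lambda _{B}(f(a))$; thus $h$ is completely determined by the required commutativity, and the only real question is whether this prescription defines a bounded lattice morphism at all.

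Next, existence. I would set $h(\lambda _{A}(a)):=\lambda _{B}(f(a))$ for all $a\in A$ and prove this is well-defined — this is the one step that needs a genuine argument, and it is precisely here that condition 5 is used. Suppose $\lambda _{A}(a)=\lambda _{A}(a')$. Applying condition 5 for the reticulation $({\cal{L}}(A),\lambda _{A})$ in both directions, there are $n,m\in \N ^{*}$ with $a^{n}\leq a'$ and $a'^{m}\leq a$. Since $f$ is a morphism of residuated lattices it preserves $\odot $ and the order, so $f(a)^{n}=f(a^{n})\leq f(a')$ and $f(a')^{m}=f(a'^{m})\leq f(a)$; now condition 5 for $({\cal{L}}(B),\lambda _{B})$ gives $\lambda _{B}(f(a))\leq \lambda _{B}(f(a'))$ and the reverse inequality, hence $\lambda _{B}(f(a))=\lambda _{B}(f(a'))$. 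So $h$ is a well-defined function, and $h\circ \lambda _{A}=\lambda _{B}\circ f$ holds by construction.

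Finally I would check that $h$ is a morphism of bounded lattices. For the constants: $h(0)=h(\lambda _{A}(0))=\lambda _{B}(f(0))=\lambda _{B}(0)=0$ by condition 3 and $f(0)=0$, and dually $h(1)=1$. For joins: $h(\lambda _{A}(a)\vee \lambda _{A}(b))=h(\lambda _{A}(a\vee b))=\lambda _{B}(f(a\vee b))=\lambda _{B}(f(a)\vee f(b))=\lambda _{B}(f(a))\vee \lambda _{B}(f(b))=h(\lambda _{A}(a))\vee h(\lambda _{A}(b))$, using condition 2 for $\lambda _{A}$, the fact that $f$ preserves $\vee $, and condition 2 for $\lambda _{B}$. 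The meet case is identical, with condition 2 replaced by property b) of Lemma \ref{`abc`}. The only obstacle of substance is the well-definedness of $h$, which hinges entirely on condition 5 of Definition \ref{reticulatia} together with $f$ preserving $\odot $ and $\leq $.
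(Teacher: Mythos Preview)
Your argument is correct. The paper does not actually supply a proof of this proposition: it is placed in the preliminaries section and is attributed to the author's earlier papers \cite{eu1}, \cite{eu2}, with the remark just before it that this is ``an alternate definition of ${\cal{L}}$, which is in accordance with the one above''. So there is no in-text proof to compare against.

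That said, your proof is exactly the natural one and matches the explicit description of ${\cal{L}}(f)$ that the paper gives in the paragraph preceding the proposition, namely ${\cal{L}}(f)(\lambda _{A}(a))=\lambda _{B}(f(a))$. Uniqueness via surjectivity of $\lambda _{A}$, well-definedness via condition~5) in both reticulations together with the fact that $f$ preserves $\odot $ and order, and the morphism verification via conditions~2), 3) and property~b) of Lemma~\ref{`abc`} are all correct and are precisely the ingredients one would expect in the cited proof.
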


\begin{definition}
With the notations in Proposition \ref{`lmorfisme`}, set ${\cal{L}}(f)=h$.\index{functor!${\cal{L}}$}\index{functor!reticulation functor}
\label{deflmorfisme}
\end{definition}

Here is the second construction of the reticulation from \cite{eu1}: let $A$ be a residuated lattice, ${\cal{PF}}(A)$ be the set of the principal filters of $A$ and $\lambda :A\rightarrow {\cal{PF}}(A)$ the function given by: for all $a\in A$, $\lambda (a)=<a>$.

\begin{theorem}{\rm \cite{eu1}} $(({\cal{PF}}(A),\cap ,\vee ,A,\{1\}),\lambda )$ is a reticulation of $A$.\index{reticulation!existence}
\label{`alternate`}
\end{theorem}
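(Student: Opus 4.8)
The plan is to verify directly the five conditions of Definition \ref{reticulatia} for the pair $(({\cal{PF}}(A),\cap ,\vee ,A,\{1\}),\lambda )$, after first checking that $({\cal{PF}}(A),\cap ,\vee ,A,\{1\})$ really is a bounded distributive lattice. The whole argument rests on two identities for principal filters: $<a>\cap <b>=<a\vee b>$, already available as Proposition \ref{`sisau`}, and $<a>\vee <b>=<a\odot b>$ (the join taken in ${\cal{F}}(A)$), which I would establish along the way.

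First I would settle the lattice structure. By Lemma \ref{principal}, $<1>=\{b\in A\mid (\exists\, n\in \N ^{*})\,1^{n}\leq b\}=\{1\}$ and $<0>=\{b\in A\mid (\exists\, n\in \N ^{*})\,0^{n}\leq b\}=A$, so $\{1\}$ and $A$ both lie in ${\cal{PF}}(A)$. Proposition \ref{`sisau`} gives $<a>\cap <b>=<a\vee b>\in {\cal{PF}}(A)$. For the filter join: since $b\leq 1=a\rightarrow a$ by Lemma \ref{calcul}(\ref{art5(iv)}), residuation gives $a\odot b=b\odot a\leq a$, and symmetrically $a\odot b\leq b$; hence $<a>,<b>\subseteq <a\odot b>$, so $<a>\vee <b>=<\{a,b\}>\subseteq <a\odot b>$, while $a\odot b\in <\{a,b\}>$ (filters are closed under $\odot $) yields the reverse inclusion, so that $<a>\vee <b>=<a\odot b>\in {\cal{PF}}(A)$. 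By Proposition \ref{latoffilt}, $({\cal{F}}(A),\vee ,\cap ,\{1\},A)$ is a distributive lattice ordered by $\subseteq $; its order-dual $({\cal{F}}(A),\cap ,\vee ,A,\{1\})$ is a bounded distributive lattice ordered by $\supseteq $, and the computations above show that ${\cal{PF}}(A)$ is a sublattice of it containing its bottom $A$ and its top $\{1\}$. Hence $({\cal{PF}}(A),\cap ,\vee ,A,\{1\})$ is a bounded distributive lattice whose order is reverse inclusion, in which $\cap $ is the join and the filter-join $\vee $ is the meet.

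Then I would check conditions 1)--5) of Definition \ref{reticulatia}. Condition 1): $\lambda (a\odot b)=<a\odot b>=<a>\vee <b>$, which is the meet of $\lambda (a)$ and $\lambda (b)$ in ${\cal{PF}}(A)$. Condition 2): $\lambda (a\vee b)=<a\vee b>=<a>\cap <b>$ by Proposition \ref{`sisau`}, which is the join of $\lambda (a)$ and $\lambda (b)$ in ${\cal{PF}}(A)$. Condition 3): $\lambda (0)=<0>=A$ and $\lambda (1)=<1>=\{1\}$, the bottom and the top of ${\cal{PF}}(A)$. Condition 4): $\lambda $ is surjective, since by construction every element of ${\cal{PF}}(A)$ is a principal filter $<a>=\lambda (a)$. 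Condition 5): as the order of ${\cal{PF}}(A)$ is $\supseteq $, we have $\lambda (a)\leq \lambda (b)$ iff $<b>\subseteq <a>$ iff $b\in <a>$ (because $<b>$ is the least filter containing $b$) iff $(\exists\, n\in \N ^{*})\,a^{n}\leq b$, the last step being Lemma \ref{principal}.

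I do not expect any genuine obstacle: once $<a>\vee <b>=<a\odot b>$ is in hand (Proposition \ref{`sisau`} supplying the companion identity for $\cap $), everything is bookkeeping. The one point that requires a little care is to respect the fact that ${\cal{PF}}(A)$ carries the \emph{reverse} inclusion order, so that intersection is its join and the filter-join its meet; this is precisely the orientation under which conditions 1)--3) and 5) come out correctly. One may add that, combined with Theorem \ref{`unicitatea`}, this realization of the reticulation is then unique up to an isomorphism compatible with $\lambda $.
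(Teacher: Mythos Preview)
Your verification is correct and is exactly the natural direct check of conditions 1)--5) from Definition \ref{reticulatia}, together with the two principal-filter identities $<a>\cap <b>=<a\vee b>$ and $<a>\vee <b>=<a\odot b>$; the care you take with the reversed order on ${\cal{PF}}(A)$ is the only subtle point and you handle it properly. Note, however, that the present paper does not actually prove this theorem: it is imported from \cite{eu1} and merely quoted here, so there is no in-paper proof to compare against; your argument is precisely the expected one and would serve as a self-contained proof.
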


Notice that the partial order relation of the lattice $({\cal{PF}}(A),\cap ,\vee ,A,\{1\})$ is $\supseteq $.

Here is an example of reticulation of a residuated lattice that we will use in the sequel:

\begin{example}
\label{ex3}
{\rm \cite{eu1}} Let $A$ be the residuated lattice in Example \ref{lrex3}. The principal filters of this residuated lattice are: $<0>=<b>=A$, $<a>=<c>=\{a,c,1\}$, $<d>=\{d,1\}$, $<1>=\{1\}$, so ${\cal{L}}(A)=$\linebreak $\{<0>,<a>,<d>,<1>\}$, with the following lattice structure:

\begin{center}
\begin{picture}(100,70)(0,0)
\put(50,11){\circle*{3}}
\put(30,31){\circle*{3}}
\put(70,31){\circle*{3}}
\put(50,51){\circle*{3}}
\put(50,11){\line(1,1){20}}
\put(50,11){\line(-1,1){20}}

\put(70,31){\line(-1,1){20}}
\put(30,31){\line(1,1){20}}

\put(37,0){$<0>$}
\put(0,28){$<a>$}
\put(75,28){$<d>$}
\put(37,56){$<1>$}

\end{picture}

\end{center}

\end{example}

Here are three preservation properties of the reticulation functor for residuated lattices. 

\begin{proposition}{\rm \cite{eu5}} ${\cal{L}}$ preserves finite direct products. More precisely, if $n\in \N ^{*}$, $A_{1},A_{2},\ldots A_{n}$ are residuated lattices, $\displaystyle A=\prod _{i=1}^{n}A_{i}$, for each $i\in \overline{1,n}$, $({\cal{L}}(A_{i}),\lambda _{i})$ is a reticulation of $A_{i}$, and $\displaystyle \lambda :A\rightarrow \prod _{i=1}^{n}{\cal{L}}(A_{i})$, for all $(a_{1},\ldots ,a_{n})\in A$, $\lambda (a_{1},\ldots ,a_{n})=(\lambda _{1}(a_{1}),\ldots ,\lambda _{n}(a_{n}))$, then $\displaystyle (\prod _{i=1}^{n}{\cal{L}}(A_{i}),\lambda )$ is a reticulation of $A$.
\label{findirprod}
\end{proposition}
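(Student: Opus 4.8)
The plan is to verify directly that the pair $\displaystyle (\prod_{i=1}^n \mathcal{L}(A_i), \lambda)$, with $\lambda$ defined componentwise, satisfies conditions 1)--5) of Definition \ref{reticulatia}; by the uniqueness part of Theorem \ref{`unicitatea`}, this suffices to identify it with the reticulation of $A = \prod_{i=1}^n A_i$. First I would recall that a finite direct product of bounded distributive lattices is again a bounded distributive lattice, so $\prod_{i=1}^n \mathcal{L}(A_i)$ is a legitimate candidate codomain. I would also recall that the lattice, monoid, and residuum operations, as well as the order, on $A = \prod_{i=1}^n A_i$ are all computed componentwise, and likewise $0 = (0,\ldots,0)$, $1 = (1,\ldots,1)$; this is what makes the componentwise verification go through coordinate by coordinate.

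Next I would check conditions 1)--4) coordinatewise: for 1), $\lambda(a \odot b) = (\lambda_1(a_1 \odot b_1), \ldots) = (\lambda_1(a_1) \wedge \lambda_1(b_1), \ldots) = \lambda(a) \wedge \lambda(b)$ using that each $\lambda_i$ satisfies 1) and that $\wedge$ on the product is componentwise; 2) is identical with $\vee$ in place of $\odot$ and $\wedge$; 3) is immediate from $\lambda_i(0) = 0$, $\lambda_i(1) = 1$ for each $i$; 4) holds because a finite product of surjections is a surjection. For condition 5), the key point is the equivalence ``$\lambda(a) \le \lambda(b)$ iff $(\exists n \in \N^*)\, a^n \le b$'' in the product. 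Here I would use that $\le$ on $\prod \mathcal{L}(A_i)$ is componentwise, so $\lambda(a) \le \lambda(b)$ iff $\lambda_i(a_i) \le \lambda_i(b_i)$ for every $i$, which by condition 5) for each $A_i$ holds iff for each $i$ there is $n_i \in \N^*$ with $a_i^{n_i} \le b_i$. Since the power operation on $A$ is componentwise, $a^n = (a_1^n, \ldots, a_n^n)$, and taking $n = \max\{n_1,\ldots,n_n\}$ (recalling that $x^{k} \le x^{m}$ for $k \ge m$ since $x^{k} = x^{m} \odot x^{k-m} \le x^{m}$, as $x^{k-m} \le 1$ and $\odot$ is order-preserving) gives $a^n \le b$ componentwise, i.e.\ $a^n \le b$ in $A$. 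The converse direction is trivial.

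The step I expect to require the most care is condition 5), specifically the passage from a separate exponent $n_i$ in each coordinate to a single uniform exponent $n$ valid in all coordinates; this is exactly where finiteness of the product is used, since the maximum of finitely many natural numbers exists. I would make explicit the monotonicity fact $a^n \le a^m$ for $n \ge m$ in a residuated lattice, which follows from $1 \in A$ being the top of the monoid and $\odot$ being isotone (a consequence of the law of residuation). Everything else is a routine coordinatewise unwinding of the fact that all operations and the order on a direct product are defined componentwise, together with the uniqueness of the reticulation from Theorem \ref{`unicitatea`}, which lets us conclude that this concretely described pair \emph{is} (a representative of) the reticulation of $A$ and hence that $\mathcal{L}$ preserves finite direct products.
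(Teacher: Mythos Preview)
Your argument is correct. The verification of conditions 1)--4) componentwise is routine, and your treatment of condition 5) is exactly the right one: the only nontrivial point is producing a single exponent that works in all coordinates simultaneously, and taking $N=\max\{n_1,\ldots,n_n\}$ together with the monotonicity $a^{N}\leq a^{n_i}$ (which you justify correctly via $a^{N}=a^{n_i}\odot a^{N-n_i}\leq a^{n_i}\odot 1=a^{n_i}$) handles this. You are also right to flag that this step is precisely where finiteness of the product is used.

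As for comparison with the paper: the proposition is stated here with a citation to \cite{eu5} and no proof is reproduced in the present paper, so there is nothing to compare against directly. Your direct verification of the axioms in Definition~\ref{reticulatia} is the natural (and essentially the only reasonable) approach; it is safe to assume the argument in \cite{eu5} proceeds the same way. One cosmetic remark: avoid overloading the letter $n$ for both the number of factors and the exponent in condition 5); writing $N=\max\{n_1,\ldots,n_n\}$ (or renaming the number of factors) keeps things unambiguous.
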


\begin{proposition}{\rm \cite{eu5}} ${\cal{L}}$ preserves quotients.\index{lattice!quotient}\index{residuated lattice!quotient} More precisely, if $A$ is a residuated lattice, $F$ is a filter of $A$, $({\cal{L}}(A),\lambda )$ is the reticulation of $A$, $({\cal{L}}(A/F),\lambda _{1})$ is the reticulation of $A/F$ and $h:{\cal{L}}(A)/\lambda (F)\rightarrow {\cal{L}}(A/F)$, for all $a\in A$, $h(\lambda (a)/\lambda (F))=\lambda _{1}(a/F)$, then $h$ is a bounded lattice isomorphism.
\label{presquot} 
\end{proposition}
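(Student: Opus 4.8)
The plan is to exhibit $({\cal{L}}(A)/\lambda(F),\psi)$ as a reticulation of $A/F$, where $\psi:A/F\rightarrow {\cal{L}}(A)/\lambda(F)$ is the map $\psi(a/F)=\lambda(a)/\lambda(F)$. Once this is established, Theorem \ref{`unicitatea`}, applied to this reticulation and to $({\cal{L}}(A/F),\lambda_{1})$, yields a unique bounded lattice isomorphism $h:{\cal{L}}(A)/\lambda(F)\rightarrow {\cal{L}}(A/F)$ with $h\circ\psi=\lambda_{1}$, that is, with $h(\lambda(a)/\lambda(F))=\lambda_{1}(a/F)$ for all $a\in A$; this is exactly the map in the statement. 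Note that $\lambda(F)$ is a filter of ${\cal{L}}(A)$ by Lemma \ref{`lambdaf`}, so the quotient bounded distributive lattice ${\cal{L}}(A)/\lambda(F)$ is well defined.

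First I would verify that $\psi$ itself is well defined. If $a/F=b/F$, then $a\leftrightarrow b\in F$, so $e:=\lambda(a\leftrightarrow b)\in\lambda(F)$; using $(a\leftrightarrow b)\odot a\leq b$ and $(a\leftrightarrow b)\odot b\leq a$ (both immediate from $a\leftrightarrow b\leq a\rightarrow b$, $a\leftrightarrow b\leq b\rightarrow a$ and the law of residuation), condition 1) of Definition \ref{reticulatia}, and the fact that $\lambda$ is order-preserving (Lemma \ref{`abc`}), one gets $e\wedge\lambda(a)\leq\lambda(b)$ and $e\wedge\lambda(b)\leq\lambda(a)$, hence $e\wedge\lambda(a)=e\wedge\lambda(b)$, so that $\lambda(a)/\lambda(F)=\lambda(b)/\lambda(F)$. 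Conditions 1)--4) of Definition \ref{reticulatia} for $\psi$ are then routine: conditions 1), 2), 3) follow from the corresponding conditions for $\lambda$ together with the fact that the canonical projection ${\cal{L}}(A)\rightarrow {\cal{L}}(A)/\lambda(F)$ is a bounded lattice morphism, and condition 4) holds because both $\lambda$ and that projection are surjective.

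The substantial step is condition 5). I would start from the description of the order on ${\cal{L}}(A)/\lambda(F)$ coming from the definition of $\equiv\ ({\rm mod}\ \lambda(F))$: for $l,m\in {\cal{L}}(A)$, $l/\lambda(F)\leq m/\lambda(F)$ iff there is $e\in\lambda(F)$ with $l\wedge e\leq m$. Writing an arbitrary element of $\lambda(F)$ as $\lambda(c)$ with $c\in F$, and using condition 1) for $\lambda$ and then condition 5) for $\lambda$, this rewrites $\psi(a/F)\leq\psi(b/F)$ as the existence of $c\in F$ and $n\in\N^{*}$ with $(a\odot c)^{n}\leq b$. It then remains to check that this is equivalent to the existence of $n\in\N^{*}$ with $(a/F)^{n}\leq b/F$, i.e., by Lemma \ref{prop-cong}(\ref{aF-leq-bF}), with $a^{n}\rightarrow b\in F$: from $(a\odot c)^{n}=a^{n}\odot c^{n}\leq b$ one gets $c^{n}\leq a^{n}\rightarrow b$ by residuation, and $c^{n}\in F$, so $a^{n}\rightarrow b\in F$; conversely, taking $c:=a^{n}\rightarrow b\in F$ one has $a^{n}\odot c\leq b$ and $c^{n}\leq c$, hence $(a\odot c)^{n}=a^{n}\odot c^{n}\leq a^{n}\odot c\leq b$.

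With conditions 1)--5) verified, $({\cal{L}}(A)/\lambda(F),\psi)$ is a reticulation of $A/F$ and the proposition follows. The step that needs care is condition 5), where the congruence on ${\cal{L}}(A)$ modulo $\lambda(F)$ must be matched with the congruence on $A$ modulo $F$; this is the only place where the filter axioms and the residuation law genuinely interact, and I expect it to be the main obstacle, everything else being bookkeeping about quotient maps.
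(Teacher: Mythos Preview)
The paper does not prove this proposition here; it is quoted from \cite{eu5} without proof. Your argument is correct and is precisely the natural one in the axiomatic framework set up in this paper: show that $({\cal{L}}(A)/\lambda(F),\psi)$ satisfies conditions 1)--5) of Definition \ref{reticulatia} for $A/F$, and then invoke Theorem \ref{`unicitatea`} to identify it with $({\cal{L}}(A/F),\lambda_{1})$ via the unique isomorphism $h$. All the verifications go through as you describe; in particular, your treatment of condition 5) --- rewriting $\lambda(a)/\lambda(F)\leq\lambda(b)/\lambda(F)$ as the existence of $c\in F$ and $n\in\N^{*}$ with $(a\odot c)^{n}\leq b$, and then matching this with $a^{n}\rightarrow b\in F$ via residuation and the closure of $F$ under $\odot$ --- is correct and is indeed the only step with real content.
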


\begin{proposition}{\rm \cite{eu5}} ${\cal{L}}$ preserves inductive limits.\index{inductive limit} More precisely, if $((A_{i})_{i\in I},(\phi _{ij})_{\stackrel{\scriptstyle i,j\in I}{\scriptstyle i\leq j}})$ is an inductive system of residuated lattices and $(A,$\linebreak $(\phi _{i})_{i\in I})$ is its inductive limit, then $({\cal{L}}(A),({\cal{L}}(\phi _{i}))_{i\in I})$ is the inductive limit of the inductive system $(({\cal{L}}(A_{i}))_{i\in I},({\cal{L}}(\phi _{ij}))_{\stackrel{\scriptstyle i,j\in I}{\scriptstyle i\leq j}})$.
\label{limind}
\end{proposition}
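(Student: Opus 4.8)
The plan is to work with the explicit constructions of inductive limits in ${\cal{RL}}$ and in ${\cal{D}}01$ recalled above. Write $({\cal{L}}(A_{i}),\lambda _{i})$ for the reticulation of $A_{i}$ and $({\cal{L}}(A),\lambda _{A})$ for the reticulation of $A$; by functoriality of ${\cal{L}}$, $(({\cal{L}}(A_{i}))_{i\in I},({\cal{L}}(\phi _{ij}))_{i\leq j})$ is an inductive system in ${\cal{D}}01$, and we let $(L,(\psi _{i})_{i\in I})$ be the inductive limit of it built exactly as in the construction above. The first step is to assemble the $\lambda _{i}$'s into one map. Since each $\lambda _{i}$ is surjective and ${\cal{L}}(\phi _{ij})\circ \lambda _{i}=\lambda _{j}\circ \phi _{ij}$ (the defining property of ${\cal{L}}(\phi _{ij})$), if $a\in A_{i}$, $b\in A_{j}$ and $a\sim b$, witnessed by $\phi _{ik}(a)=\phi _{jk}(b)$, then applying $\lambda _{k}$ gives ${\cal{L}}(\phi _{ik})(\lambda _{i}(a))={\cal{L}}(\phi _{jk})(\lambda _{j}(b))$, so $\lambda _{i}(a)$ and $\lambda _{j}(b)$ have the same image in $L$. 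Hence $\Lambda \colon A\rightarrow L$, $\Lambda ([a])=\psi _{i}(\lambda _{i}(a))$ for $a\in A_{i}$, is a well-defined surjection, and by construction $\Lambda \circ \phi _{i}=\psi _{i}\circ \lambda _{i}$ for every $i\in I$.

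The core of the argument is to check that $(L,\Lambda )$ is a reticulation of $A$, i.e.\ that $\Lambda $ satisfies conditions 1)--5) of Definition \ref{reticulatia}. Conditions 3) and 4) are immediate. For 1) and 2), given $[a],[b]\in A$ with $a\in A_{i}$, $b\in A_{j}$, pick $k$ above both $i$ and $j$ and replace $a,b$ by their images in $A_{k}$; then $[a]\odot [b]$ and $[a]\vee [b]$ are computed inside $A_{k}$, and conditions 1), 2) for $\lambda _{k}$ together with the way $\wedge $ and $\vee $ are defined on $L$ yield $\Lambda ([a]\odot [b])=\Lambda ([a])\wedge \Lambda ([b])$ and $\Lambda ([a]\vee [b])=\Lambda ([a])\vee \Lambda ([b])$. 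Condition 5) is where the real work lies, since it couples the orders of the two inductive limits with the monoid powers. Again reducing to a common index $k$ and writing $a,b\in A_{k}$, one unwinds both sides: $\Lambda ([a])\leq \Lambda ([b])$ in $L$ means $\psi _{k}(\lambda _{k}(a))\leq \psi _{k}(\lambda _{k}(b))$, i.e.\ there is $m\geq k$ with ${\cal{L}}(\phi _{km})(\lambda _{k}(a))\leq {\cal{L}}(\phi _{km})(\lambda _{k}(b))$, i.e.\ $\lambda _{m}(\phi _{km}(a))\leq \lambda _{m}(\phi _{km}(b))$, which by condition 5) for $\lambda _{m}$ means $(\phi _{km}(a))^{n}\leq \phi _{km}(b)$ in $A_{m}$ for some $n\in \N ^{*}$; on the other hand, $[a]^{n}\leq [b]$ in $A$ means there is $m\geq k$ with $\phi _{km}(a^{n})=(\phi _{km}(a))^{n}\leq \phi _{km}(b)$. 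Since the existential quantifiers over $m$ and over $n$ commute, the two statements coincide, so $\Lambda $ satisfies 5). I expect this bookkeeping around condition 5) to be the only genuine obstacle; conditions 1)--4) are a routine diagram chase once $\Lambda $ is in place.

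Finally, by the uniqueness part of Theorem \ref{`unicitatea`} there is an isomorphism of bounded lattices $\theta \colon {\cal{L}}(A)\rightarrow L$ with $\theta \circ \lambda _{A}=\Lambda $. Transporting the inductive limit $(L,(\psi _{i}))$ along $\theta ^{-1}$, we get that $({\cal{L}}(A),(\theta ^{-1}\circ \psi _{i})_{i\in I})$ is an inductive limit of $(({\cal{L}}(A_{i}))_{i\in I},({\cal{L}}(\phi _{ij})))$ in ${\cal{D}}01$; it therefore only remains to show $\theta ^{-1}\circ \psi _{i}={\cal{L}}(\phi _{i})$, equivalently $\psi _{i}=\theta \circ {\cal{L}}(\phi _{i})$. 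As $\lambda _{i}$ is surjective it suffices to verify this after precomposing with $\lambda _{i}$: one side is $\psi _{i}(\lambda _{i}(a))=\Lambda (\phi _{i}(a))$ by the identity recorded above, and the other is $\theta ({\cal{L}}(\phi _{i})(\lambda _{i}(a)))=\theta (\lambda _{A}(\phi _{i}(a)))=\Lambda (\phi _{i}(a))$, using ${\cal{L}}(\phi _{i})\circ \lambda _{i}=\lambda _{A}\circ \phi _{i}$ and $\theta \circ \lambda _{A}=\Lambda $. Hence $({\cal{L}}(A),({\cal{L}}(\phi _{i}))_{i\in I})$ is an inductive limit of $(({\cal{L}}(A_{i}))_{i\in I},({\cal{L}}(\phi _{ij}))_{i\leq j})$ (compatibility of the $({\cal{L}}(\phi _{i}))$ with the $({\cal{L}}(\phi _{ij}))$ being in any case immediate from functoriality of ${\cal{L}}$), and since inductive limits are unique up to isomorphism this is exactly the assertion.
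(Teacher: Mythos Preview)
The paper does not give a proof of this proposition: it is stated with a citation to \cite{eu5} and used as a black box in Section~\ref{hull}. So there is no proof in the paper to compare against, and your task reduces to checking whether your argument stands on its own.

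It does. Your approach is natural and sound: build the inductive limit $L$ of the $({\cal{L}}(A_{i}),{\cal{L}}(\phi _{ij}))$ explicitly, assemble the $\lambda _{i}$'s into a single map $\Lambda :A\to L$, verify that $(L,\Lambda )$ satisfies the five axioms of Definition~\ref{reticulatia}, and then invoke the uniqueness part of Theorem~\ref{`unicitatea`} to transport the cocone along the resulting isomorphism $\theta :{\cal{L}}(A)\to L$. The diagram chase for conditions 1)--4) is routine, and your handling of condition 5) --- reducing to a common index $k$, pushing forward to a further index $m$, and observing that ``$\exists m\,\exists n$'' and ``$\exists n\,\exists m$'' are interchangeable --- is exactly the point where the explicit description of the order on an inductive limit is needed, and you use it correctly. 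The final identification $\theta ^{-1}\circ \psi _{i}={\cal{L}}(\phi _{i})$ via precomposition with the surjective $\lambda _{i}$ is clean.

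One small cosmetic remark: when you say ``$\Lambda $ is a well-defined surjection'', it would not hurt to spell out surjectivity in one line (every class in $L$ is $\psi _{i}(\lambda _{i}(a))$ for some $a\in A_{i}$ by surjectivity of $\lambda _{i}$), since condition~4) is one of the five axioms you are checking.
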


\section{Co-Stone Algebras}
\label{co-Stonealgebras}

\hspace*{11pt} This section contains several properties transferred between ${\cal{D}}01$ and ${\cal{RL}}$ through ${\cal{L}}$, related to co-Stone structures.
 
Concerning co-Stone and strongly co-Stone structures (by structure we mean here bounded distributive lattice or residuated lattice), the first question that arises is whether they exist. Naturally, any strongly co-Stone structure is co-Stone and any complete co-Stone structure is strongly co-Stone. The answer to the question above is given by the fact that the trivial structure is strongly co-Stone and, moreover, any chain is strongly co-Stone, because a chain $A$ clearly has all co-annihilators equal to $\{1\}=<1>$, except for $1^{\top }$, which is equal to $A=<0>$.

Until mentioned otherwise, let $A$ be a residuated lattice and $({\cal{L}}(A),\lambda )$ its reticulation.

\begin{lemma}
For any $a\in A$, we have: $\lambda (a)=1$ iff $a=1$, and $\lambda (a)=0$ iff there exists $n\in \N ^{*}$ such that $a^{n}=0$.
\label{unu}
\end{lemma}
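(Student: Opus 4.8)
The plan is to derive both equivalences directly from conditions 3) and 5) of Definition \ref{reticulatia}, without appealing to either construction of the reticulation; in fact the whole lemma is just a convenient reformulation of the fact that $0$ and $1$ are the extremal elements of $L$ together with the ``quasi-order'' description of $\lambda$ given by condition 5).

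For the first equivalence I would start from the observation that $\lambda (1)=1$ is the greatest element of $L$ (condition 3)), so for any $a\in A$ we have $\lambda (a)=1$ if and only if $\lambda (1)\leq \lambda (a)$. Now I apply condition 5) to the pair $(1,a)$: this gives that $\lambda (1)\leq \lambda (a)$ holds exactly when there is some $n\in \N ^{*}$ with $1^{n}\leq a$. Since $1$ is the neutral element of $\odot $ we have $1^{n}=1$ for every $n\in \N ^{*}$, so this condition is simply $1\leq a$, i.e. $a=1$. Hence $\lambda (a)=1$ iff $a=1$.

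For the second equivalence I argue dually: $\lambda (0)=0$ is the least element of $L$, so $\lambda (a)=0$ if and only if $\lambda (a)\leq \lambda (0)$. Applying condition 5) to the pair $(a,0)$ shows that $\lambda (a)\leq \lambda (0)$ holds precisely when there exists $n\in \N ^{*}$ with $a^{n}\leq 0$; and since $0$ is the least element of $A$, the inequality $a^{n}\leq 0$ is equivalent to $a^{n}=0$. Therefore $\lambda (a)=0$ iff $a^{n}=0$ for some $n\in \N ^{*}$, as claimed.

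I do not expect any real obstacle here: the only small facts that need to be recorded are that $1^{n}=1$ for all $n\in \N ^{*}$ (so that powers of $1$ cause no trouble in applying condition 5)) and that an inequality $a^{n}\leq 0$ in a residuated lattice forces $a^{n}=0$. Everything else is a direct unwinding of the definition of the reticulation.
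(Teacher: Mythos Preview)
Your proof is correct and follows essentially the same approach as the paper's: both reduce $\lambda(a)=1$ and $\lambda(a)=0$ to the inequalities $\lambda(1)\leq\lambda(a)$ and $\lambda(a)\leq\lambda(0)$ via condition 3), and then unwind condition 5) together with $1^{n}=1$ and $a^{n}\leq 0\Leftrightarrow a^{n}=0$. There is no meaningful difference between the two arguments.
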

\begin{proof}
By conditions 3) and 5), we get: $\lambda (a)=1$ iff $1\leq \lambda (a)$ iff $\lambda (1)\leq \lambda (a)$ iff there exists $n\in \N ^{*}$ such that $1^{n}\leq a$ iff $1\leq a$ iff $a=1$.

Again by conditions 3) and 5), we have: $\lambda (a)=0$ iff $\lambda (a)\leq 0$ iff $\lambda (a)\leq \lambda (0)$ iff there exists $n\in \N ^{*}$ such that $a^{n}\leq 0$ iff there exists $n\in \N ^{*}$ such that $a^{n}=0$.\end{proof}

\begin{lemma}
Let $a\in A$. Then:
\begin{enumerate}
\item\label{noua1} if $a\in B(A)$, then $\lambda (a)\in B({\cal{L}}(A))$;
\item\label{noua2} $\lambda (a)\in B({\cal{L}}(A))$ iff there exists an $n\in \N ^{*}$ such that $a^{n}\in B(A)$.
\end{enumerate}
\label{noua}
\end{lemma}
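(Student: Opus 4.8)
The plan is to prove the two items separately, using the characterization of the Boolean center given in Lemma~\ref{bcomut}(\ref{B(A)=}), namely that an element $x$ of a residuated lattice lies in its Boolean center iff $x\vee\neg\,x=1$, together with the basic properties of $\lambda$. For item~(\ref{noua1}), suppose $a\in B(A)$, so that $a\vee\neg\,a=1$. Applying $\lambda$ and using condition~2) (preservation of $\vee$) together with condition~3) gives $\lambda(a)\vee\lambda(\neg\,a)=\lambda(a\vee\neg\,a)=\lambda(1)=1$. It remains to identify $\lambda(\neg\,a)$ with $\neg\,\lambda(a)$, or at least to show $\lambda(\neg\,a)\leq\neg\,\lambda(a)$ in ${\cal L}(A)$, which suffices since then $\lambda(a)\vee\neg\,\lambda(a)=1$ and Lemma~\ref{bcomut}(\ref{B(A)=}) applies to the residuated lattice ${\cal L}(A)$ (here ${\cal L}(A)$ is a bounded distributive lattice, so its Boolean center is just its set of complemented elements, and $\neg$ is pseudocomplementation; one must be slightly careful that the lemma is being quoted in the distributive-lattice setting, but the relevant fact---that $x$ is complemented iff $x\vee x^{*}=1$---is exactly the classical one recalled after the definition of pseudocomplement). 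To get $\lambda(\neg\,a)\wedge\lambda(a)=0$: since $a\odot\neg\,a=a\odot(a\rightarrow 0)\leq 0$, condition~1) yields $\lambda(a)\wedge\lambda(\neg\,a)=\lambda(a\odot\neg\,a)=\lambda(0)=0$ (using property~b) or condition~1) directly). Combining $\lambda(a)\vee\lambda(\neg\,a)=1$ and $\lambda(a)\wedge\lambda(\neg\,a)=0$ shows $\lambda(a)$ is complemented, i.e.\ $\lambda(a)\in B({\cal L}(A))$.

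For item~(\ref{noua2}), the backward direction is essentially item~(\ref{noua1}) applied to $a^n$ together with property~c) of Lemma~\ref{`abc`}: if $a^{n}\in B(A)$ for some $n\in\N^{*}$, then by part~(\ref{noua1}) $\lambda(a^{n})\in B({\cal L}(A))$, and $\lambda(a^{n})=\lambda(a)$ by property~c), so $\lambda(a)\in B({\cal L}(A))$. The forward direction is the substantive one. Assume $\lambda(a)\in B({\cal L}(A))$, so $\lambda(a)\vee\neg\,\lambda(a)=1$ in the lattice ${\cal L}(A)$. Pick $b\in A$ with $\lambda(b)=\neg\,\lambda(a)$ (using surjectivity, condition~4)). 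Then $\lambda(a)\vee\lambda(b)=1$ and $\lambda(a)\wedge\lambda(b)=0$, which by condition~2) and property~b) translate to $\lambda(a\vee b)=1$ and $\lambda(a\wedge b)=0$ (equivalently $\lambda(a\odot b)=0$). By Lemma~\ref{unu}, $\lambda(a\vee b)=1$ gives $a\vee b=1$, and $\lambda(a\odot b)=0$ gives the existence of some $k\in\N^{*}$ with $(a\odot b)^{k}=0$. Now I want to manufacture an exponent $n$ with $a^{n}\vee\neg\,a^{n}=1$, i.e.\ $a^{n}\in B(A)$ by Lemma~\ref{bcomut}(\ref{B(A)=}).

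The key computation: from $a\vee b=1$ and Lemma~\ref{calcul}(\ref{6.(iii)}) we get, for suitable exponents, $(a\vee b)^{nk}\leq a^{n}\vee b^{k}$, hence $1=(a\vee b)^{nk}\leq a^{n}\vee b^{k}$, so $a^{n}\vee b^{k}=1$ for all $n,k\in\N^{*}$. Also $b^{k}\leq$ ? --- here I must relate $b^{k}$ to $\neg\,a^{n}$. Since $a\wedge b\leq a\odot b$ is false in general, better to use: $a\odot b\le a\odot b$ and $(a\odot b)^{k}=0$ means $a^{k}\odot b^{k}\le (a\odot b)^{k}\cdot(\text{stuff})$? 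Actually the clean route is: $a\odot b\leq a\wedge b$ always fails, but $a^{k}\odot b^{k}=(a\odot b)^{k}=0$ since $\odot$ is commutative and associative, so $a^{k}\odot b^{k}=0$, which by residuation gives $b^{k}\leq a^{k}\rightarrow 0=\neg\,a^{k}$. Therefore $1=a^{k}\vee b^{k}\leq a^{k}\vee\neg\,a^{k}$, so $a^{k}\vee\neg\,a^{k}=1$, i.e.\ $a^{k}\in B(A)$ by Lemma~\ref{bcomut}(\ref{B(A)=}). Taking $n=k$ finishes the forward direction.

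\textbf{Main obstacle.} The one point requiring care is the identification, in the lattice ${\cal L}(A)$, of the pseudocomplement $\neg\,\lambda(a)$ (in the sense used by $B({\cal L}(A))$) with a value $\lambda(b)$, and making sure one is allowed to write the equivalence ``$x\in B(L)$ iff $x\vee\neg\,x=1$'' for a bounded distributive lattice $L$: this is the classical characterization of complemented elements via pseudocomplements, valid because in a distributive lattice a complement, when it exists, coincides with the pseudocomplement, as recalled in the preliminaries. Everything else is a routine transcription through conditions 1)--5) and properties a)--c), together with Lemma~\ref{unu}, Lemma~\ref{calcul}, and Lemma~\ref{bcomut}(\ref{B(A)=}).
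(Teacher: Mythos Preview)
Your proof is correct and follows essentially the same route as the paper's. The only cosmetic differences are that the paper works directly with complements in ${\cal L}(A)$ (never writing $\neg$ or $^{*}$ on the lattice side, so your detour through pseudocomplements is unnecessary), and in the final step of~(\ref{noua2}) the paper exhibits $b^{n}$ itself as the complement of $a^{n}$ via Lemma~\ref{calcul}(\ref{6.(ii)}) rather than passing through $b^{k}\leq \neg\,a^{k}$ and Lemma~\ref{bcomut}(\ref{B(A)=}).
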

\begin{proof}
\noindent (\ref{noua1}) By properties 2), b) and 3).

\noindent (\ref{noua2}) If, for a certain $n\in \N ^{*}$, $a^{n}\in B(A)$, then, by (\ref{noua1}) and property c), $\lambda (a)=\lambda (a^{n})\in B({\cal{L}}(A))$.

If $\lambda (a)\in B({\cal{L}}(A))$, then, by condition 4), there exists $b\in A$ such that $\lambda (a)\vee \lambda (b)=1$ and $\lambda (a)\wedge \lambda (b)=0$. Using conditions 2) and 1), Lemma \ref{unu} and Lemma \ref{calcul}, (\ref{6.(iii)}) and (\ref{6.(ii)}), we find that this is equivalent to $\lambda (a\vee b)=1$ and $\lambda (a\odot b)=0$, which in turn is equivalent to $a\vee b=1$ and $(a\odot b)^{n}=0$ for some $n\in \N ^{*}$, hence $a^{n}\vee b^{n}\geq 1^{n}=1$ and $a^{n}\odot b^{n}=0$, so $a^{n}\vee b^{n}=1$ and $a^{n}\odot b^{n}=0$, so $a^{n}\wedge b^{n}=0$. Hence $a^{n}\in B(A)$.\end{proof}

\begin{proposition}
$\lambda \mid _{B(A)}:B(A)\rightarrow B({\cal{L}}(A))$ is an isomorphism of Boolean algebras.
\label{izomb}
\end{proposition}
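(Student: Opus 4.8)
The plan is to show that $\lambda\mid_{B(A)}$ is a bijective homomorphism of Boolean algebras. We already know from Lemma~\ref{noua}(\ref{noua1}) that $\lambda$ maps $B(A)$ into $B({\cal{L}}(A))$, so the map is well defined. First I would check that it is a Boolean-algebra homomorphism: for $e,f\in B(A)$ we have $\lambda(e\vee f)=\lambda(e)\vee\lambda(f)$ by condition 2); $\lambda(e\wedge f)=\lambda(e)\wedge\lambda(f)$ by property b) (or, using Lemma~\ref{bcomut}(\ref{BA-e-f}), $e\wedge f=e\odot f$, so this also follows from condition 1)); $\lambda(0)=0$ and $\lambda(1)=1$ by condition 3); and complementation is preserved because the complement of $e$ in $B(A)$ is $\neg\, e$ while the complement of $\lambda(e)$ in $B({\cal{L}}(A))$ is the unique $y$ with $\lambda(e)\vee y=1$, $\lambda(e)\wedge y=0$ — and $y=\lambda(\neg\, e)$ works since $e\vee\neg\, e=1$ (Lemma~\ref{bcomut}(\ref{B(A)=})) forces $\lambda(e)\vee\lambda(\neg\, e)=1$, and then $e\odot\neg\, e=e\wedge\neg\, e$ by Lemma~\ref{calcul}(\ref{6.(ii)}), while $\neg\, e=e\to 0$ gives $e\odot\neg\, e\le 0$, so $\lambda(e)\wedge\lambda(\neg\, e)=\lambda(e\odot\neg\, e)=0$ by condition 1) and $\lambda(0)=0$.

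Next I would prove injectivity. Suppose $e,f\in B(A)$ with $\lambda(e)=\lambda(f)$. By condition 5) applied both ways, there exist $n,k\in\N^{*}$ with $e^{n}\le f$ and $f^{k}\le e$. But idempotency of Boolean elements, Lemma~\ref{bcomut}(\ref{e-idempotent}), gives $e^{n}=e$ and $f^{k}=f$, hence $e\le f$ and $f\le e$, so $e=f$. (Alternatively one invokes Lemma~\ref{calcul}(\ref{art5(iv)}): $\lambda(e)=\lambda(f)$ yields $e\leftrightarrow f\in$ the relevant filter, but the first argument is cleaner.)

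Finally, surjectivity. Let $y\in B({\cal{L}}(A))$. Since $\lambda$ is surjective (condition 4)), pick $a\in A$ with $\lambda(a)=y$. Then $\lambda(a)\in B({\cal{L}}(A))$, so by Lemma~\ref{noua}(\ref{noua2}) there is $n\in\N^{*}$ with $a^{n}\in B(A)$; and by property c), $\lambda(a^{n})=\lambda(a)=y$. Thus $a^{n}\in B(A)$ is a preimage of $y$, proving surjectivity. Combining the three facts, $\lambda\mid_{B(A)}$ is a bijective Boolean homomorphism, hence an isomorphism of Boolean algebras. The only mildly delicate point is the treatment of complementation, where one must remember that in a residuated lattice the Boolean complement of $e$ is $\neg\, e$ and use Lemma~\ref{calcul}(\ref{6.(ii)}) to pass between $\odot$ and $\wedge$; everything else is routine given the lemmas already established.
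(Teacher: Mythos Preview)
Your proof is correct and follows essentially the same approach as the paper: well-definedness via Lemma~\ref{noua}(\ref{noua1}), the homomorphism property from conditions 2), 3) and property b), injectivity from condition 5) combined with idempotency of Boolean elements (Lemma~\ref{bcomut}(\ref{e-idempotent})), and surjectivity from condition 4), Lemma~\ref{noua}(\ref{noua2}) and property c). Your treatment of complementation is actually more explicit than the paper's, which simply asserts that ``properties 2), b) and 3) imply that $\lambda$ also preserves the complement''; otherwise the arguments are identical.
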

\begin{proof}
By Lemma \ref{noua}, (\ref{noua1}), for all $a\in B(A)$, $\lambda (a)\in B({\cal{L}}(A))$. Properties 2), b) and 3) imply that $\lambda $ also preserves the complement, hence it is a Boolean morphism. Let $a,b\in B(A)$ such that $\lambda (a)=\lambda (b)$. By property 5) and Lemma \ref{bcomut}, (\ref{e-idempotent}), $\lambda (a)=\lambda (b)$ iff $\lambda (a)\leq \lambda (b)$ and $\lambda (b)\leq \lambda (a)$ iff $a^{n}\leq b$ and $b^{k}\leq a$ for some $n,k\in \N ^{*}$ iff $a\leq b$ and $b\leq a$ iff $a=b$. Therefore $\lambda \mid _{B(A)}$ is injective. Let $f\in B({\cal{L}}(A))$. By condition 4), there exists $a\in A$ such that $\lambda (a)=f$. By Lemma \ref{noua}, (\ref{noua2}), there exists an $n\in \N ^{*}$ such that $a^{n}\in B(A)$, and $\lambda (a^{n})=\lambda (a)=f$, by property c), so $\lambda \mid _{B(A)}:B(A)\rightarrow B({\cal{L}}(A))$ is also surjective.\end{proof}

\begin{remark}
For any subset $X$ of $A$, $\lambda (X^{\top })=\lambda (X)^{\top }$.
\label{comuttop}
\end{remark}

\begin{proof}
By conditions 4) and 2) and Lemma \ref{unu}, we have: $\lambda (X)^{\top }=\{\lambda (a)|a\in A,(\forall x\in X)\lambda (a)\vee \lambda (x)=1\}=\{\lambda (a)|a\in A,(\forall x\in X)\lambda (a\vee x)=1\}=\{\lambda (a)|a\in A,(\forall x\in X)a\vee x=1\}=\lambda (X^{\top })$.\end{proof}

\begin{proposition}
$A$ is a co-Stone residuated lattice\index{residuated lattice!co-Stone} iff ${\cal{L}}(A)$ is a co-Stone lattice.\index{lattice!co-Stone}
\label{co-Stoneiff}

\end{proposition}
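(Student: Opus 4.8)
The plan is to transport co-annihilators back and forth along $\lambda$, exploiting two facts already established: that $\lambda$ commutes with the co-annihilator operation (Remark \ref{comuttop}) and that it induces an isomorphism on Boolean centers (Proposition \ref{izomb}), together with the fact that $\lambda$ is injective on filters (Lemma \ref{lambdaegal}). Both implications then reduce to short diagram chases.

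For the forward implication I would assume $A$ is co-Stone and fix an arbitrary $y\in{\cal{L}}(A)$. By surjectivity of $\lambda$ (condition 4)) pick $x\in A$ with $\lambda(x)=y$. Co-Stoneness of $A$ yields $e\in B(A)$ with $x^{\top}=<e>$. Applying $\lambda$, using Remark \ref{comuttop} with $X=\{x\}$ on one side and Remark \ref{lambdagen} on the other, I get
\[
y^{\top}=\lambda(x)^{\top}=\lambda(x^{\top})=\lambda(<e>)=<\lambda(e)>,
\]
and $\lambda(e)\in B({\cal{L}}(A))$ by Proposition \ref{izomb} (or Lemma \ref{noua}, (\ref{noua1})). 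Hence ${\cal{L}}(A)$ is co-Stone.

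For the converse I would assume ${\cal{L}}(A)$ is co-Stone and fix $x\in A$. Applying co-Stoneness of ${\cal{L}}(A)$ to $\lambda(x)$ produces $f\in B({\cal{L}}(A))$ with $\lambda(x)^{\top}=<f>$. Since $\lambda\mid_{B(A)}$ maps onto $B({\cal{L}}(A))$ (Proposition \ref{izomb}), choose $e\in B(A)$ with $\lambda(e)=f$. The same computation as above gives $\lambda(x^{\top})=\lambda(x)^{\top}=<f>=<\lambda(e)>=\lambda(<e>)$. Now $x^{\top}$ is a filter of $A$ by Proposition \ref{topfiltru}, and $<e>$ is a (principal) filter of $A$, so Lemma \ref{lambdaegal} upgrades the equality of $\lambda$-images to $x^{\top}=<e>$, proving $A$ co-Stone.

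The only delicate point — and the step I would check most carefully — is this last upgrade in the converse direction: one genuinely needs that $\lambda(F)=\lambda(G)$ forces $F=G$ for filters, i.e. Lemma \ref{lambdaegal} (equivalently the injectivity of $\mu$ in Proposition \ref{`izomf`}); merely matching the $\lambda$-images of $x^{\top}$ and $<e>$ would not be enough on its own. Everything else is routine bookkeeping with the reticulation axioms and the identity $\lambda(X^{\top})=\lambda(X)^{\top}$.
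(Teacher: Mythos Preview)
Your proof is correct and follows essentially the same route as the paper's: both directions hinge on Remark \ref{comuttop}, Remark \ref{lambdagen}, and Lemma \ref{lambdaegal}. The only cosmetic difference is that in the converse you invoke the surjectivity of $\lambda\mid_{B(A)}$ from Proposition \ref{izomb} directly to obtain $e\in B(A)$, whereas the paper first picks any preimage $e\in A$ of $f$ and then uses Lemma \ref{noua}, (\ref{noua2}) to pass to a power $e^{n}\in B(A)$ --- but since Proposition \ref{izomb} is itself proved via Lemma \ref{noua}, (\ref{noua2}), the two arguments are the same underneath.
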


\begin{proof}
Assume that $A$ is a co-Stone residuated lattice and let $l\in {\cal{L}}(A)$. $\lambda $ is surjective, hence there exists $a\in A$ with $\lambda (a)=l$. By Definition \ref{co-Stone}, there exists $e\in B(A)$ such that $a^{\top }=<e>$. By Lemma \ref{noua}, (\ref{noua1}), $\lambda (e)\in B({\cal{L}}(A))$. By Remarks \ref{comuttop} and \ref{lambdagen}, $l^{\top }=\lambda (a)^{\top }=\lambda (a^{\top })=\lambda (<e>)=<\lambda (e)>$. Therefore ${\cal{L}}(A)$ is a co-Stone lattice.

Now conversely: assume that ${\cal{L}}(A)$ is a co-Stone lattice and let $a\in A$. By Definition \ref{co-Stone}, the surjectivity of $\lambda $ and Remark \ref{comuttop}, there exists $e\in A$, such that $\lambda (e)\in B({\cal{L}}(A))$ and $\lambda (a^{\top })=\lambda (a)^{\top }=<\lambda (e)>$. By Lemma \ref{noua}, (\ref{noua2}), there exists an $n\in \N ^{*}$ such that $e^{n}\in B(A)$. By property c) and Remark \ref{lambdagen}, $\lambda (a^{\top })=<\lambda (e)>=<\lambda (e^{n})>=\lambda (<e^{n}>)$. By Proposition \ref{topfiltru} and Lemma \ref{lambdaegal}, we get $a^{\top }=<e^{n}>$. So $A$ is a co-Stone residuated lattice.\end{proof}

\begin{proposition}
$A$ is a strongly co-Stone residuated lattice\index{residuated lattice!strongly co-Stone}  iff ${\cal{L}}(A)$ is a strongly co-Stone lattice.\index{lattice!strongly co-Stone}
\label{stronglyco-Stoneiff}
\end{proposition}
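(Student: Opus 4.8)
The plan is to rerun the argument of Proposition \ref{co-Stoneiff} verbatim with an arbitrary subset in place of a single element; the only new ingredient needed is that co-annihilators of \emph{sets} transfer across $\lambda$ just as well as co-annihilators of elements, and this is precisely Remark \ref{comuttop}. So the proof will split into the two implications, each built from: (a) surjectivity of $\lambda$ to move a prescribed subset from one side to the other via full preimages; (b) Remark \ref{comuttop} together with Remark \ref{lambdagen}; (c) Proposition \ref{izomb} (and/or Lemma \ref{noua}) to match a Boolean witness across $\lambda$; and (d) Lemma \ref{lambdaegal} and Proposition \ref{topfiltru} to cancel $\lambda$ on filters in the backward direction.

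For the direct implication, I would assume $A$ is strongly co-Stone and take an arbitrary $Y\subseteq {\cal L}(A)$. Since $\lambda $ is surjective, $\lambda(\lambda^{-1}(Y))=Y$, so I set $X=\lambda^{-1}(Y)\subseteq A$. Definition \ref{co-Stone} gives $e\in B(A)$ with $X^{\top}=<e>$, and Lemma \ref{noua}, (\ref{noua1}), gives $\lambda(e)\in B({\cal L}(A))$; then by Remark \ref{comuttop} and Remark \ref{lambdagen},
\[Y^{\top}=\lambda(X)^{\top}=\lambda(X^{\top})=\lambda(<e>)=<\lambda(e)>,\]
so ${\cal L}(A)$ is strongly co-Stone.

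For the converse, I would assume ${\cal L}(A)$ is strongly co-Stone and take an arbitrary $X\subseteq A$. Applying the hypothesis to $\lambda(X)\subseteq {\cal L}(A)$ produces $f\in B({\cal L}(A))$ with $\lambda(X)^{\top}=<f>$. By Proposition \ref{izomb} the map $\lambda\mid_{B(A)}$ is onto $B({\cal L}(A))$, so I may choose $e\in B(A)$ with $\lambda(e)=f$ (alternatively, invoke Lemma \ref{noua}, (\ref{noua2}), and replace a chosen preimage by a suitable power, using property c)). Then, again by Remark \ref{comuttop} and Remark \ref{lambdagen},
\[\lambda(X^{\top})=\lambda(X)^{\top}=<f>=<\lambda(e)>=\lambda(<e>),\]
and since $X^{\top}$ is a filter by Proposition \ref{topfiltru} while $<e>$ is a filter, Lemma \ref{lambdaegal} yields $X^{\top}=<e>$; hence $A$ is strongly co-Stone.

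I do not expect a genuine obstacle here: the whole argument is mechanical once Remark \ref{comuttop} is in hand, since the equality $\lambda(X^{\top})=\lambda(X)^{\top}$ plays exactly the role that the element-wise identity $\lambda(a^{\top})=\lambda(a)^{\top}$ played in Proposition \ref{co-Stoneiff}. The only two points requiring (mild) care are the passage of a prescribed subset between $A$ and ${\cal L}(A)$, dealt with by taking full $\lambda$-preimages, and the transfer of the Boolean witness, dealt with by Proposition \ref{izomb}; if one wants to avoid any appeal to choice it is cleanest to use the full preimage $X=\lambda^{-1}(Y)$ as above rather than a selection of preimages element by element.
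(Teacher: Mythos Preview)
Your proposal is correct and matches the paper's approach exactly: the paper's proof is simply ``Similar to the proof of Proposition \ref{co-Stoneiff}'', and what you have written is precisely that similarity spelled out, replacing single elements by arbitrary subsets and invoking Remark \ref{comuttop} (which is already stated for subsets) together with Remark \ref{lambdagen}, Lemma \ref{noua}/Proposition \ref{izomb}, Proposition \ref{topfiltru}, and Lemma \ref{lambdaegal}. The only cosmetic difference is that in the converse direction the paper's argument for Proposition \ref{co-Stoneiff} uses Lemma \ref{noua}, (\ref{noua2}), to pass to a power $e^{n}\in B(A)$, whereas you note that one may equivalently invoke the surjectivity of $\lambda\mid_{B(A)}$ from Proposition \ref{izomb}; both are fine.
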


\begin{proof}
Similar to the proof of Proposition \ref{co-Stoneiff}.\end{proof}

\begin{proposition}
Let $A$ be a residuated lattice. Then ${\rm CoAnn}(A)$\index{${\rm CoAnn}(A)$} and ${\rm CoAnn}({\cal{L}}(A))$ are isomorphic Boolean algebras.
\label{izommu}
\end{proposition}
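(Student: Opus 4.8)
The plan is to show that the reticulation map itself carries out the isomorphism, by restricting the filter-lattice isomorphism $\mu :{\cal{F}}(A)\rightarrow {\cal{F}}({\cal{L}}(A))$ of Proposition \ref{`izomf`} to the sub-posets of co-annihilators. Concretely, define $\nu :{\rm CoAnn}(A)\rightarrow {\rm CoAnn}({\cal{L}}(A))$ by $\nu (F)=\mu (F)=\lambda (F)$ for every $F\in {\rm CoAnn}(A)$. I would then verify that $\nu $ is a well-defined bijection preserving $\cap $, $^{\top }$, $\{1\}$ and the top element; since the join $\vee ^{\top }$ is, by its very definition, expressible through $\cap $ and $^{\top }$ (namely $F\vee ^{\top }G=(F^{\top }\cap G^{\top })^{\top }$), this already makes $\nu $ a Boolean algebra isomorphism between the two complete Boolean algebras furnished by Proposition \ref{CoAnnbool}.

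First, $\nu $ does land in ${\rm CoAnn}({\cal{L}}(A))$: if $F=X^{\top }$ for some $X\subseteq A$, then Remark \ref{comuttop} gives $\nu (F)=\lambda (X^{\top })=\lambda (X)^{\top }\in {\rm CoAnn}({\cal{L}}(A))$. Nothing has to be checked for well-definedness, because $\nu $ is just the set function $F\mapsto \lambda (F)$, which is already known to be injective by Proposition \ref{`izomf`} (indeed by Lemma \ref{lambdaegal}). For surjectivity, take $Y^{\top }\in {\rm CoAnn}({\cal{L}}(A))$ with $Y\subseteq {\cal{L}}(A)$; since $\lambda $ is surjective we may write $Y=\lambda (X)$ with $X=\lambda ^{-1}(Y)\subseteq A$, and then Remark \ref{comuttop} gives $Y^{\top }=\lambda (X)^{\top }=\lambda (X^{\top })=\nu (X^{\top })$ with $X^{\top }\in {\rm CoAnn}(A)$. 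Hence $\nu $ is a bijection.

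It remains to check that $\nu $ transports the Boolean operations. Preservation of $\cap $, of the bottom $\{1\}$ and of the top ($\nu (A)=\lambda (A)={\cal{L}}(A)$, by surjectivity) is immediate from Proposition \ref{`izomf`}, since in each of the two Boolean algebras these are simply the operations inherited from the filter lattice, one remaining inside ${\rm CoAnn}$ via $X^{\top }\cap Y^{\top }=(X\cup Y)^{\top }$. The one point needing a little care --- the only place where the specifically Boolean, rather than merely lattice-theoretic, structure is touched --- is preservation of the complement $^{\top }$: applying Remark \ref{comuttop} a second time, now to the subset $F$ of $A$, yields $\nu (F^{\top })=\lambda (F^{\top })=\lambda (F)^{\top }=\nu (F)^{\top }$. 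Combining this with preservation of $\cap $ and the definition of $\vee ^{\top }$ gives $\nu (F\vee ^{\top }G)=(\nu (F)^{\top }\cap \nu (G)^{\top })^{\top }=\nu (F)\vee ^{\top }\nu (G)$, so $\nu $ is the required Boolean isomorphism. I do not anticipate a genuine obstacle: the whole argument rests on Remark \ref{comuttop} (the commutation $\lambda (X^{\top })=\lambda (X)^{\top }$) together with the surjectivity of $\lambda $ and the already-established isomorphism $\mu $, the only mild subtlety being the double invocation of Remark \ref{comuttop}.
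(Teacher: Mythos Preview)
Your proposal is correct and follows essentially the same argument as the paper: the paper defines the very same map $F\mapsto \lambda (F)$ (it calls it $\mu$, overloading the notation from Proposition \ref{`izomf`}), invokes Proposition \ref{`izomf`} and Remark \ref{comuttop} to get injectivity and preservation of $\cap$, the bounds and the complement, and then proves surjectivity exactly as you do via the surjectivity of $\lambda$. Your write-up is slightly more explicit about why the map lands in ${\rm CoAnn}({\cal{L}}(A))$ and why $\vee ^{\top}$ is automatically preserved, but the content is the same.
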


\begin{proof}
Let $({\cal{L}}(A),\lambda )$ be the reticulation of $A$ and $\mu :{\rm CoAnn}(A)\rightarrow $\linebreak ${\rm CoAnn}({\cal{L}}(A))$, for all $F\in {\rm CoAnn}(A)$, $\mu (F)=\lambda (F)$. Proposition \ref{`izomf`} and Remark \ref{comuttop} show that $\mu $ is injective, preserves the intersection, the first and the last element and the complement of ${\rm CoAnn}(A)$, hence it is an injective morphism of Boolean algebras. For all $F\in {\rm CoAnn}({\cal{L}}(A))$, there exists $X\subseteq {\cal{L}}(A)$ such that $F=X^{\top }$. By the surjectivity of $\lambda $, there exists $Y\subseteq A$ such that $\lambda (Y)=X$. $Y^{\top }\in {\rm CoAnn}(A)$ and, by Remark \ref{comuttop}, $\mu (Y^{\top })=\lambda (Y^{\top })=\lambda (Y)^{\top }=X^{\top }=F$. So $\mu $ is also surjective, hence it is a Boolean isomorphism.\end{proof}

\begin{corollary}

With the notations in the proof of Proposition \ref{izommu}, for all $F\in {\rm CoAnn}({\cal{L}}(A))$, $\mu ^{-1}(F^{\top })=\mu ^{-1}(F)^{\top }$.
\label{izommu-1}
\end{corollary}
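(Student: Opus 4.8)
The plan is to exploit the fact, already essentially recorded in the proof of Proposition \ref{izommu}, that $\mu$ is not merely a bijection but a morphism of Boolean algebras, together with the observation that in both ${\rm CoAnn}(A)$ and ${\rm CoAnn}({\cal L}(A))$ the unary operation $^{\top }$ \emph{is} the Boolean complement. The latter is exactly the content of Proposition \ref{CoAnnbool}, which presents these structures as the Boolean algebras $({\rm CoAnn}(A),\vee ^{\top },\cap ,^{\top },\{1\},A)$ and $({\rm CoAnn}({\cal L}(A)),\vee ^{\top },\cap ,^{\top },\{1\},{\cal L}(A))$, with $^{\top }$ in the complementation slot. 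So the corollary is just the statement that the inverse of a Boolean isomorphism preserves complements.

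Concretely, I would first recall that $\mu $ is a Boolean algebra isomorphism by Proposition \ref{izommu} (its being complement-preserving was already verified there via Proposition \ref{`izomf`} and Remark \ref{comuttop}). Hence $\mu ^{-1}:{\rm CoAnn}({\cal L}(A))\rightarrow {\rm CoAnn}(A)$ is likewise a Boolean algebra isomorphism, and in particular commutes with the complement operation of the two Boolean algebras, which by Proposition \ref{CoAnnbool} is $^{\top }$ on each side. This immediately gives $\mu ^{-1}(F^{\top })=\mu ^{-1}(F)^{\top }$ for every $F\in {\rm CoAnn}({\cal L}(A))$.

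If one prefers to avoid invoking "the inverse of a Boolean isomorphism is a Boolean isomorphism'' and argue directly through $\mu $: fix $F\in {\rm CoAnn}({\cal L}(A))$ and put $G=\mu ^{-1}(F)\in {\rm CoAnn}(A)$, so that $\mu (G)=F$. Since ${\rm CoAnn}(A)$ is closed under $^{\top }$ (again Proposition \ref{CoAnnbool}), $G^{\top }\in {\rm CoAnn}(A)$, and because $\mu $ preserves the complement we get $\mu (G^{\top })=\mu (G)^{\top }=F^{\top }$. Applying $\mu ^{-1}$ yields $G^{\top }=\mu ^{-1}(F^{\top })$, i.e. $\mu ^{-1}(F)^{\top }=\mu ^{-1}(F^{\top })$, as desired.

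There is no real obstacle here; the statement is a one-line consequence of Propositions \ref{izommu} and \ref{CoAnnbool}. The only point that deserves a word of care is making explicit that $^{\top }$ is the Boolean complement in the ${\rm CoAnn}$-algebras (so that "$\mu $ preserves the complement'' literally means $\mu (G^{\top })=\mu (G)^{\top }$), and that $G^{\top }$ indeed lies in the domain ${\rm CoAnn}(A)$ so that $\mu ^{-1}$ may be applied to it.
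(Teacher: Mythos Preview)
Your proof is correct and is essentially the same as the paper's, which simply cites Remark \ref{comuttop}. Your ``direct'' argument unpacks precisely what that one-line reference means (since $\mu =\lambda $ on filters, Remark \ref{comuttop} gives $\mu (G^{\top })=\mu (G)^{\top }$, and then apply $\mu ^{-1}$), while your first argument via ``inverse of a Boolean isomorphism'' is just a repackaging that still rests on Remark \ref{comuttop}, as that is how complement-preservation of $\mu $ was established in Proposition \ref{izommu}.
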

\begin{proof}

By Remark \ref{comuttop}.\end{proof}

\begin{theorem}

Let $A$ be a residuated lattice and $m$ an infinite cardinal. Then the following are equivalent:

\noindent (I) for each non-empty subset $X$ of $A$ with $|X|\leq m$, there exists an element $e\in B(A)$ such that $X^{\top }=<e>$;

\noindent (II) $A$ is a co-Stone residuated lattice and $B(A)$ is an $m$-complete Boolean algebra;

\noindent (III) $A_{\top \top }=\{a^{\top \top }|a\in A\}$ is an $m$-complete Boolean sublattice of ${\cal{F}}(A)$;

\noindent (IV) for all $a,b\in A$, $(a\vee b)^{\top }=a^{\top }\vee b^{\top }$ and, for each non-empty subset $X$ of $A$ with $|X|\leq m$, there exists an element $x\in A$ such that $X^{\top \top}=x^{\top }$;

\noindent (V) for each non-empty subset $X$ of $A$ with $|X|\leq m$, $X^{\top }\vee X^{\top \top }=A$.
\label{caractlrco-Stone}
\end{theorem}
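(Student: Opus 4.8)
The plan is to transfer Theorem \ref{caractlco-Stone} from ${\cal{D}}01$ to ${\cal{RL}}$ through the reticulation functor. Writing $L={\cal{L}}(A)$ and fixing the reticulation $(L,\lambda )$ of $A$, I would prove that condition (I) (respectively (II), (III), (IV), (V)) holds for $A$ if and only if the corresponding condition (i) (respectively (ii), (iii), (iv), (v)) of Theorem \ref{caractlco-Stone} holds for $L$; since the latter five are equivalent by Theorem \ref{caractlco-Stone}, the former five will be equivalent as well.

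The tools are already in place. Remark \ref{comuttop} gives $\lambda (X^{\top })=\lambda (X)^{\top }$ for every $X\subseteq A$, hence also $\lambda (X^{\top \top })=\lambda (X)^{\top \top }$; Remark \ref{lambdagen} gives $\lambda (<a>)=<\lambda (a)>$; Proposition \ref{izomb} says $\lambda \mid _{B(A)}:B(A)\to B(L)$ is a Boolean isomorphism; Proposition \ref{`izomf`} says $\mu :{\cal{F}}(A)\to {\cal{F}}(L)$, $\mu (F)=\lambda (F)$, is a bounded lattice isomorphism, which is automatically an isomorphism of complete lattices since ${\cal{F}}(A)$ and ${\cal{F}}(L)$ are complete (Proposition \ref{latoffilt}), so it preserves the filter operation $\vee $ and the top element ($A\mapsto L$); Lemma \ref{noua}, (\ref{noua2}) lets one replace a $b\in A$ with $\lambda (b)\in B(L)$ by a power $b^{n}\in B(A)$ having the same $\lambda $-image; and Lemma \ref{lambdaegal} lets one cancel $\lambda $ on filters, which, together with Proposition \ref{topfiltru} (co-annihilators and their iterates are filters) and Remark \ref{lambdagen}, converts an identity about $\lambda $-images back into an identity in $A$. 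Finally, the one cardinality point to keep in mind is that $\lambda $ sends a non-empty $X\subseteq A$ with $|X|\leq m$ to a non-empty $\lambda (X)\subseteq L$ with $|\lambda (X)|\leq m$, and, conversely, every non-empty $Y\subseteq L$ with $|Y|\leq m$ equals $\lambda (X)$ for some non-empty $X\subseteq A$ with $|X|\leq m$ (pick one $\lambda $-preimage of each element of $Y$).

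With these, each equivalence is a direct translation. For (I) $\Leftrightarrow$ (i) one argues exactly as in the proof of Proposition \ref{co-Stoneiff}, but with subsets of size $\leq m$ in place of single elements: from $X^{\top }=<e>$ one gets $\lambda (X)^{\top }=\lambda (X^{\top })=\lambda (<e>)=<\lambda (e)>$ with $\lambda (e)\in B(L)$; conversely, from $\lambda (X^{\top })=\lambda (X)^{\top }=<f>$ with $f\in B(L)$ one chooses $e'\in A$ with $\lambda (e')=f$, replaces $e'$ by $e=(e')^{n}\in B(A)$ via Lemma \ref{noua}, (\ref{noua2}), and cancels $\lambda $ on the filters $X^{\top }$ and $<e>$ to conclude $X^{\top }=<e>$. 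For (II) $\Leftrightarrow$ (ii), ``co-Stone'' transfers by Proposition \ref{co-Stoneiff} and ``$m$-complete Boolean algebra'' transfers along the Boolean isomorphism of Proposition \ref{izomb}. For (III) $\Leftrightarrow$ (iii), $\mu $ maps $A_{\top \top }=\{a^{\top \top }|a\in A\}$ bijectively onto $L_{\top \top }=\{l^{\top \top }|l\in L\}$ (since $\mu (a^{\top \top })=\lambda (a)^{\top \top }$ and $\lambda $ is surjective), and being a complete lattice isomorphism it transfers the property of being an $m$-complete Boolean sublattice. For (IV) $\Leftrightarrow$ (iv) and (V) $\Leftrightarrow$ (v), one applies $\mu $ to the asserted identities in ${\cal{F}}(A)$, uses Remark \ref{comuttop} to rewrite the images as the corresponding identities in ${\cal{F}}(L)$, and uses surjectivity of $\lambda $ to let $a,b$ and $X$ range over all of $L$ (respectively over all non-empty subsets of $L$ of size $\leq m$); the ``there exists $x\in A$ with $X^{\top \top }=x^{\top }$'' clause of (IV) is handled by the same $\lambda $-cancellation as in (I).

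The main obstacle is not conceptual but bookkeeping: one must check that every object to which Lemma \ref{lambdaegal} ($\lambda $-cancellation) is applied really is a filter — this is why Proposition \ref{topfiltru} is invoked for $X^{\top }$, $X^{\top \top }$, $x^{\top }$, and Remark \ref{lambdagen} for the principal filters $<e>$ — and that the ``$m$-complete Boolean sublattice'' clause in (III) is a purely lattice-theoretic property of the inclusion $A_{\top \top }\subseteq {\cal{F}}(A)$, so that it genuinely passes across the isomorphism $\mu $. Everything else is mechanical, via the dictionary $\lambda (X^{\top })=\lambda (X)^{\top }$, $\lambda (<a>)=<\lambda (a)>$, $B(A)\cong B(L)$, ${\cal{F}}(A)\cong {\cal{F}}(L)$.
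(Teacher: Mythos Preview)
Your proposal is correct and follows essentially the same route as the paper: reduce each of (I)--(V) to the corresponding condition (i)--(v) of Theorem \ref{caractlco-Stone} for $L={\cal{L}}(A)$ via the dictionary $\lambda (X^{\top })=\lambda (X)^{\top }$, $\lambda (<a>)=<\lambda (a)>$, $B(A)\cong B(L)$, ${\cal{F}}(A)\cong {\cal{F}}(L)$, together with the cardinality bookkeeping on preimages. The paper carries out exactly these five equivalences with the same tools (Remarks \ref{comuttop} and \ref{lambdagen}, Lemmas \ref{noua} and \ref{lambdaegal}, Propositions \ref{topfiltru}, \ref{izomb}, \ref{`izomf`}, \ref{co-Stoneiff}); your observation that $\mu $ is in fact a complete lattice isomorphism is a slight sharpening that makes the transfer in (III) and (V) even cleaner.
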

\begin{proof}
Let $({\cal{L}}(A),\lambda )$ be the reticulation of $A$. Let us denote by $(i^{\prime })$, $(ii^{\prime })$, $(iii^{\prime })$, $(iv^{\prime })$, $(v^{\prime })$ the equivalents of conditions (i), (ii), (iii), (iv), respectively (v) from Theorem \ref{caractlco-Stone} for ${\cal{L}}(A)$ instead of $L$. By Theorem \ref{caractlco-Stone}, it is sufficient to prove that condition (I) is equivalent with condition $(i^{\prime })$ and the same is valid for conditions (II), (III), (IV), (V) with conditions $(ii^{\prime })$, $(iii^{\prime })$, $(iv^{\prime })$, respectively $(v^{\prime })$.

\noindent $(I)\Rightarrow (i^{\prime })$: Let $\emptyset \neq X\subseteq {\cal{L}}(A)$ with $|X|\leq m$. The fact that $\lambda $ is surjective implies that there exists $\emptyset \neq Y\subseteq A$ with $|Y|=|X|\leq m$ and $\lambda (Y)=X$. By (I), there exists $e\in B(A)$ such that $Y^{\top }=<e>$. By Lemma \ref{noua}, (\ref{noua1}), $\lambda (e)\in B({\cal{L}}(A))$. By Remarks \ref{comuttop} and \ref{lambdagen}, $X^{\top }=\lambda (Y)^{\top }=\lambda (Y^{\top })=\lambda (<e>)=<\lambda (e)>$.

\noindent $(i^{\prime })\Rightarrow (I)$: Let $\emptyset \neq X\subseteq A$ with $|X|\leq m$. Then $\lambda (X)\neq \emptyset $ and $|\lambda (X)|\leq |X|\leq m$, so there exists $f\in B({\cal{L}}(A))$ such that $\lambda (X)^{\top }=<f>$. $\lambda $ is surjective, so there exists $e\in A$ such that $\lambda (e)=f$. By Lemma \ref{noua}, (\ref{noua2}), there exists $n\in \N ^{*}$ such that $e^{n}\in B(A)$. Using Remarks \ref{comuttop} and \ref{lambdagen} and property c), we get $\lambda (X^{\top })=\lambda (X)^{\top }=<\lambda (e)>=<\lambda (e^{n})>=\lambda (<e^{n}>)$, which, by Proposition \ref{topfiltru} and Lemma \ref{lambdaegal}, implies $X^{\top }=<e^{n}>$.

Propositions \ref{co-Stoneiff} and \ref{izomb} ensure us that (II) and $(ii^{\prime })$ are equivalent.

\noindent $(III)\Leftrightarrow (iii^{\prime })$: Let us consider the posets $(A_{\top \top },\subseteq )$ and $({\cal{L}}(A)_{\top \top },\subseteq )$. By Proposition \ref{topfiltru}, $A_{\top \top }\subseteq {\cal{F}}(A)$ and ${\cal{L}}(A)_{\top \top }\subseteq {\cal{F}}({\cal{L}}(A))$. Let $\mu $ be the bounded lattice isomorphism from Proposition \ref{`izomf`} and $\psi :A_{\top \top }\rightarrow {\cal{L}}(A)_{\top \top }$ be the restriction of $\mu $ to $A_{\top \top }$, that is: for all $a\in A$, $\psi (a^{\top \top })=\mu (a^{\top \top })=\lambda (a^{\top \top })=\lambda (a)^{\top \top}$, where the last equality was obtained from Remark \ref{comuttop} and shows that $\psi $ is well defined. Propositions \ref{topfiltru} and \ref{`izomf`} imply that $\psi $ is an injective order-preserving function, and the fact that $\lambda $ is surjective implies that $\psi $ is surjective; this and Proposition \ref{`izomf`} show that $\psi $ is an order isomorphism. Therefore $A_{\top \top }$ is an $m$-complete Boolean algebra iff ${\cal{L}}(A)_{\top \top }$ is an $m$-complete Boolean algebra. 

\noindent $(IV)\Rightarrow (iv^{\prime })$: Let $a,b\in A$. We will use the surjectivity of $\lambda $. By condition 2), Remark \ref{comuttop} and Proposition \ref{`izomf`}, $(\lambda(a)\vee \lambda (b))^{\top }=\lambda (a\vee b)^{\top }=\lambda ((a\vee b)^{\top })=\lambda (a^{\top }\vee b^{\top })=\lambda (a^{\top })\vee \lambda (b^{\top })=\lambda (a)^{\top }\vee \lambda (b)^{\top }$. Let $\emptyset \neq X\subseteq {\cal{L}}(A)$ with $|X|\leq m$. By the surjectivity of $\lambda $, there exists $\emptyset \neq Y\subseteq A$ with $\lambda (Y)=X$ and $|Y|=|\lambda (Y)|=|X|\leq m$ ($Y$ can be obtained by choosing, for each $x\in X$, only one $y\in A$ such that $\lambda (y)=x$ and taking $Y$ to be the set of all these elements $y$). This implies that there exists $y\in A$ such that $Y^{\top \top}=y^{\top }$, which in turn, by Remark \ref{comuttop}, implies that $X^{\top \top}=\lambda (Y)^{\top \top }=\lambda (Y^{\top \top})=\lambda (y^{\top })=\lambda (y)^{\top }$.

\noindent $(iv^{\prime })\Rightarrow (IV)$: Let $a,b\in A$. We have: $(\lambda(a)\vee \lambda (b))^{\top }=\lambda (a)^{\top }\vee \lambda (b)^{\top }$, which, by computations similar to the ones above, is equivalent to: $\lambda ((a\vee b)^{\top })=\lambda (a^{\top }\vee b^{\top })$. This, by Proposition \ref{topfiltru} and Lemma \ref{lambdaegal}, implies that $(a\vee b)^{\top }=a^{\top }\vee b^{\top }$. Let $\emptyset \neq Y\subseteq A$ with $|Y|\leq m$. Then $\lambda (Y)\neq \emptyset $ and $|\lambda (Y)|\leq |Y|\leq m$, so, by the surjectivity of $\lambda $, there exists $y\in A$ such that $\lambda (Y)^{\top \top }=\lambda (y)^{\top }$. By Remark \ref{comuttop}, this is equivalent to $\lambda (Y^{\top \top })=\lambda (y^{\top })$, which, by Proposition \ref{topfiltru} and Lemma \ref{lambdaegal}, is equivalent to $Y^{\top \top }=y^{\top }$.

\noindent $(V)\Rightarrow (v^{\prime })$: Let $\emptyset \neq X\subseteq {\cal{L}}(A)$ such that $|X|\leq m$. The surjectivity of $\lambda $ implies that there exists $\emptyset \neq Y\subseteq A$ with $\lambda (Y)=X$ and $|Y|=|\lambda (Y)|=|X|\leq m$ ($Y$ can be chosen as in the proof of $(IV)\Rightarrow (iv^{\prime })$). Therefore $Y^{\top }\vee Y^{\top \top }=A$. By Remark \ref{comuttop}, Proposition \ref{`izomf`} and condition 4), this implies that $X^{\top }\vee X^{\top \top }=\lambda (Y)^{\top }\vee \lambda (Y)^{\top \top }=\lambda (Y^{\top })\vee \lambda (Y^{\top \top })=\lambda (Y^{\top }\vee Y^{\top \top })=\lambda (A)={\cal{L}}(A)$.

\noindent $(v^{\prime })\Rightarrow (V)$: Let $\emptyset \neq Y\subseteq A$ such that $|Y|\leq m$. Then $\lambda (Y)\neq \emptyset $ and $|\lambda (Y)|\leq |Y|\leq m$, so $\lambda (Y)^{\top }\vee \lambda (Y)^{\top \top }={\cal{L}}(A)$. By Remark \ref{comuttop}, Proposition \ref{`izomf`} and condition 4), this is equivalent to $\lambda (Y^{\top }\vee Y^{\top \top })=\lambda (A)$, which, by Lemma \ref{lambdaegal}, is equivalent to $Y^{\top }\vee Y^{\top \top }=A$.\end{proof}

A residuated lattice will be called an {\em $m$-co-Stone residuated lattice} iff the conditions of Theorem \ref{caractlrco-Stone} hold for it.\index{residuated lattice!$m$-co-Stone}

\begin{proposition}
$A$ is an $m$-co-Stone residuated lattice\index{residuated lattice!$m$-co-Stone} iff ${\cal{L}}(A)$ is an $m$-co-Stone lattice.\index{lattice!$m$-co-Stone}
\label{mco-Stoneiff}
\end{proposition}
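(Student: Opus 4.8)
The plan is to appeal directly to Theorems \ref{caractlrco-Stone} and \ref{caractlco-Stone}, together with the equivalences already established inside the proof of the former. By definition, $A$ is an $m$-co-Stone residuated lattice exactly when condition (I) of Theorem \ref{caractlrco-Stone} holds for $A$, and ${\cal{L}}(A)$ is an $m$-co-Stone lattice exactly when condition (i) of Theorem \ref{caractlco-Stone} holds for ${\cal{L}}(A)$, that is, exactly when the condition labelled $(i^{\prime })$ in the proof of Theorem \ref{caractlrco-Stone} holds. Since that proof establishes $(I)\Leftrightarrow (i^{\prime })$ via the reticulation map $\lambda$ (using the surjectivity of $\lambda$, Lemma \ref{noua}, and Remarks \ref{comuttop} and \ref{lambdagen}), the desired equivalence is immediate.

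Alternatively, for a slightly more self-contained route I would pass through condition (II)/$(ii^{\prime })$: Proposition \ref{co-Stoneiff} gives that $A$ is co-Stone iff ${\cal{L}}(A)$ is co-Stone, while Proposition \ref{izomb} gives that $\lambda$ restricts to a Boolean algebra isomorphism $B(A)\cong B({\cal{L}}(A))$, so $B(A)$ is an $m$-complete Boolean algebra iff $B({\cal{L}}(A))$ is. Conjoining these two facts shows that condition (II) holds for $A$ iff condition $(ii^{\prime })$ holds for ${\cal{L}}(A)$, and then Theorems \ref{caractlrco-Stone} and \ref{caractlco-Stone} yield the claim. Either packaging works; I would state the first, since it is the shortest.

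There is essentially no obstacle here: the proposition is a bookkeeping corollary of Theorem \ref{caractlrco-Stone}. The one point that deserves explicit mention is that the notion "$m$-co-Stone lattice" for ${\cal{L}}(A)$ is, by its definition following Theorem \ref{caractlco-Stone}, precisely the condition shown equivalent to (I) inside the proof of Theorem \ref{caractlrco-Stone}; once this identification is made the proof reduces to a single sentence. I would therefore write the proof as a direct reference to Theorems \ref{caractlrco-Stone} and \ref{caractlco-Stone}.
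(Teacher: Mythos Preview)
Your proposal is correct and matches the paper's own proof, which simply states that the proposition ``is a paraphrase of the equivalences established in the proof of Theorem \ref{caractlrco-Stone}.'' Your first route, citing the $(I)\Leftrightarrow (i^{\prime})$ equivalence from that proof, is exactly what the paper intends; the alternative via (II)/$(ii^{\prime})$ is also fine and already implicit there.
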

\begin{proof}

This is a paraphrase of the equivalences established in the proof of Theorem \ref{caractlrco-Stone}.\end{proof}

The following two remarks show that co-Stone residuated lattices do not have a characterization like the one in \cite[Theorem 8.7.1, page 164]{bal} for co-Stone pseudocomplemented distributive lattices.

\begin{remark}
There exist co-Stone residuated lattices $A$ with elements $a\in A$ that do not satisfy the identity $\neg \, a\vee \neg \, \neg \, a=1$.\index{residuated lattice!co-Stone}
\label{co-Stonedar}

\end{remark}
\begin{proof}
Let us consider the residuated lattice $A$ from Example \ref{lrex0}. $B(A)=\{0,1\}$, $<0>=A$, $<1>=\{1\}$, $0^{\top }=a^{\top }=b^{\top }=c^{\top }=\{1\}$, $1^{\top }=A$, therefore $A$ is a co-Stone residuated lattice. But $\neg \, a=b$, $\neg \, \neg \, a=\neg \, b=a$, so $\neg \, a\vee \neg \, \neg \, a=b\vee a=c\neq 1$.\end{proof}

Notice that $A$ from the proof above is strongly co-Stone.

\begin{remark}
There exist residuated lattices $A$ that satisfy the identity $\neg \, a\vee \neg \, \neg \, a=1$ for all $a\in A$ and that are not co-Stone.\index{residuated lattice!co-Stone}

\end{remark}
\begin{proof}
Let $A$ be the residuated lattice in Example \ref{lrex8}. $A$ satisfies the identity in the enunciation. $B(A)=\{0,1\}$, $<0>=A$, $<1>=\{1\}$, but $c^{\top }=\{d,1\}$, hence $A$ is not co-Stone.\end{proof}

\begin{remark}
There exist residuated lattices $A$ that do not satisfy the identity $\neg \, a\vee \neg \, \neg \, a=1$ for all $a\in A$, but whose reticulations ${\cal{L}}(A)$ are pseudocomplemented lattices and satisfy this identity: $l^{*}\vee l^{**}=1$ for all $l\in {\cal{L}}(A)$.
\label{nucdefco-Stone}
\end{remark}

\begin{proof}
Let $A$ be the residuated lattice in Example \ref{lrex3}. For instance, $\neg \, b\vee \neg \, \neg \, b=c\vee b=c\neq 1$.

See its reticulation ${\cal{L}}(A)$ in Example \ref{ex3}. One can see that ${\cal{L}}(A)$ is pseudocomplemented and satisfies the identity in the enunciation, as it is a Boolean algebra.\end{proof}

The remark above shows that the alternate definition of co-Stone algebras, from \cite{rcig}, is not transferrable through the reticulation, which is the reason why we have chosen our definition over it.

\section{The Strongly Co-Stone Hull of a Residuated Lattice}
\label{hull}

\hspace*{11pt} This section is concerned with the construction of the strongly co-Stone hull of a residuated lattice and its properties, among which a universality property and its preservation by the reticulation functor.

In the following, let $A$ be a residuated lattice. We shall define the strongly co-Stone hull of $A$, in a manner similar to the definition from \cite{din3} for the strongly Stone hull of an MV-algebra.

Let $B={\rm CoAnn}(A)$. Let us consider the poset $\Pi (A)=P(B)$\index{$\Pi (A)$}\index{poset of finite partitions} of the finite partitions of $B$ (see Section \ref{preliminaries} for the definitions). For any ${\cal{C}}\in \Pi (A)$, set $\displaystyle A_{\cal{C}}=\prod _{C\in {\cal{C}}}A/(C^{\top })$. For every ${\cal{C}},{\cal{D}}\in \Pi (A)$ with ${\cal{C}}\leq {\cal{D}}$, we shall consider the map ${\cal{P}}_{\cal{CD}}:A_{\cal{C}}\rightarrow A_{\cal{D}}$, for all $(a_{C})_{C\in {\cal{C}}}\subseteq A$, ${\cal{P}}_{\cal{CD}}((a_{C}/(C^{\top }))_{C\in {\cal{C}}})=(b_{D}/(D^{\top }))_{D\in {\cal{D}}}$, where, by definition, for all $D\in {\cal{D}}$, $b_{D}=a_{C}$, where $C$ is the unique member of ${\cal{C}}$  such that $D\subseteq C$. It is immediate that ${\cal{P}}_{\cal{CD}}$ is an injective morphism of residuated lattices and that $((A_{\cal{C}})_{{\cal{C}}\in \Pi (A)},({\cal{P}}_{\cal{CD}})_{{\cal{C}}\leq {\cal{D}}})$ is an inductive system of residuated lattices. Let $\displaystyle \tilde{A}=\varinjlim _{{\cal{C}}\in \Pi (A)}A_{\cal{C}}$\index{$\tilde{A}$} be its inductive limit. By the uniqueness of the inductive limit, it follows that $\tilde{A}$ is unique up to a residuated lattice isomorphism.

\begin{definition}
We define $\tilde{A}$ to be the {\em strongly co-Stone hull of $A$}.\index{strongly co-Stone hull}\index{$\tilde{A}$}

\end{definition}

The reasons for adopting this name will be made apparent by the following results in this section. 

For every $a\in A$ and every ${\cal{C}}\in \Pi (A)$, we denote by $\epsilon (a)$\index{$\epsilon $}\index{$\epsilon (a)$} the congruence class $[a_{\cal{C}}]$\index{$[a_{\cal{C}}]$} in $\tilde{A}$ of the element $(a/(C^{\top }))_{C\in {\cal{C}}}$, element which we will denote $a_{\cal{C}}$.\index{$a_{\cal{C}}$} The definition of $\epsilon $ does not depend on ${\cal{C}}$, because, if ${\cal{D}}\in \Pi (A)$, then we have: $[a_{\cal{C}}]=[a_{\cal{D}}]$ iff there exists ${\cal{E}}\in \Pi (A)$ with ${\cal{C}},{\cal{D}}\leq {\cal{E}}$, such that $a_{\cal{E}}=a_{\cal{E}}$, which is true.

\begin{remark}
$\epsilon :A\rightarrow \tilde{A}$ is an injective morphism of residuated lattices.\index{$\epsilon $}
\end{remark}
\begin{proof}
Let $a,b\in A$, ${\cal{C}},{\cal{D}}\in \Pi (A)$ and ${\cal{E}}\in \Pi(A)$ such that ${\cal{C}},{\cal{D}}\leq {\cal{E}}$. Then: $\epsilon (a)\vee \epsilon(b)=[a_{\cal{C}}]\vee [b_{\cal{D}}]=[(a\vee b)_{\cal{E}}]=\epsilon(a\vee b)$. One can similarly prove that $\epsilon $ preserves the other residuated lattice operations, hence it is a morphism of residuated lattices. For the injectivity, we have that $\epsilon (a)=\epsilon (b)$ iff $[a_{\cal{C}}]=[b_{\cal{D}}]$ iff there exists ${\cal{F}}\in \Pi (A)$ with ${\cal{C}},{\cal{D}}\leq {\cal{F}}$ such that $a_{\cal{F}}=b_{\cal{F}}$ iff, for all $F\in {\cal{F}}$, $a/(F^{\top })=b/(F^{\top })$ iff, for all $F\in {\cal{F}}$, $a\leftrightarrow b\in F^{\top }$ iff $\displaystyle a\leftrightarrow b\in \bigcap _{F\in {\cal{F}}}F^{\top }=\left(\bigcup _{F\in {\cal{F}}}F \right) ^{\top }=A^{\top }=\{1\}$ iff $a=b$, by Lemma \ref{calcul}, (\ref{art5(iv)}).\end{proof}

One can identify $A$ and $\epsilon (A)$.

\begin{remark}
For all $C\in {\rm CoAnn}(A)$, $\{C,C^{\top }\}$ is a partition of $B={\rm CoAnn}(A)$ and $C^{\top \top }=C$.
\end{remark}
\begin{proof}
Obviously $C\cap C^{\top }=\{1\}$, and $C\vee ^{\top }C^{\top }=(C^{\top }\cap C^{\top \top })^{\top }=1^{\top }=A$. 

Obviously $C\subseteq C^{\top \top }$, but $C=D^{\top }$ for some $D\subseteq A$, so $C^{\top }=D^{\top \top }\supseteq D$, which implies $C^{\top \top }\subseteq D^{\top }=C$, hence $C^{\top \top }=C$.\end{proof}

\begin{lemma}
For all $x\in \tilde{A}$, there exists $n\in \N ^{*}$ and there exist $e_{1},\ldots ,e_{n}\in B(\tilde{A})$ and $a_{1},\ldots ,a_{n}\in A$ such that: for all $i,j\in \overline{1,n}$ with $i\neq j$, $e_{i}\vee e_{j}=1$, $\displaystyle \bigwedge _{i=1}^{n}e_{i}=0$ and $\displaystyle x=\bigwedge _{i=1}^{n}(\epsilon (a_{i})\vee e_{i})$.
\label{exprx}

\end{lemma}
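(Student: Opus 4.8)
The plan is to obtain the asserted decomposition already inside one of the finite products $A_{\cal C}$ used to build $\tilde A$, and then transport it into $\tilde A$ by passing to equivalence classes. First I would use the construction of the inductive limit recalled in Section~\ref{preliminaries}: since $\tilde A$ is the quotient of $\coprod_{{\cal C}\in\Pi(A)}A_{\cal C}$ described there, every $x\in\tilde A$ has the form $x=[\,(a_i/(C_i^{\top}))_{i\in\overline{1,n}}\,]$ for some partition ${\cal C}=\{C_1,\dots,C_n\}\in\Pi(A)$ (so that $n\in\N^{*}$) and some $a_1,\dots,a_n\in A$; surjectivity of the quotient maps $A\to A/(C_i^{\top})$ is what lets us choose such $a_i$. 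These $a_i$ will be the elements in the statement, and we recall that, by the definition of $\epsilon$, $\epsilon(a)=[\,(a/(C_i^{\top}))_{i\in\overline{1,n}}\,]$ for every $a\in A$ and every such ${\cal C}$.

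Next I would work inside $A_{\cal C}=\prod_{i=1}^{n}A/(C_i^{\top})$. Write $\iota(a)=(a/(C_i^{\top}))_{i\in\overline{1,n}}\in A_{\cal C}$ for the diagonal copy of $a\in A$, so that $[\iota(a)]=\epsilon(a)$. For each $i\in\overline{1,n}$ let $g_i\in A_{\cal C}$ be the tuple with $i$-th entry $0$ and all other entries $1$. Then $g_i$ is a complemented element of $A_{\cal C}$ (its complement being the tuple with $i$-th entry $1$ and the rest $0$), and two immediate componentwise checks give $\bigwedge_{i=1}^{n}g_i=0$ (the $j$-th component of this meet contains the factor $(g_j)_j=0$) and $g_i\vee g_j=1$ for $i\ne j$ (in each component at least one of the two equals $1$).

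The key computation is that, componentwise, $\iota(a_i)\vee g_i$ has $i$-th entry $a_i/(C_i^{\top})$ and all other entries $1$; hence $\bigwedge_{i=1}^{n}\bigl(\iota(a_i)\vee g_i\bigr)=(a_j/(C_j^{\top}))_{j\in\overline{1,n}}$, which is precisely the tuple representing $x$. Now I would pass to classes: the map $y\mapsto[y]$ from $A_{\cal C}$ to $\tilde A$ is a morphism of residuated lattices, so it preserves $\vee,\wedge,0,1$ and therefore carries complemented elements to complemented ones. Setting $e_i=[g_i]\in B(\tilde A)$ and applying $[\,\cdot\,]$ to the three identities above yields $e_i\vee e_j=1$ for $i\ne j$, $\bigwedge_{i=1}^{n}e_i=0$, and
\[
x=\Bigl[\,\bigwedge_{i=1}^{n}\bigl(\iota(a_i)\vee g_i\bigr)\,\Bigr]=\bigwedge_{i=1}^{n}\bigl([\iota(a_i)]\vee[g_i]\bigr)=\bigwedge_{i=1}^{n}\bigl(\epsilon(a_i)\vee e_i\bigr),
\]
which is the required expression.

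I do not anticipate a genuine obstacle; the argument is essentially bookkeeping. The points that need care are: verifying that every element of $\tilde A$ really descends from a single $A_{\cal C}$ (this is exactly the quotient-of-the-disjoint-union description of the inductive limit in Section~\ref{preliminaries}); keeping the index conventions straight while evaluating $\bigwedge_i(\iota(a_i)\vee g_i)$ entry by entry; and confirming that passing to $[\,\cdot\,]$-classes sends $\iota(a_i)$ to $\epsilon(a_i)$, so that it is indeed $\epsilon(a_i)$ that appears in the final formula. The degenerate case of a trivial $A$ can be dispatched separately, since then $\tilde A$ is trivial and the statement is immediate.
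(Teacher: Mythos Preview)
Your proposal is correct and follows essentially the same route as the paper: represent $x$ by a tuple in some $A_{\cal C}$, take the ``indicator'' elements $g_i$ (the paper calls them $d_i$) with a $0$ in slot $i$ and $1$ elsewhere, and verify the decomposition componentwise before passing to classes. The only cosmetic difference is that the paper introduces each $e_i$ via the coarser binary partition ${\cal D}_i=\{C_i,C_i^{\top}\}$ and the transition map ${\cal P}_{{\cal D}_i{\cal C}}$, so that $e_i$ is explicitly written as $[(0/C_i^{\top},1/C_i)]$; this specific form is then reused in the proofs of the next two lemmas, so if you later need those results it is worth recording that your $[g_i]$ equals this expression.
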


\begin{proof}
Let $x\in \tilde{A}$. Then there exists ${\cal{C}}=\{C_{1},\ldots ,C_{n}\}\in \Pi(A)$ and there exist $a_{1},\ldots ,a_{n}\in A$ such that $x=[(a_{1}/C_{1}^{\top },\ldots ,a_{n}/C_{n}^{\top })]$.

For every $i\in \overline{1,n}$, let ${\cal{D}}_{i}=\{C_{i},C_{i}^{\top }\}\in \Pi (A)$, so $A_{{\cal{D}}_{i}}=A/C_{i}^{\top }\times A/C_{i}$, Obviously, for each $i$, ${\cal{D}}_{i}\leq {\cal{C}}$ (the distributivity of the Boolean algebra ${\rm CoAnn}(A)$ and the definition of the complement imply that $\vee ^{\top }_{j\neq i}C_{j}=C_{i}^{\top }$) and ${\cal{P}}_{{\cal{D}}_{i},{\cal{C}}}: A_{{\cal{D}}_{i}}\rightarrow A_{\cal{C}}$, for all $a,b\in A$, ${\cal{P}}_{{\cal{D}}_{i},{\cal{C}}}(a/C_{i}^{\top },b/C_{i})=(b/C_{1}^{\top },\ldots ,$\linebreak $b/C_{i-1}^{\top },a/C_{i}^{\top },b/C_{i+1}^{\top },\ldots ,b/C_{n}^{\top })$. For every $i\in \overline{1,n}$, let $d_{i}={\cal{P}}_{{\cal{D}}_{i},{\cal{C}}}(0/C_{i}^{\top },$\linebreak $1/C_{i})=(1/C_{1}^{\top },\ldots ,1/C_{i-1}^{\top },0/C_{i}^{\top },1/C_{i+1}^{\top },\ldots ,1/C_{n}^{\top })$ and let $e_{i}=[(0/C_{i}^{\top },$\linebreak $1/C_{i})]$. Notice that: for all $i,j\in \overline{1,n}$ with $i\neq j$, $e_{i}\vee e_{j}=1$ and $\displaystyle \bigwedge _{i=1}^{n}e_{i}=0$. For all $i\in \overline{1,n}$, $\epsilon (a_{i})\vee e_{i}=[(a_{i}/C_{i}^{\top },a_{i}/C_{i})]\vee [(0/C_{i}^{\top },1/C_{i})]=[(a_{i}/C_{i}^{\top },1/C_{i})]$ and ${\cal{P}}_{{\cal{D}}_{i},{\cal{C}}}(a_{i}/C_{i}^{\top },1/C_{i})=(1/C_{1}^{\top },\ldots ,1/C_{i-1}^{\top },a_{i}/C_{i}^{\top },$\linebreak $1/C_{i+1}^{\top },\ldots ,1/C_{n}^{\top })$, so $\displaystyle \bigwedge _{i=1}^{n}(\epsilon (a_{i})\vee e_{i})=\bigwedge _{i=1}^{n}[(a_{i}/C_{i}^{\top },1/C_{i})]=\bigwedge _{i=1}^{n}[{\cal{P}}_{{\cal{D}}_{i},{\cal{C}}}(a_{i}/C_{i}^{\top },$\linebreak $\displaystyle 1/C_{i})]=[\bigwedge _{i=1}^{n}{\cal{P}}_{{\cal{D}}_{i},{\cal{C}}}(a_{i}/C_{i}^{\top },1/C_{i})]=[(a_{1}/C_{1}^{\top },\ldots ,a_{n}/C_{n}^{\top })]=x$.\end{proof}

\begin{lemma}
$\tilde{A}$ is a co-Stone residuated lattice.
\label{esteco-Stone}
\end{lemma}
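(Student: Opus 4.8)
The plan is to show that every co-annihilator of a single element of $\tilde{A}$ is the principal filter generated by an element of $B(\tilde{A})$. The starting point is the normal form from Lemma \ref{exprx}: given $x \in \tilde{A}$, write $x = \bigwedge _{i=1}^{n}(\epsilon (a_{i})\vee e_{i})$ with $e_{1},\ldots ,e_{n}\in B(\tilde{A})$ pairwise joining to $1$ and meeting to $0$, and $a_{1},\ldots ,a_{n}\in A$. I would first record the elementary fact that for any filter and any family, $\left( \bigwedge _{i} y_i \right)^{\top } = \bigcap _i y_i^{\top }$ on the level of co-annihilators is not quite what is needed; rather the relevant identity is $(\bigwedge_i y_i)^\top = \bigvee^\top_i y_i^\top$ when the lattice is nice enough, but to stay safe I would instead compute $x^{\top }$ directly as $\{ z : z \vee x = 1 \}$ and use $z \vee \bigwedge_i(\epsilon(a_i)\vee e_i) = \bigwedge_i(z \vee \epsilon(a_i) \vee e_i)$ (distributivity of the lattice $\tilde A$), so that $z \in x^\top$ iff $z \vee \epsilon(a_i) \vee e_i = 1$ for every $i$.

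The second step is to understand $(\epsilon(a_i)\vee e_i)^\top$ for a single $i$. Since $e_i \in B(\tilde A)$, I expect that $z \vee \epsilon(a_i) \vee e_i = 1$ is equivalent to $(\neg e_i)\wedge(z \vee \epsilon(a_i)) $ being large enough — more precisely, using Lemma \ref{bcomut} (\ref{evx}), $\neg e_i \to w = e_i \vee w$, one gets $z\vee\epsilon(a_i)\vee e_i = 1$ iff $\neg e_i \le z \vee \epsilon(a_i)$ iff $z \ge$ (something computable from $\epsilon(a_i)$ and $\neg e_i$). The cleanest route: working inside the component picture of Lemma \ref{exprx}, $\epsilon(a_i)\vee e_i$ corresponds to $(a_i/C_i^\top, 1/C_i)$, and its co-annihilator in $\tilde A$ should be the image under $\epsilon$/the limit maps of $(a_i^\top / \ldots, A/C_i)$, i.e. governed by $a_i^\top$ in the $C_i^\top$-quotient together with the full filter in the $C_i$-quotient. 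Passing to the quotient $A/C_i^\top$, I would use that co-annihilators behave well under quotients, and then assemble: $x^\top = \bigcap_i (\epsilon(a_i)\vee e_i)^\top$, and each factor is a principal filter generated by a Boolean element of the form $\epsilon$-image-of-something joined with $e_i$, which lies in $B(\tilde A)$ by Lemma \ref{bcomut} (\ref{BA-e-f}). Finally the intersection of finitely many such principal filters on Boolean generators is again principal on a Boolean generator, using Proposition \ref{`sisau`} ($<e>\cap<f> = <e\vee f>$) and the fact that $B(\tilde A)$ is closed under $\vee$.

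I would therefore structure the write-up as: (1) fix $x$ and its normal form; (2) reduce $x^\top$ to $\bigcap_{i=1}^n(\epsilon(a_i)\vee e_i)^\top$ via distributivity; (3) show $(\epsilon(a_i)\vee e_i)^\top = \langle g_i\rangle$ for a single $g_i \in B(\tilde A)$, by analysing the two-component quotient $A/C_i^\top \times A/C_i$ — here I would invoke that $A/C_i^\top$ has the property that $a_i^\top$ computed there is related back to $a_i^\top$ in $A$, and exhibit $g_i$ explicitly as (the $\epsilon$-image of a suitable element) $\vee\, e_i$; (4) combine via Proposition \ref{`sisau`} to get $x^\top = \langle \bigvee_{i=1}^n g_i\rangle$ with $\bigvee_i g_i \in B(\tilde A)$.

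The main obstacle I anticipate is step (3): making precise the claim that $(\epsilon(a_i)\vee e_i)^\top$ is principal, generated by a Boolean element. The difficulty is that co-annihilators are taken in $\tilde A$, which is an inductive limit, so an a priori arbitrary $z \in \tilde A$ witnessing membership in the co-annihilator lives in some large partition; one must show the condition $z \vee \epsilon(a_i) \vee e_i = 1$ already forces $z$ above a fixed generator independent of the partition. I expect this to come out by pushing everything into the finite product $A_{{\cal D}_i} = A/C_i^\top \times A/C_i$ (where $e_i$ is literally a projection-type idempotent $(0/C_i^\top,1/C_i)$), computing the co-annihilator componentwise there, and then observing that the generator one obtains is fixed by the transition maps ${\cal P}_{{\cal D}_i,{\cal C}}$, hence descends to $\tilde A$; the key input is that in each quotient $A/C^\top$ the element $a^\top$ is again a co-annihilator and one can choose its generator, together with the injectivity and compatibility of the ${\cal P}_{\cal CD}$. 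Once (3) is in place, (4) is routine.
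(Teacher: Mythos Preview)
Your overall plan coincides with the paper's: write $y$ in the normal form of Lemma \ref{exprx}, reduce $y^{\top}$ to $\bigcap_{i}(\epsilon(b_i)\vee f_i)^{\top}$, show each factor is $\langle g_i\rangle$ with $g_i\in B(\tilde A)$, and assemble via Proposition \ref{`sisau`}. Steps (1), (2), (4) are fine and essentially identical to what the paper does.

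Step (3), however, has a genuine gap. You propose to find $g_i$ by ``pushing everything into the finite product $A_{{\cal D}_i}=A/C_i^{\top}\times A/C_i$ and computing the co-annihilator componentwise there'', exhibiting $g_i$ as $\epsilon(c)\vee e_i$ for some $c\in A$. This does not work, for two related reasons. First, the co-annihilator of $a_i/C_i^{\top}$ inside the quotient $A/C_i^{\top}$ has no reason to be principal generated by a Boolean element, because $A/C_i^{\top}$ need not be co-Stone; so the componentwise computation does not hand you a generator. Second, and more decisively, the correct generator simply does not live in $A_{{\cal D}_i}$: in the paper one has $g_i=d_i\odot\neg f_i$ with $d_i=\epsilon(b_i)^{*}:=[(0/b_i^{\top\top},1/b_i^{\top})]$, an element defined via the partition $\{b_i^{\top},b_i^{\top\top}\}$ of ${\rm CoAnn}(A)$, not via $\{C_i,C_i^{\top}\}$. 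This $d_i$ is a genuinely new Boolean element of $\tilde A$, not in general of the form $\epsilon(c)$, and $g_i$ is visible only after passing to the common refinement of the two partitions.

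The missing idea is therefore an intermediate step: first prove, for each $a\in A$, that $\epsilon(a)^{\top}=\langle\epsilon(a)^{*}\rangle$, where the Boolean generator $\epsilon(a)^{*}=[(0/a^{\top\top},1/a^{\top})]$ is built from the partition $\{a^{\top},a^{\top\top}\}$ determined by $a$ itself. The verification amounts to showing, for an arbitrary normal-form piece $\epsilon(a_i)\vee e_i$, that $a_i\vee a\in C_i^{\top}$ iff $a_i\in (a^{\top\top}\cap C_i)^{\top}$. Once this special case is in hand, the general factor follows by a residuation trick rather than a componentwise computation: $z\in(\epsilon(b_i)\vee f_i)^{\top}$ iff $z\vee f_i\in\epsilon(b_i)^{\top}=\langle d_i\rangle$ iff $d_i\leq z\vee f_i=\neg f_i\rightarrow z$ iff $d_i\odot\neg f_i\leq z$, whence $(\epsilon(b_i)\vee f_i)^{\top}=\langle d_i\odot\neg f_i\rangle$. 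Your sketch never isolates this intermediate step, and without it step (3) does not go through.
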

\begin{proof}
For all $a\in A$, we denote $\epsilon (a)^{*}=[(0/a^{\top \top },1/a^{\top })]$.\index{$\epsilon (a)^{*}$} Let $a\in A$ and let us prove that $\epsilon (a)^{\top }=<\epsilon (a)^{*}>$. Let $x\in \tilde{A}$, arbitrary, so, by Lemma \ref{exprx}, there exists $n\in \N ^{*}$ and there exist $e_{1},\ldots ,e_{n}\in B(\tilde{A})$ and $a_{1},\ldots ,a_{n}\in A$, chosen as in the proof of Lemma \ref{exprx}, such that: for all $i,j\in \overline{1,n}$ with $i\neq j$, $e_{i}\vee e_{j}=1$, $\displaystyle \bigwedge _{i=1}^{n}e_{i}=0$ and $\displaystyle x=\bigwedge _{i=1}^{n}(\epsilon (a_{i})\vee e_{i})$.

Let $i\in \overline{1,n}$. Notice that $\{a^{\top }\cap C_{i}^{\top },a^{\top \top }\cap C_{i}^{\top },a^{\top }\cap C_{i},a^{\top \top }\cap C_{i}\}$ is a partition and it is a refinement of each of the partitions $\{a^{\top },a^{\top \top}\}$ and $\{C_{i}^{\top },C_{i}\}$. $$\epsilon (a_{i})\vee e_{i}=[(a_{i}/C_{i}^{\top },a_{i}/C_{i})]\vee [(0/C_{i}^{\top },1/C_{i})]=[(a_{i}/C_{i}^{\top },1/C_{i})]$$  $$=[(1/(a^{\top }\cap C_{i}^{\top })^{\top },1/(a^{\top \top }\cap C_{i}^{\top })^{\top },a_{i}/(a^{\top }\cap C_{i})^{\top },a_{i}/(a^{\top \top }\cap C_{i})^{\top })],$$ so $$\epsilon (a_{i})\vee e_{i}\vee \epsilon (a)^{*}=$$ $$[(1/(a^{\top }\cap C_{i}^{\top })^{\top },1/(a^{\top \top }\cap C_{i}^{\top })^{\top },a_{i}/(a^{\top }\cap C_{i})^{\top },a_{i}/(a^{\top \top }\cap C_{i})^{\top })]\vee $$ $$[(0/(a^{\top }\cap C_{i}^{\top })^{\top },1/(a^{\top \top }\cap C_{i}^{\top })^{\top },0/(a^{\top }\cap C_{i})^{\top },1/(a^{\top \top }\cap C_{i})^{\top })]=$$ $$[(1/(a^{\top }\cap C_{i}^{\top })^{\top },1/(a^{\top \top }\cap C_{i}^{\top })^{\top },a_{i}/(a^{\top }\cap C_{i})^{\top },1/(a^{\top \top }\cap C_{i})^{\top })].$$ $$\epsilon (a_{i})\vee e_{i}\vee \epsilon (a)=[(a_{i}/C_{i}^{\top },1/C_{i})]\vee [(a/C_{i}^{\top },a/C_{i})]=[((a_{i}\vee a)/C_{i}^{\top },1/C_{i})].$$ For the following, see Lemma \ref{principal} and Lemma \ref{prop-cong}, (\ref{aF=0Fsau1F}). Hence: $\epsilon (a_{i})\vee e_{i}\in \epsilon (a)^{\top}$ iff $\epsilon (a_{i})\vee e_{i}\vee \epsilon(a)=1$ iff $(a_{i}\vee a)/C_{i}^{\top }=1/C_{i}^{\top }$ iff $a_{i}\vee a\in C_{i}^{\top }$, and, since $\epsilon (a)^{*}$ is obviously idempotent, $\epsilon (a_{i})\vee e_{i}\in <\epsilon (a)^{*}>$ iff $\epsilon (a)^{*}\leq \epsilon (a_{i})\vee e_{i}$ iff $\epsilon (a_{i})\vee e_{i}\vee \epsilon (a)^{*}=\epsilon (a_{i})\vee e_{i}$ iff $a_{i}/(a^{\top \top }\cap C_{i})^{\top }=1/(a^{\top \top }\cap C_{i})^{\top }$ iff $a_{i}\in (a^{\top \top }\cap C_{i})^{\top }$. We aim to prove that $\epsilon (a_{i})\vee e_{i}\in \epsilon (a)^{\top}$ iff $\epsilon (a_{i})\vee e_{i}\in <\epsilon (a)^{*}>$; it is sufficient to show that $a_{i}\vee a\in C_{i}^{\top }$ iff $a_{i}\in (a^{\top \top }\cap C_{i})^{\top }$. Let us prove the direct implication; so let us assume that $a_{i}\vee a\in C_{i}^{\top }$. Let $t\in a^{\top \top}\cap C_{i}$, arbitrary. Since $t\in a^{\top \top }$, it follows that $t\vee a_{i}\in a^{\top \top }$. Since $t\in C_{i}$, we get $t\vee a_{i}\vee a=1$, hence $t\vee a_{i}\in a^{\top }$. So $t\vee a_{i}\in a^{\top \top }\cap a^{\top }=\{1\}$, hence $t\vee a_{i}=1$. Therefore $a_{i}\in (a^{\top \top }\cap C_{i})^{\top }$. For the converse implication, let us assume that $a_{i}\in (a^{\top \top }\cap C_{i})^{\top }$. Let $t\in C_{i}$, arbitrary, so $t\vee a\in C_{i}$. $a\in a^{\top \top }$, so $t\vee a\in a^{\top \top }$. Therefore $t\vee a\in a^{\top \top }\cap C_{i}$. Then $t\vee a\vee a_{i}=1$, so $a\vee a_{i}\in C_{i}^{\top }$. Thus $\epsilon (a_{i})\vee e_{i}\in \epsilon (a)^{\top }$ iff $\epsilon (a_{i})\vee e_{i}\in <\epsilon (a)^{*}>$. 

Now we can go one step further and prove that $x\in \epsilon (a)^{\top }$ iff $x\in <\epsilon (a)^{*}>$, which leads to the conclusion that $\epsilon (a)^{\top }=<\epsilon (a)^{*}>$, as $x$ is arbitrary in $\tilde{A}$. By Remark \ref{inF} and the above, $x\in \epsilon (a)^{\top }$ iff $\displaystyle \bigwedge _{i=1}^{n}(\epsilon (a_{i})\vee e_{i})\in \epsilon (a)^{\top }$ iff, for all $i\in \overline{1,n}$, $\epsilon (a_{i})\vee e_{i}\in \epsilon (a)^{\top }$ iff, for all $i\in \overline{1,n}$, $\epsilon (a_{i})\vee e_{i}\in <\epsilon (a)^{*}>$ iff $\displaystyle \bigwedge _{i=1}^{n}(\epsilon (a_{i})\vee e_{i})\in <\epsilon (a)^{*}>$ iff $x\in <\epsilon (a)^{*}>$. Hence $\epsilon (a)^{\top }=<\epsilon (a)^{*}>$.

Let $y\in \tilde{A}$, arbitrary. We shall prove that $y^{\top }$ is generated by an element from the Boolean center of $\tilde{A}$, which will end the proof. First, let us write $y$ as an expresion made of elements of $A$ and elements from the Boolean center of $\tilde{A}$. By Lemma \ref{exprx}, there exists $m\in \N ^{*}$ and there exist $f_{1},\ldots ,f_{m}\in B(\tilde{A})$ and $b_{1},\ldots ,b_{m}\in A$, chosen like in the proof of the lemma, such that: for all $i,j\in \overline{1,m}$ with $i\neq j$, $f_{i}\vee f_{j}=1$, $\displaystyle \bigwedge _{i=1}^{m}f_{i}=0$ and $\displaystyle y=\bigwedge _{i=1}^{m}(\epsilon (b_{i})\vee f_{i})$. Next we shall obtain a writing of $y^{\top }$ depending on the $b_{i}$s and the $f_{i}$s. Let $v\in \tilde{A}$, arbitrary. By Remark \ref{inF}, $v\in y^{\top }$ iff $v\vee y=1$ iff $y\in v^{\top }$ iff $\displaystyle \bigwedge _{i=1}^{m}(\epsilon (b_{i})\vee f_{i})\in v^{\top }$ iff, for all $i\in \overline{1,m}$, $\epsilon (b_{i})\vee f_{i}\in v^{\top }$ iff, for all $i\in \overline{1,m}$, $\epsilon (b_{i})\vee f_{i}\vee v=1$ iff $\displaystyle v\in \bigcap _{i=1}^{m}(\epsilon (b_{i})\vee f_{i})^{\top }$. Therefore $\displaystyle y^{\top }=\bigcap _{i=1}^{m}(\epsilon (b_{i})\vee f_{i})^{\top }$.

Now we shall write each co-annihilator filter from this writing of $y^{\top }$ as a principal filter generated by an element from the Boolean center of $\tilde{A}$.

For every $i\in \overline{1,m}$, let us denote $d_{i}=\epsilon (b_{i})^{*}$, which is idempotent; then, by the above, $\epsilon (b_{i})^{\top }=<d_{i}>$, with $d_{i}$ idempotent, for every $i$. Moreover, each $d_{i}\in B(\tilde{A})$, as Lemma \ref{bcomut}, (\ref{B(A)=}) shows.

Let $z\in \tilde{A}$, arbitrary. By Lemma \ref{bcomut}, (\ref{evx}), Lemma \ref{principal} and the law of residuation, for each $i\in \overline{1,m}$, $z\in (\epsilon (b_{i})\vee f_{i})^{\top }$ iff $z\vee \epsilon (b_{i})\vee f_{i}=1$ iff $z\vee f_{i}\in \epsilon (b_{i})^{\top }=<d_{i}>$ iff $d_{i}\leq z\vee f_{i}=\neg \, f_{i}\rightarrow z$ iff $d_{i}\odot \neg \, f_{i}\leq z$ iff $z\in <d_{i}\odot \neg \, f_{i}>$ (remember that $\neg \, f_{i}$ is also idempotent). Hence, for each $i\in \overline{1,m}$, $(\epsilon (b_{i})\vee f_{i})^{\top }=<d_{i}\odot \neg \, f_{i}>$, with $d_{i}\odot \neg \, f_{i}\in B(\tilde{A})$, as Lemma \ref{bcomut}, (\ref{BA-e-f}) shows.

Therefore, by Proposition \ref{`sisau`}, $\displaystyle y^{\top }=\bigcap _{i=1}^{m}(\epsilon (b_{i})\vee f_{i})^{\top }=\bigcap _{i=1}^{m}<d_{i}\odot \neg \, f_{i}>=<\bigvee _{i=1}^{m}(d_{i}\odot \neg \, f_{i})>$, with $\displaystyle \bigvee _{i=1}^{m}(d_{i}\odot \neg \, f_{i})\in B(\tilde{A})$.

Hence $\tilde{A}$ is a co-Stone residuated lattice.\end{proof}

\begin{lemma}
Let $C,D\in {\rm CoAnn}(A)$. Then $[(0/C^{\top },1/C)]\vee [(0/D^{\top },1/D)]=[(0/(C\cap D)^{\top },1/(C\cap D))]$.
\label{cveed}
\end{lemma}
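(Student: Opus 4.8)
The plan is to realise all three elements appearing in the statement inside a single product $A_{\cal E}$ taken over a common refinement ${\cal E}\in\Pi(A)$ of the three partitions $\{C,C^{\top}\}$, $\{D,D^{\top}\}$ and $\{C\cap D,(C\cap D)^{\top}\}$ of $B={\rm CoAnn}(A)$, and then to check the asserted identity coordinatewise there. Recall that $B$ is a Boolean algebra with meet $\cap$, join $\vee^{\top}$ and complement ${}^{\top}$, and that $C^{\top\top}=C$; hence $A_{\{C,C^{\top}\}}=A/C^{\top}\times A/C$, and $[(0/C^{\top},1/C)]$ is the image in $\tilde A$ of the element $\xi_{C}=(0/C^{\top},1/C)$ of that product, its first coordinate being indexed by the block $C$ and its second by the block $C^{\top}$; likewise for $\xi_{D}$ and $\xi_{C\cap D}$. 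I will treat the generic case in which $C$, $D$ and $C\cap D$ all differ from $\{1\}$ and from $A$, the degenerate cases following from the same computation after deleting the blocks that collapse to $\{1\}$.

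First I would put ${\cal E}=\{C\cap D,\ C\cap D^{\top},\ C^{\top}\cap D,\ C^{\top}\cap D^{\top}\}$, keeping only the blocks distinct from $\{1\}$. Boolean distributivity in $B$ shows that ${\cal E}\in\Pi(A)$ and that ${\cal E}$ is a refinement of each of the three partitions above, since $C=(C\cap D)\vee^{\top}(C\cap D^{\top})$, $C^{\top}=(C^{\top}\cap D)\vee^{\top}(C^{\top}\cap D^{\top})$, the block $C\cap D$ is itself a member of ${\cal E}$, and $(C\cap D)^{\top}=C^{\top}\vee^{\top}D^{\top}=(C\cap D^{\top})\vee^{\top}(C^{\top}\cap D)\vee^{\top}(C^{\top}\cap D^{\top})$.

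Next I would push $\xi_{C}$, $\xi_{D}$ and $\xi_{C\cap D}$ into $A_{\cal E}$ along the connecting maps ${\cal P}_{\{C,C^{\top}\},{\cal E}}$, ${\cal P}_{\{D,D^{\top}\},{\cal E}}$ and ${\cal P}_{\{C\cap D,(C\cap D)^{\top}\},{\cal E}}$. By the definition of these maps, the $E$-coordinate ($E\in{\cal E}$) of the image of $\xi_{C}$ is $0/E^{\top}$ if $E\subseteq C$ and $1/E^{\top}$ if $E\subseteq C^{\top}$, and analogously for the other two. Abbreviating the class $b/E^{\top}$ by its representative $b\in\{0,1\}$ and listing coordinates over the blocks $C\cap D,\ C\cap D^{\top},\ C^{\top}\cap D,\ C^{\top}\cap D^{\top}$ in this order, one gets: the image of $\xi_{C}$ is $(0,0,1,1)$; the image of $\xi_{D}$ is $(0,1,0,1)$; hence their join, computed componentwise in the product $A_{\cal E}$, is $(0,1,1,1)$; and since $C\cap D\subseteq C\cap D$ while $C\cap D^{\top}$, $C^{\top}\cap D$ and $C^{\top}\cap D^{\top}$ all lie in $(C\cap D)^{\top}$, the image of $\xi_{C\cap D}$ is also $(0,1,1,1)$. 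Thus ${\cal P}_{\{C,C^{\top}\},{\cal E}}(\xi_{C})\vee{\cal P}_{\{D,D^{\top}\},{\cal E}}(\xi_{D})={\cal P}_{\{C\cap D,(C\cap D)^{\top}\},{\cal E}}(\xi_{C\cap D})$ in $A_{\cal E}$.

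Finally, applying the canonical morphism $\phi_{\cal E}:A_{\cal E}\rightarrow\tilde A$ of residuated lattices, which satisfies $\phi_{\cal E}\circ{\cal P}_{{\cal C},{\cal E}}=\phi_{\cal C}$ for every ${\cal C}\leq{\cal E}$ (so that $\phi_{\cal E}({\cal P}_{\{C,C^{\top}\},{\cal E}}(\xi_{C}))=[(0/C^{\top},1/C)]$, and likewise for $D$ and for $C\cap D$), transports the last displayed equality to $[(0/C^{\top},1/C)]\vee[(0/D^{\top},1/D)]=[(0/(C\cap D)^{\top},1/(C\cap D))]$, which is the assertion. The only delicate point is the bookkeeping --- matching each coordinate of a product $A_{\cal C}$ to the block of ${\cal C}$ indexing it, and keeping the nested ${}^{\top}$'s straight; verifying ${\cal E}\in\Pi(A)$ and handling the degenerate cases require a remark but no new idea.
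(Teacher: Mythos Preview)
Your proof is correct and follows essentially the same approach as the paper: both pass to the common refinement ${\cal E}=\{C\cap D,\,C\cap D^{\top},\,C^{\top}\cap D,\,C^{\top}\cap D^{\top}\}$ and verify the identity coordinatewise there. Your write-up is slightly more explicit than the paper's in that you spell out why ${\cal E}$ also refines $\{C\cap D,(C\cap D)^{\top}\}$, invoke the compatibility $\phi_{\cal E}\circ{\cal P}_{{\cal C},{\cal E}}=\phi_{\cal C}$ to transport the equality to $\tilde A$, and flag the degenerate cases, but the argument is the same.
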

\begin{proof}
We will use the following refinement of the partitions $\{C^{\top },C\}$ and $\{D^{\top },D\}$: $\{C^{\top }\cap D^{\top },C^{\top }\cap D,C\cap D^{\top },C\cap D\}$. $$[(0/C^{\top },1/C)]\vee [(0/D^{\top },1/D)]=$$ $$[(1/(C^{\top }\cap D^{\top })^{\top },1/(C^{\top }\cap D)^{\top },0/(C\cap D^{\top })^{\top },0/(C\cap D)^{\top })]\vee $$ $$[(1/(C^{\top }\cap D^{\top })^{\top },0/(C^{\top }\cap D)^{\top },1/(C\cap D^{\top })^{\top },0/(C\cap D)^{\top })]=$$ $$[(1/(C^{\top }\cap D^{\top })^{\top },1/(C^{\top }\cap D)^{\top },1/(C\cap D^{\top })^{\top },0/(C\cap D)^{\top })]=$$ $$[(0/(C\cap D) ^{\top },1/(C\cap D))].$$\end{proof}

Here is a generalization of the previous lemma (see Proposition \ref{`sisau`}).

\begin{lemma}
Let $I$ be an arbitrary index set and, for all $i\in I$, $E_{i}\in {\rm CoAnn}(A)$, $e_{i}=[(0/E_{i}^{\top },1/E_{i})]$, and $\displaystyle e=[(0/(\bigcap _{i\in I}E_{i})^{\top },1/(\bigcap _{i\in I}E_{i}))]$. Then: $\displaystyle \bigcap _{i\in I}<e_{i}>=<e>$.
\label{intersET}
\end{lemma}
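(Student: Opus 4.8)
The plan is to prove the equality by establishing the two inclusions separately; the inclusion $<e>\subseteq\bigcap_{i\in I}<e_{i}>$ is immediate from Lemma \ref{cveed}, and all the work lies in the reverse inclusion.

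Put $C=\bigcap_{i\in I}E_{i}$. Writing $E_{i}=X_{i}^{\top}$ for suitable $X_{i}\subseteq A$, we get $C=\bigcap_{i\in I}X_{i}^{\top}=\left(\bigcup_{i\in I}X_{i}\right)^{\top}\in{\rm CoAnn}(A)$, so by the Remark preceding Lemma \ref{exprx} the pair $\{C,C^{\top}\}$ is a partition of $B={\rm CoAnn}(A)$ and $C^{\top\top}=C$; hence $e=[(0/C^{\top},1/C)]$ is a well-defined element of $\tilde{A}$. Since $C\subseteq E_{i}$ we have $C\cap E_{i}=C$, so Lemma \ref{cveed} applied to the pair $(C,E_{i})$ gives $e\vee e_{i}=[(0/(C\cap E_{i})^{\top},1/(C\cap E_{i}))]=e$, that is $e_{i}\leq e$. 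Consequently $e\in<e_{i}>$, whence $<e>\subseteq<e_{i}>$ for every $i\in I$, and intersecting over $I$ yields $<e>\subseteq\bigcap_{i\in I}<e_{i}>$.

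For the converse, fix $x\in\bigcap_{i\in I}<e_{i}>$. Each $e_{i}$ is the class of the complemented element $(0,1)$ of the two-factor product $A/E_{i}^{\top}\times A/E_{i}$, hence $e_{i}\in B(\tilde{A})$ and in particular $e_{i}$ is idempotent; by Lemma \ref{principal} this gives $<e_{i}>=\{y\in\tilde{A}\mid e_{i}\leq y\}$, so $e_{i}\leq x$ for all $i\in I$. Realize $x$ at a finite stage: there are a partition ${\cal{F}}=\{F_{1},\ldots,F_{m}\}\in\Pi(A)$ and $x_{1},\ldots,x_{m}\in A$ with $x=[(x_{1}/F_{1}^{\top},\ldots,x_{m}/F_{m}^{\top})]$. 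It suffices to prove $e\leq x$, since then $x\in<e>$. The key step is to transcribe the relevant inequalities into ${\rm CoAnn}(A)$. Comparing $e_{i}$ with $x$ at the common refinement $\{E_{i}^{\top}\cap F_{j},\,E_{i}\cap F_{j}\mid j\in\overline{1,m}\}$ of $\{E_{i}^{\top},E_{i}\}$ and ${\cal{F}}$ (zero blocks discarded), and using that the transition maps ${\cal{P}}$ are injective morphisms of residuated lattices — hence preserve and reflect $\leq$ — together with Lemma \ref{prop-cong}, (\ref{aF=0Fsau1F}) and the trivial fact $a\in Z^{\top}\Leftrightarrow Z\subseteq a^{\top}$, one obtains
$$e_{i}\leq x\ \Longleftrightarrow\ (\forall j\in\overline{1,m})\ E_{i}^{\top}\cap F_{j}\subseteq x_{j}^{\top},$$
and, running the same computation with $C$ in place of $E_{i}$,
$$e\leq x\ \Longleftrightarrow\ (\forall j\in\overline{1,m})\ C^{\top}\cap F_{j}\subseteq x_{j}^{\top}.$$
So it remains to deduce, for each fixed $j$, that $C^{\top}\cap F_{j}\subseteq x_{j}^{\top}$ from the hypotheses $E_{i}^{\top}\cap F_{j}\subseteq x_{j}^{\top}$ ($i\in I$). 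In the complete Boolean algebra ${\rm CoAnn}(A)$ of Proposition \ref{CoAnnbool}, $C=\bigcap_{i\in I}E_{i}$ is the infimum of $(E_{i})_{i\in I}$, so $C^{\top}=\bigvee_{i\in I}{}^{\top}E_{i}^{\top}$; by the infinite distributive law, $C^{\top}\cap F_{j}=\bigvee_{i\in I}{}^{\top}(E_{i}^{\top}\cap F_{j})$, and since every $E_{i}^{\top}\cap F_{j}$ is contained in $x_{j}^{\top}$ and $x_{j}^{\top}\in{\rm CoAnn}(A)$, the supremum of these elements is also contained in $x_{j}^{\top}$. Hence $e\leq x$, so $x\in<e>$, and together with the first inclusion this yields $\bigcap_{i\in I}<e_{i}>=<e>$.

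The main obstacle is the passage from a finite index set — where Lemma \ref{cveed} and Proposition \ref{`sisau`} dispatch the claim at once via $<\bigvee_{i=1}^{n}e_{i}>=<e>$ — to an arbitrary one, since no infinite join is available inside $\tilde{A}$. The trick is that the test $e\leq x$ only sees the finitely many coordinates of whatever finite stage $x$ lives at, so the infinitary part of the statement is absorbed by the completeness and infinite distributivity of ${\rm CoAnn}(A)$; carrying out the common-refinement bookkeeping behind the two displayed equivalences is the one genuinely tedious ingredient.
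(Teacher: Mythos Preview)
Your proof is correct and follows essentially the same route as the paper's: both arguments realize an arbitrary $x\in\tilde{A}$ at a finite partition, translate the inequalities $e_{i}\leq x$ and $e\leq x$ into the co-annihilator conditions $a_{j}\in(E_{i}^{\top}\cap C_{j})^{\top}$ (equivalently $E_{i}^{\top}\cap C_{j}\subseteq a_{j}^{\top}$) via a common refinement, and then pass from the family $(E_{i}^{\top})_{i}$ to $(\bigcap_{i}E_{i})^{\top}$.

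The only noteworthy difference is in this last passage. The paper asserts the set-theoretic identity $(\bigcap_{i}E_{i})^{\top}=\bigcup_{i}E_{i}^{\top}$ and deduces $((\bigcap_{i}E_{i})^{\top}\cap C_{j})^{\top}=\bigcap_{i}(E_{i}^{\top}\cap C_{j})^{\top}$ from it. You instead work inside the complete Boolean algebra ${\rm CoAnn}(A)$, use $C^{\top}=\bigvee_{i}{}^{\top}E_{i}^{\top}$ together with infinite distributivity to get $C^{\top}\cap F_{j}=\bigvee_{i}{}^{\top}(E_{i}^{\top}\cap F_{j})$, and conclude by the universal property of the supremum. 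Your formulation is arguably the safer one, since the raw union $\bigcup_{i}E_{i}^{\top}$ need not be a filter and the paper's set equality is not obviously true as stated (what is actually needed, and what both arguments ultimately use, is that the two sides have the same co-annihilator after intersecting with $C_{j}$). Otherwise the two proofs are the same; your separate treatment of the easy inclusion $\langle e\rangle\subseteq\bigcap_{i}\langle e_{i}\rangle$ via Lemma~\ref{cveed} is a cosmetic variation on the paper's ``iff'' presentation.
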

\begin{proof}
Let $x\in \tilde{A}$, arbitrary. We shall prove that: $\displaystyle x\in \bigcap _{i\in I}<e_{i}>$ iff $x\in <e>$. By the proof of Lemma \ref{exprx}, there exists an $n\in \N ^{*}$ and, for all $j\in \overline{1,n}$, there exists $a_{j}\in A$ and $C_{j}\in {\rm CoAnn}(A)$, such that $\displaystyle x=\bigwedge _{j=1}^{n}[(a_{j}/C_{j}^{\top },1/C_{j})]$. By Remark \ref{inF}, it is sufficient to show that, for each $j\in \overline{1,n}$, $\displaystyle [(a_{j}/C_{j}^{\top },1/C_{j})]\in \bigcap _{i\in I}<e_{i}>$ iff $[(a_{j}/C_{j}^{\top },1/C_{j})]\in <e>$. Let $j\in \overline{1,n}$. Obviously, $e$ and each $e_{i}$ are idempotent (actually, by Lemma \ref{bcomut}, (\ref{B(A)=}), they belong to $B(\tilde{A})$), therefore it is sufficient to show that $e\leq [(a_{j}/C_{j}^{\top },1/C_{j})]$ iff, for all $i\in I$, $e_{i}\leq [(a_{j}/C_{j}^{\top },1/C_{j})]$ (see Lemma \ref{principal}).

Let $i\in I$. $e_{i}\leq [(a_{j}/C_{j}^{\top },1/C_{j})]$ iff $[(0/E_{i}^{\top },1/E_{i})]\leq [(a_{j}/C_{j}^{\top },1/C_{j})]$ iff $[(0/(E_{i}\cap C_{j})^{\top },1/(E_{i}^{\top }\cap C_{j})^{\top },0/(E_{i}\cap C_{j}^{\top })^{\top },1/(E_{i}^{\top }\cap C_{j}^{\top })^{\top })]\leq [(a_{j}/(E_{i}\cap C_{j})^{\top },a_{j}/(E_{i}^{\top }\cap C_{j})^{\top },1/(E_{i}\cap C_{j}^{\top })^{\top },1/(E_{i}^{\top }\cap C_{j}^{\top })^{\top })]$ iff $1/(E_{i}^{\top }\cap C_{j})^{\top }\leq a_{j}/(E_{i}^{\top }\cap C_{j})^{\top }$ iff $1/(E_{i}^{\top }\cap C_{j})^{\top }=a_{j}/(E_{i}^{\top }\cap C_{j})^{\top }$ iff $a_{j}\in (E_{i}^{\top }\cap C_{j})^{\top }$ (see Lemma \ref{prop-cong}, (\ref{aF=0Fsau1F})). Analogously, $e\leq [(a_{j}/C_{j}^{\top },1/C_{j})]$ iff $\displaystyle a_{j}\in ((\bigcap _{i\in I}E_{i})^{\top }\cap C_{j})^{\top }$. So it remains to show that $\displaystyle a_{j}\in ((\bigcap _{i\in I}E_{i})^{\top }\cap C_{j})^{\top }$ iff, for all $i\in I$, $a_{j}\in (E_{i}^{\top }\cap C_{j})^{\top }$, which is true, because, by the definition of the co-annihilator of a set: $\displaystyle (\bigcap _{i\in I}E_{i})^{\top }=\bigcup _{i\in I}E_{i}^{\top }$, hence $\displaystyle (\bigcap _{i\in I}E_{i})^{\top }\cap C_{j}=\bigcup _{i\in I}E_{i}^{\top }\cap C_{j}$, that is $\displaystyle (\bigcap _{i\in I}E_{i})^{\top }\cap C_{j}=\bigcup _{i\in I}(E_{i}^{\top }\cap C_{j})$, thus $\displaystyle ((\bigcap _{i\in I}E_{i})^{\top }\cap C_{j})^{\top }=(\bigcup _{i\in I}(E_{i}^{\top }\cap C_{j}))^{\top }=\bigcap _{i\in I}(E_{i}^{\top }\cap C_{j})^{\top }$.\end{proof}

\begin{lemma}
$\tilde{A}$ is a strongly co-Stone residuated lattice.
\label{estetareco-Stone}

\end{lemma}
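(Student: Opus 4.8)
The plan is to upgrade the proof of Lemma~\ref{esteco-Stone}. That lemma already gives that $\tilde{A}$ is co-Stone, and for any non-empty $X\subseteq\tilde{A}$ we have $X^{\top}=\bigcap_{x\in X}x^{\top}$; so it suffices to prove that every co-annihilator $x^{\top}$ ($x\in\tilde{A}$) is a \emph{principal} filter generated by a \emph{special} idempotent of the shape $\sigma(C):=[(0/C^{\top},1/C)]$ with $C\in{\rm CoAnn}(A)$, and then to collapse the intersection $\bigcap_{x\in X}x^{\top}$ by means of Lemma~\ref{intersET}, which evaluates an arbitrary intersection of principal filters of such special idempotents.

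First I would record the calculus of the elements $\sigma(C)$, $C\in{\rm CoAnn}(A)$. Each $\sigma(C)$ is idempotent, and the canonical residuated lattice morphism $A/C^{\top}\times A/C\to\tilde{A}$ sends the complement $(1/C^{\top},0/C)$ of $(0/C^{\top},1/C)$ to a complement of $\sigma(C)$ in $\tilde{A}$; one checks this complement is $\sigma(C^{\top})$. Hence $\sigma(C)\in B(\tilde{A})$ and $\neg\,\sigma(C)=\sigma(C^{\top})$. By Lemma~\ref{cveed}, $\sigma(C)\vee\sigma(D)=\sigma(C\cap D)$, so by De Morgan in the Boolean algebra $B(\tilde{A})$ together with Lemma~\ref{bcomut}, (\ref{BA-e-f}), also $\sigma(C)\odot\sigma(D)=\sigma(C)\wedge\sigma(D)=\sigma(C\vee^{\top}D)$; and, by induction, a finite join of $\sigma$-elements is again a $\sigma$-element. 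Now in the notation of the second half of the proof of Lemma~\ref{esteco-Stone}, the relevant elements are $d_{i}=\epsilon(b_{i})^{*}=[(0/b_{i}^{\top\top},1/b_{i}^{\top})]=\sigma(b_{i}^{\top})$ and $f_{i}=[(0/C_{i}^{\top},1/C_{i})]=\sigma(C_{i})$, with $b_{i}\in A$ and $C_{i}\in{\rm CoAnn}(A)$; thus $\neg\,f_{i}=\sigma(C_{i}^{\top})$ and $d_{i}\odot\neg\,f_{i}=\sigma(b_{i}^{\top})\wedge\sigma(C_{i}^{\top})=\sigma(b_{i}^{\top}\vee^{\top}C_{i}^{\top})$. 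Therefore the generator $\bigvee_{i=1}^{m}(d_{i}\odot\neg\,f_{i})$ exhibited in that proof equals $\sigma(H_{y})$, where $H_{y}=\bigcap_{i=1}^{m}(b_{i}^{\top}\vee^{\top}C_{i}^{\top})\in{\rm CoAnn}(A)$; that is, $y^{\top}=<\sigma(H_{y})>$ for every $y\in\tilde{A}$.

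To finish: given a non-empty $X\subseteq\tilde{A}$, set $H=\bigcap_{x\in X}H_{x}$. An arbitrary intersection of co-annihilators is a co-annihilator (indeed $\bigcap_{x}Y_{x}^{\top}=(\bigcup_{x}Y_{x})^{\top}$), so $H\in{\rm CoAnn}(A)$ and $\sigma(H)=[(0/H^{\top},1/H)]\in B(\tilde{A})$. By Lemma~\ref{intersET}, applied with index set $X$ and $E_{x}=H_{x}$, we get $X^{\top}=\bigcap_{x\in X}x^{\top}=\bigcap_{x\in X}<\sigma(H_{x})>=<\sigma(H)>$. Thus every co-annihilator of $\tilde{A}$ is generated by an element of $B(\tilde{A})$, which is exactly the assertion that $\tilde{A}$ is strongly co-Stone.

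The main obstacle is the middle step: one has to verify that the idempotents produced in the proof of Lemma~\ref{esteco-Stone} genuinely have the form $[(0/C^{\top},1/C)]$ with $C\in{\rm CoAnn}(A)$ --- which relies on the explicit choices made in the proofs of Lemmas~\ref{exprx} and~\ref{esteco-Stone} --- and that Lemma~\ref{cveed}, complemented, really allows one to collapse finite joins and meets of such elements to a single one. Once that bookkeeping is in place, the passage from ``co-Stone'' to ``strongly co-Stone'' is supplied verbatim by Lemma~\ref{intersET}, now applied to a possibly infinite index set.
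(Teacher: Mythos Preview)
Your proposal is correct and follows essentially the same route as the paper's own proof: both show that the generator $e_{x}$ of $x^{\top}$ produced in Lemma~\ref{esteco-Stone} is of the special form $[(0/E_{x}^{\top},1/E_{x})]$ with $E_{x}\in{\rm CoAnn}(A)$, and then invoke Lemma~\ref{intersET} to collapse the (possibly infinite) intersection $\bigcap_{x\in X}\langle e_{x}\rangle$. Your bookkeeping via the map $\sigma$ and the Boolean identities $\neg\sigma(C)=\sigma(C^{\top})$, $\sigma(C)\wedge\sigma(D)=\sigma(C\vee^{\top}D)$ is a cleaner packaging of the paper's explicit coordinate computation (and indeed your $H_{y}=\bigcap_{i}(b_{i}^{\top}\vee^{\top}C_{i}^{\top})$ equals the paper's $E_{x}=\bigcap_{i}(b_{i}^{\top\top}\cap C_{i})^{\top}$, since $C_{i}^{\top\top}=C_{i}$), but the argument is the same.
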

\begin{proof}
Let $X\subseteq \tilde{A}$. By Lemma \ref{esteco-Stone}, $\displaystyle X^{\top }=\bigcap _{x\in X}x^{\top }=\bigcap _{x\in X}<e_{x}>$ for some $e_{x}\in B(\tilde{A})$ for every $x\in X$.

Let $x\in X$. Using the notations from the proofs of Lemmas \ref{exprx} and \ref{esteco-Stone}, $e_{x}$ is of the form $\displaystyle e_{x}=\bigvee _{i=1}^{m}(d_{i}\odot \neg \, f_{i})$, where $d_{i}=\epsilon (b_{i})^{*}=[(0/b_{i}^{\top \top },1/b_{i}^{\top })]$, with $b_{i}\in A$, and $f_{i}=[(0/C_{i}^{\top },1/C_{i})]$, with $C_{i}\in {\rm CoAnn}(A)$. So $\neg \, f_{i}=[(1/C_{i}^{\top },0/C_{i})]$ and $$d_{i}\odot \neg \, f_{i}=[(1/(b_{i}^{\top \top }\cap C_{i}^{\top })^{\top },1/(b_{i}^{\top \top }\cap C_{i})^{\top },0/(b_{i}^{\top }\cap C_{i}^{\top })^{\top },0/(b_{i}^{\top }\cap C_{i})^{\top })]$$ $$\odot [(0/(b_{i}^{\top \top }\cap C_{i}^{\top })^{\top },1/(b_{i}^{\top \top }\cap C_{i})^{\top },0/(b_{i}^{\top }\cap C_{i}^{\top })^{\top },1/(b_{i}^{\top }\cap C_{i})^{\top })]=$$ $$[(0/(b_{i}^{\top \top }\cap C_{i}^{\top })^{\top },1/(b_{i}^{\top \top }\cap C_{i})^{\top },0/(b_{i}^{\top }\cap C_{i}^{\top })^{\top },0/(b_{i}^{\top }\cap C_{i})^{\top })]=$$ $$[(0/(b_{i}^{\top \top }\cap C_{i}),1/(b_{i}^{\top \top }\cap C_{i})^{\top })]=[(0/D_{i}^{\top },1/D_{i})],$$ where $D_{i}=(b_{i}^{\top \top }\cap C_{i})^{\top }$. By Lemma \ref{cveed}, for every $i,j\in \overline{1,m}$ with $i\neq j$, $[(0/D_{i}^{\top },1/D_{i})]\vee [(0/D_{j}^{\top },1/D_{j})]=[(0/(D_{i}\cap D_{j})^{\top },1/(D_{i}\cap D_{j}))]$, hence, by induction on $m$, one can show that $$e_{x}=\bigvee _{i=1}^{m}(d_{i}\odot \neg \, f_{i})=[(0/(\bigcap _{i=1}^{m}D_{i})^{\top },1/(\bigcap _{i=1}^{m}D_{i}))]=[(0/E_{x}^{\top },1/E_{x})],$$ where $\displaystyle E_{x}=\bigcap _{i=1}^{m}D_{i}$.

Let $\displaystyle e=[(0/(\bigcap _{x\in X}E_{x})^{\top },1/(\bigcap _{x\in X}E_{x}))]$. Lemma \ref{bcomut}, (\ref{B(A)=}) ensures us that $e\in B(\tilde{A})$. By Lemma \ref{intersET}, $\displaystyle <e>=\bigcap _{x\in X}<e_{x}>=X^{\top }$. So $\tilde{A}$ is strongly co-Stone.\end{proof}

\begin{proposition}
$A$ is co-dense in $\tilde{A}$.

\end{proposition}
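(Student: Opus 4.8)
The plan is to unwind the definition of co-density and, for each $x\in\tilde A\setminus\{1\}$, produce an element $c\in A$ with $x\leq\epsilon(c)$ and $\epsilon(c)\neq 1$; since $\epsilon$ is injective the latter is just $c\neq 1$, and once done this says exactly that $\epsilon(A)\cong A$ is co-dense in $\tilde A$. I would work from the description of $\tilde A$ as $\varinjlim_{{\cal{C}}\in\Pi(A)}A_{\cal{C}}$: every $x\in\tilde A$ is represented by a tuple $(a_i/C_i^{\top})_{i=1}^n$ over some finite partition ${\cal{C}}=\{C_1,\dots,C_n\}$ of $B={\rm CoAnn}(A)$, with $a_1,\dots,a_n\in A$, and by the very definition of $\epsilon$ the same partition may be used to write $\epsilon(c)=[(c/C_i^{\top})_{i=1}^n]$ for any $c\in A$. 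Because the transition maps ${\cal{P}}_{\cal{CD}}$ are injective, so are the canonical maps $A_{\cal{C}}\to\tilde A$; hence $x=1$ in $\tilde A$ iff $(a_i/C_i^{\top})_i$ equals the top of $A_{\cal{C}}$, i.e. (Lemma \ref{prop-cong}(\ref{aF=0Fsau1F})) iff $a_i\in C_i^{\top}$ for every $i$. So the hypothesis $x\neq 1$ hands us a nonempty set $S=\{\,i:a_i\notin C_i^{\top}\,\}$.

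The key step is the choice of $c$. For $i\in S$, from $a_i\notin C_i^{\top}=\{\,w:w\vee z=1\text{ for all }z\in C_i\,\}$ I can pick $z_i\in C_i$ with $a_i\vee z_i\neq 1$; since $C_i$ is a filter (Proposition \ref{topfiltru}) and $a_i\vee z_i\geq z_i$, we also have $a_i\vee z_i\in C_i$. I then set $c=\bigwedge_{i\in S}(a_i\vee z_i)\in A$, a finite meet. That $c\neq 1$ is immediate: fixing any $i_0\in S$ gives $c\leq a_{i_0}\vee z_{i_0}\neq 1$, hence $\epsilon(c)\neq 1$.

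To see $x\leq\epsilon(c)$ it is enough to verify $a_i/C_i^{\top}\leq c/C_i^{\top}$ in $A/C_i^{\top}$ for every $i$, since then $(a_i/C_i^{\top})_i\leq(c/C_i^{\top})_i$ already in $A_{\cal{C}}$, which by the definition of the order on the inductive limit yields $x\leq\epsilon(c)$ in $\tilde A$. Now the quotient map $A\to A/C_i^{\top}$ is a residuated lattice morphism, so $c/C_i^{\top}=\bigwedge_{j\in S}(a_j\vee z_j)/C_i^{\top}$; and for $j\neq i$ the blocks of the partition are orthogonal, $C_i\cap C_j=\{1\}$ in the Boolean algebra $({\rm CoAnn}(A),\vee^{\top},\cap,{}^{\top},\{1\},A)$ of Proposition \ref{CoAnnbool}, which forces $C_j\subseteq C_i^{\top}$ (the complement of $C_i$), so $a_j\vee z_j\in C_j\subseteq C_i^{\top}$ collapses to $1/C_i^{\top}$. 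Hence for $i\in S$ the meet reduces to $(a_i\vee z_i)/C_i^{\top}=a_i/C_i^{\top}\vee z_i/C_i^{\top}\geq a_i/C_i^{\top}$, while for $i\notin S$ every factor is $1/C_i^{\top}$, so $c/C_i^{\top}=1/C_i^{\top}\geq a_i/C_i^{\top}$. In either case the desired inequality holds, so $x\leq\epsilon(c)<1$, as required.

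The one step that needs genuine care — and the main obstacle — is this last verification: that the single choice $c=\bigwedge_{i\in S}(a_i\vee z_i)$ works simultaneously on all coordinates. This hinges on the orthogonality $C_i\cap C_j=\{1\}$ (hence $C_j\subseteq C_i^{\top}$) of distinct blocks, together with the fact that co-annihilators are $\wedge$-closed filters (Remark \ref{inF}). Everything else — reading off the partition representation from the inductive-limit construction, the criterion for $x=1$, $c\neq 1$, and $\epsilon(c)\neq 1$ from injectivity of $\epsilon$ — is routine. As an alternative one can instead start from the representation $x=\bigwedge_{i=1}^n(\epsilon(a_i)\vee e_i)$ of Lemma \ref{exprx}, observe that $e_i\leq\epsilon(z)$ whenever $z\in C_i$, and thereby bound each term by $\epsilon(a_i)\vee e_i\leq\epsilon(a_i)\vee\epsilon(z_i)=\epsilon(a_i\vee z_i)$, which leads to the same $c$.
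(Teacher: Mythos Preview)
Your proof is correct and follows essentially the same route as the paper's: represent $x$ by a tuple over some partition $\mathcal{C}=\{C_1,\dots,C_n\}$, locate an index where $a_i\notin C_i^{\top}$, pick $z_i\in C_i$ with $a_i\vee z_i\neq 1$, and exploit the orthogonality $C_j\subseteq C_i^{\top}$ for $j\neq i$.

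The one simplification the paper makes that you miss: there is no need to take the meet over all of $S$. Pick a \emph{single} index $i_0\in S$ and set $y=a_{i_0}\vee z_{i_0}$. Since $y\in C_{i_0}$ and $C_{i_0}\subseteq C_j^{\top}$ for every $j\neq i_0$, you get $y/C_j^{\top}=1/C_j^{\top}\geq a_j/C_j^{\top}$ on \emph{all} other coordinates automatically --- including those $j\in S\setminus\{i_0\}$ --- while on coordinate $i_0$ you have $y/C_{i_0}^{\top}=(a_{i_0}\vee z_{i_0})/C_{i_0}^{\top}\geq a_{i_0}/C_{i_0}^{\top}$. So your careful bookkeeping over $S$ is sound but unnecessary; one bad coordinate already does the job.
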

\begin{proof}
Let $x=[a]\in \tilde{A}\setminus \{1\}$, with $a=(a_{C}/C^{\top })
_{C\in {\cal{C}}}\in A_{\cal{C}}$, ${\cal{C}}\in \Pi (A)$. What we have to do is prove that there exists $y\in A$ such that $x\leq \epsilon (y)<1$.

$x\neq 1$, so there exists $C\in {\cal{C}}$ such that $a_{C}/C^{\top }\neq 1/C^{\top }$, that is $a_{C}\notin C^{\top }$ (see Lemma \ref{prop-cong}, (\ref{aF=0Fsau1F})), which means that there exists an element $b\in C$ such that $b\vee a_{C}\neq 1$. Set $y=b\vee a_{C}\in C$, since $b\in C$ and $C\in {\rm CoAnn}(A)\subseteq {\cal{F}}(A)$, by Proposition \ref{topfiltru}. Let $D\in {\cal{C}}\setminus \{C\}$, arbitrary, so $C\cap D=\{1\}$, hence $C\subseteq D^{\top }$ (see Proposition \ref{CoAnnbool}; the complement $D^{\top }$ of $D$ in the Boolean algebra ${\rm CoAnn}(A)$ equals its pseudocomplement). $y\in C$, therefore $y\in D^{\top }$, hence $y/D^{\top }=1/D^{\top }$, by Lemma \ref{prop-cong}, (\ref{aF=0Fsau1F}). But $C\cap C^{\top }=\{1\}$ and $y\neq 1$, so $y\in C\setminus \{1\}$, so $y\notin C^{\top }$, that is $y/C^{\top }\neq 1/C^{\top }$, by Lemma \ref{prop-cong}, (\ref{aF=0Fsau1F}).

So we have proven that $(y/D^{\top })_{D\in {\cal{C}}}$ has exactly one component different from 1, namely $y/C^{\top }$, and this component equals $(b\vee a_{C})/C^{\top }$, so it is greater than $a_{C}/C^{\top }$. Therefore we have: $x=[(a_{D}/D^{\top })_{D\in {\cal{C}}}]\leq [(y/D^{\top })_{D\in {\cal{C}}}]=\epsilon (y)<1$.\end{proof}

\begin{conjecture}\index{strongly co-Stone hull}
The strongly co-Stone hull of $A$, $\tilde{A}$, verifies the following universality property: for any strongly co-Stone residuated lattice $A_{1}$ and any morphism of residuated lattices $f:A\rightarrow A_{1}$ with the property that, for any $X\subseteq A$, we have $f(X^{\top })=f(X)^{\top }$, there exists a unique morphism of residuated lattices $\overline{f}:\tilde{A}\rightarrow A_{1}$ such that $\overline{f}\circ \epsilon =f$.

\begin{center}
\begin{picture}(60,60)(0,0)
\put(7,37){$A$}
\put(20,40){\vector(1,0){20}}
\put(42,37){$\tilde{A}$}
\put(28,44){$\epsilon $}
\put(45,35){\vector(0,-1){20}}

\put(47,22){$\overline{f}$}
\put(42,5){$A_{1}$}
\put(18,35){\vector(1,-1){22}}
\put(16,22){$f$}
\end{picture}
\end{center}
\label{univco-Stone}

\end{conjecture}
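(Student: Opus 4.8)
We outline how one might establish Conjecture~\ref{univco-Stone}.

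The plan is to produce $\overline{f}$ from the universal property of the inductive limit $\tilde{A}=\varinjlim_{{\cal{C}}\in\Pi(A)}A_{\cal{C}}$, by exhibiting a compatible family of residuated lattice morphisms $f_{\cal{C}}\colon A_{\cal{C}}\to A_{1}$. The crucial preliminary observation is that the hypothesis on $f$ forces $f$ to respect co-annihilators globally. Indeed, for $C,D\in{\rm CoAnn}(A)$ one has $C=(C^{\top})^{\top}$ and $C\cap D=(C^{\top}\cup D^{\top})^{\top}$; hence, writing $X\cup Y$ for unions of the corresponding sets and using that $f$ is a morphism together with the hypothesis, $f(C)=f(C^{\top})^{\top}\in{\rm CoAnn}(A_{1})$, $f(C^{\top})=f(C)^{\top}$, $f(C\cap D)=(f(C^{\top})\cup f(D^{\top}))^{\top}=f(C)\cap f(D)$, $f(\{1\})=\{1\}$ and $f(A)=f(\{1\}^{\top})=\{1\}^{\top}=A_{1}$. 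Thus $\hat{f}\colon{\rm CoAnn}(A)\to{\rm CoAnn}(A_{1})$, $\hat{f}(C)=f(C)$, is a morphism of Boolean algebras. Since $A_{1}$ is strongly co-Stone, every element of ${\rm CoAnn}(A_{1})$ equals $e^{\top}$ for a \emph{unique} $e\in B(A_{1})$ (for $e\in B(A_{1})$ one has $e^{\top}=<\neg\, e>$, so this is uniqueness of Boolean generators of principal filters), and $\gamma\colon{\rm CoAnn}(A_{1})\to B(A_{1})$, $\gamma(e^{\top})=e$, is a Boolean isomorphism. Put $\theta=\gamma\circ\hat{f}\colon{\rm CoAnn}(A)\to B(A_{1})$, a morphism of Boolean algebras; it sends each ${\cal{C}}=\{C_{1},\dots,C_{n}\}\in\Pi(A)$ to a family $\theta(C_{1}),\dots,\theta(C_{n})$ in $B(A_{1})$ with $\bigvee_{i}\theta(C_{i})=1$ and $\theta(C_{i})\wedge\theta(C_{j})=0$ for $i\neq j$ (some $\theta(C_{i})$ may be $0$, which causes no trouble).

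For a fixed ${\cal{C}}=\{C_{1},\dots,C_{n}\}$, such a Boolean family produces a residuated lattice isomorphism $A_{1}\cong\prod_{i=1}^{n}A_{1}/<\theta(C_{i})>$, with inverse sending $(b_{i})_{i}$ to $\bigvee_{i=1}^{n}(\theta(C_{i})\wedge\widehat{b_{i}})$ for arbitrary preimages $\widehat{b_{i}}\in A_{1}$ of the $b_{i}$; moreover $f$ induces morphisms $\overline{\phi}_{i}\colon A/C_{i}^{\top}\to A_{1}/<\theta(C_{i})>$, well defined because unwinding the construction of $\theta$ gives $f(C_{i}^{\top})=\hat{f}(C_{i})^{\top}=<\theta(C_{i})>$. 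Let $f_{\cal{C}}\colon A_{\cal{C}}=\prod_{i}A/C_{i}^{\top}\to A_{1}$ be the composite of $\prod_{i}\overline{\phi}_{i}$ with that isomorphism; explicitly $f_{\cal{C}}((a_{i}/C_{i}^{\top})_{i})=\bigvee_{i=1}^{n}(\theta(C_{i})\wedge f(a_{i}))$, and as a composite of residuated lattice morphisms it is one. If ${\cal{C}}\leq{\cal{D}}$, then $f_{\cal{D}}\circ{\cal{P}}_{\cal{CD}}=f_{\cal{C}}$: for each $C\in{\cal{C}}$ the classes $D\in{\cal{D}}$ below $C$ satisfy $\bigvee_{D\subseteq C}\theta(D)=\theta(C)$ (as $\theta$ preserves the joins witnessing refinement), and $f(a_{C})$ factors out of the corresponding subjoin. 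Hence $(f_{\cal{C}})_{{\cal{C}}\in\Pi(A)}$ is a compatible cocone, and the universal property of $\tilde{A}$ yields a unique $\overline{f}\colon\tilde{A}\to A_{1}$ whose restriction to each $A_{\cal{C}}$ is $f_{\cal{C}}$. Taking ${\cal{C}}=\{A\}$, so $A_{\cal{C}}=A/\{1\}\cong A$ and $\theta(A)=1$, gives $f_{\{A\}}=f$, whence $\overline{f}\circ\epsilon=f$.

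For uniqueness, let $g\colon\tilde{A}\to A_{1}$ be any morphism with $g\circ\epsilon=f$. By Lemma~\ref{exprx}, every $x\in\tilde{A}$ can be written $x=\bigwedge_{i=1}^{n}(\epsilon(a_{i})\vee e_{i})$ with $a_{i}\in A$ and $e_{i}=[(0/C_{i}^{\top},1/C_{i})]\in B(\tilde{A})$, $C_{i}\in{\rm CoAnn}(A)$; since $g$ and $\overline{f}$ are morphisms agreeing on $\epsilon(A)$, it suffices to check they agree on each Boolean element $e_{C}:=[(0/C^{\top},1/C)]$. Reading off the computations behind Lemmas~\ref{esteco-Stone} and~\ref{intersET}, one gets in $\tilde{A}$ that $\epsilon(C^{\top})^{\top}=<e_{C}>$ and $\epsilon(C)^{\top}=<\neg\, e_{C}>$; equivalently $e_{C}\vee\epsilon(b)=1$ for all $b\in C^{\top}$ and $\neg\, e_{C}\vee\epsilon(c)=1$ for all $c\in C$. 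Applying $g$ and using $f(C^{\top})=<\theta(C)>$, $f(C)=<\neg\,\theta(C)>$ (both from the definition of $\theta$), the first relation gives $g(e_{C})\in f(C^{\top})^{\top}=<\neg\,\theta(C)>$, the second gives $\neg\, g(e_{C})\in f(C)^{\top}=<\theta(C)>$, so $g(e_{C})=\neg\,\theta(C)=\theta(C^{\top})$. But $\overline{f}(e_{C})=f_{\{C,C^{\top}\}}((0/C^{\top},1/C))=\theta(C^{\top})$ as well, by the explicit formula for $f_{\cal{C}}$. Hence $g(e_{C})=\overline{f}(e_{C})$ for every $C$ (the case $C=A$, where $e_{C}=0$, being trivial), so $g=\overline{f}$.

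The point at which a careless attempt fails is already the first step: the identity $f(C_{i}^{\top})=<\theta(C_{i})>$ — which underlies both the good definition of the $\overline{\phi}_{i}$ and the uniqueness estimate — has to be squeezed out of the bare hypothesis $f(X^{\top})=f(X)^{\top}$ together with the Boolean structure of ${\rm CoAnn}$ and the strong co-Stone property of $A_{1}$. A second, purely clerical hazard is keeping track, inside $A_{\{C,C^{\top}\}}=A/C^{\top}\times A/C$, of which quotient occupies which slot of the partition $\{C,C^{\top}\}$, so that the complemented value $\theta(C^{\top})$ — not $\theta(C)$ — comes out in both $\overline{f}(e_{C})$ and the uniqueness bound; that these two independent computations land on the same element is a reassuring internal check. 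The remaining loose ends (the trivial partition $\{A\}$, where $e_{C}=0$, and partitions on which $\theta$ kills a class, yielding trivial factors $A_{1}/<0>$) are routine. Finally, it is worth noting that $\epsilon$ itself need not satisfy $\epsilon(X^{\top})=\epsilon(X)^{\top}$ — it does precisely when $A$ is already strongly co-Stone — so the statement should be read as a representability property for co-annihilator-preserving morphisms out of $A$, not as an assertion that $\epsilon$ is one of them.
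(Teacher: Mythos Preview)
The paper does not prove this statement: it is explicitly labeled a \emph{Conjecture} and no argument is supplied. So there is no ``paper's own proof'' to compare your attempt against.

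As for the substance of your sketch, the overall architecture is sound and is the natural one: extract from the hypothesis $f(X^{\top})=f(X)^{\top}$ a Boolean morphism ${\rm CoAnn}(A)\to B(A_{1})$, use it to split $A_{1}$ along each partition ${\cal C}\in\Pi(A)$ via a Chinese-remainder decomposition, build the compatible cocone $(f_{\cal C})$, and invoke the universal property of the inductive limit; uniqueness via Lemma~\ref{exprx} and the computation of $g(e_{C})$ is also the right idea. A few points would need tightening in a full proof: the isomorphism $A_{1}\cong\prod_{i}A_{1}/\langle\theta(C_{i})\rangle$ for a Boolean partition (possibly with zero terms) in an arbitrary residuated lattice should be stated and justified carefully; the well-definedness of each factor map $A/C_{i}^{\top}\to A_{1}/\langle\theta(C_{i})\rangle$ uses that $f(C_{i}^{\top})\subseteq\langle\theta(C_{i})\rangle$, which you do derive but which deserves to be isolated; and in the uniqueness step, the passage from $\theta(C)\leq\neg\,g(e_{C})$ to $g(e_{C})\leq\neg\,\theta(C)$ relies on the contraposition $a\leq\neg\,b\Leftrightarrow b\leq\neg\,a$, valid in any residuated lattice, which is worth making explicit. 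None of these is a genuine obstacle, so your outline is a credible route to settling the conjecture, but since the paper leaves it open you are going beyond what the author claims.
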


A similar definition can be given for the unique strongly co-Stone hull of a bounded distributive lattice.\index{strongly co-Stone hull} This definition is in accordance with the one from \cite{dav}.

\begin{proposition}
${\cal{L}}$ preserves the strongly co-Stone hull, namely the reticulation of the strongly co-Stone hull of a residuated lattice equals the strongly co-Stone hull of the reticulation of that residuated lattice.\index{strongly co-Stone hull} 
\end{proposition}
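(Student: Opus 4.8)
The plan is to build the strongly co-Stone hull from the three operations that constitute it --- finite direct products, quotients by filters, and an inductive limit over the poset of finite partitions of the co-annihilator algebra --- and to use that the reticulation functor ${\cal L}$ commutes with each of them, while carrying ${\rm CoAnn}(A)$ isomorphically onto ${\rm CoAnn}({\cal L}(A))$.

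First I would fix the reticulation $({\cal L}(A),\lambda)$ of $A$ and invoke Proposition~\ref{izommu}: the map $\mu\colon{\rm CoAnn}(A)\to{\rm CoAnn}({\cal L}(A))$, $\mu(F)=\lambda(F)$, is an isomorphism of Boolean algebras, and by Corollary~\ref{izommu-1} (equivalently Remark~\ref{comuttop}) it satisfies $\mu(F^\top)=\mu(F)^\top$. Being a Boolean isomorphism, $\mu$ induces an order isomorphism of the posets of finite partitions $\tilde\mu\colon\Pi(A)=P({\rm CoAnn}(A))\to P({\rm CoAnn}({\cal L}(A)))=\Pi({\cal L}(A))$, ${\cal C}\mapsto\tilde\mu({\cal C})=\{\mu(C)\mid C\in{\cal C}\}$: it sends partitions to partitions and, since $D\subseteq C$ iff $\mu(D)\subseteq\mu(C)$, it sends refinements to refinements in both directions. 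Hence the directed index sets of the two inductive systems defining $\tilde A$ and $\widetilde{{\cal L}(A)}$ are canonically identified.

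Next, for each ${\cal C}\in\Pi(A)$ I would produce an isomorphism ${\cal L}(A_{\cal C})\cong({\cal L}(A))_{\tilde\mu({\cal C})}$. Since $A_{\cal C}=\prod_{C\in{\cal C}}A/(C^\top)$ is a finite direct product, Proposition~\ref{findirprod} gives ${\cal L}(A_{\cal C})\cong\prod_{C\in{\cal C}}{\cal L}(A/(C^\top))$; Proposition~\ref{presquot} gives ${\cal L}(A/(C^\top))\cong{\cal L}(A)/\lambda(C^\top)$; and $\lambda(C^\top)=\lambda(C)^\top=\mu(C)^\top$ by Remark~\ref{comuttop}. Reindexing the product along $\tilde\mu$ yields $\prod_{C\in{\cal C}}{\cal L}(A)/(\mu(C)^\top)=\prod_{C'\in\tilde\mu({\cal C})}{\cal L}(A)/(C'^\top)=({\cal L}(A))_{\tilde\mu({\cal C})}$, which is precisely the object indexed by $\tilde\mu({\cal C})$ in the inductive system defining $\widetilde{{\cal L}(A)}$.

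The step I expect to be the main obstacle is checking that these componentwise isomorphisms are compatible with the transition morphisms, i.e.\ that for ${\cal C}\leq{\cal D}$ the square with ${\cal L}({\cal P}_{{\cal C}{\cal D}})\colon{\cal L}(A_{\cal C})\to{\cal L}(A_{\cal D})$ on top, the corresponding transition map ${\cal P}_{\tilde\mu({\cal C})\tilde\mu({\cal D})}$ of the second system on the bottom, and the isomorphisms above on the sides, commutes. This reduces to a diagram chase: on a single pair of coordinates ${\cal P}_{{\cal C}{\cal D}}$ is the canonical quotient map $A/(C^\top)\to A/(D^\top)$ (well defined because $D\subseteq C$ forces $C^\top\subseteq D^\top$), and applying ${\cal L}$ together with the quotient-preservation isomorphisms of Proposition~\ref{presquot} and the product-preservation isomorphisms of Proposition~\ref{findirprod} turns it into the canonical map ${\cal L}(A)/(\mu(C)^\top)\to{\cal L}(A)/(\mu(D)^\top)$, which is exactly what ${\cal P}_{\tilde\mu({\cal C})\tilde\mu({\cal D})}$ does on the coordinates $\mu(C),\mu(D)$ (valid since $\mu(D)\subseteq\mu(C)$); the index bookkeeping matches because $\mu$ preserves the order. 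Granting this, Lemma~\ref{izomlimind} gives an isomorphism $\varinjlim_{{\cal C}\in\Pi(A)}{\cal L}(A_{\cal C})\cong\varinjlim_{{\cal C}\in\Pi(A)}({\cal L}(A))_{\tilde\mu({\cal C})}$, and since $\tilde\mu$ is an isomorphism of directed sets the latter equals $\varinjlim_{{\cal C}'\in\Pi({\cal L}(A))}({\cal L}(A))_{{\cal C}'}=\widetilde{{\cal L}(A)}$. Finally, Proposition~\ref{limind} (preservation of inductive limits) yields ${\cal L}(\tilde A)={\cal L}\bigl(\varinjlim_{\cal C}A_{\cal C}\bigr)\cong\varinjlim_{\cal C}{\cal L}(A_{\cal C})$; composing the two isomorphisms shows that ${\cal L}(\tilde A)$ is isomorphic to $\widetilde{{\cal L}(A)}$, as desired.
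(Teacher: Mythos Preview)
Your proposal is correct and follows essentially the same route as the paper: both arguments identify $\Pi(A)$ with $\Pi({\cal L}(A))$ via the Boolean isomorphism $\mu$ of Proposition~\ref{izommu}, build componentwise isomorphisms ${\cal L}(A_{\cal C})\cong({\cal L}(A))_{\tilde\mu({\cal C})}$ from Propositions~\ref{findirprod} and~\ref{presquot} together with Remark~\ref{comuttop}, verify the compatibility square with the transition maps, and then conclude by combining Lemma~\ref{izomlimind} with Proposition~\ref{limind}. The only cosmetic differences are the direction in which the isomorphisms $f_{\cal C}$ are written and the order in which Lemma~\ref{izomlimind} and Proposition~\ref{limind} are invoked.
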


\begin{proof}
Let ${\cal{L}}(\tilde{A})$ be the reticulation of $\tilde{A}$ and $\widetilde{{\cal{L}}(A)}$ be the strongly co-Stone hull of the reticulation $({\cal{L}}(A),\lambda )$ of $A$. We will prove that there exists a bounded lattice isomorphism from $\widetilde{{\cal{L}}(A)}$ to ${\cal{L}}(\tilde{A})$.

The order isomorphism $\mu $ from the proof of Proposition \ref{izommu} obviously induces an order isomorphism $\nu :\Pi (A)\rightarrow \Pi({\cal{L}}(A))$, for all ${\cal{C}}\in \Pi (A)$, $\nu ({\cal{C}})=\{\mu (C)|C\in {\cal{C}}\}$. $\displaystyle \tilde{A}=\varinjlim _{{\cal{C}}\in \Pi (A)}(A_{\cal{C}},{\cal{P}}_{\cal{CD}})$, hence, by Proposition \ref{limind}, $\displaystyle {\cal{L}}(\tilde{A})=\varinjlim _{{\cal{C}}\in \Pi (A)}({\cal{L}}(A_{\cal{C}}),{\cal{L}}({\cal{P}}_{\cal{CD}}))$, and $\displaystyle \widetilde{{\cal{L}}(A)}=\varinjlim _{\stackrel{\scriptstyle {\cal{E}},{\cal{F}}\in \Pi ({\cal{L}}(A)),}{\scriptstyle {\cal{E}}\leq {\cal{F}}}}({\cal{L}}(A)_{\cal{E}},{\cal{Q}}_{\cal{EF}})=\varinjlim _{\stackrel{\scriptstyle {\cal{C}},{\cal{D}}\in \Pi (A),}{\scriptstyle {\cal{C}}\leq {\cal{D}}}}({\cal{L}}(A)_{\nu ({\cal{C}})},{\cal{Q}}_{\nu ({\cal{C}})\nu ({\cal{D}})})$, where, in conformity to the construction of the strongly co-Stone hull of a bounded distributive lattice, the ${\cal{Q}}_{\cal{EF}}$ are defined this way: for all ${\cal{E}},{\cal{F}}\in \Pi ({\cal{L}}(A))$ with ${\cal{E}}\leq {\cal{F}}$, for all $(a_{E})_{E\in {\cal{E}}}\subseteq A$, ${\cal{Q}}_{\cal{EF}}((\lambda (a_{E})/E^{\top })_{E\in {\cal{E}}})=(\lambda (a_{F})/F^{\top })_{F\in {\cal{F}}}$, where $a_{F}=a_{E}$ iff $F\subseteq E$. It follows that, for all ${\cal{C}},{\cal{D}}\in \Pi (A)$ with ${\cal{C}}\leq {\cal{D}}$, ${\cal{Q}}_{\nu ({\cal{C}})\nu ({\cal{D}})}$ is defined as follows: for all $x=(\lambda (a_{\mu (C)})/\mu (C)^{\top })_{C\in {\cal{C}}}\in {\cal{L}}(A)_{\nu ({\cal{C}})}$, with $(a_{\mu (C)})_{C\in {\cal{C}}}\subseteq A$, ${\cal{Q}}_{\nu ({\cal{C}})\nu ({\cal{D}})}(x)=(\lambda (a_{\mu (D)})/\mu (D)^{\top })_{D\in {\cal{D}}}=(\lambda (a_{\mu (D)})/\lambda (D)^{\top })_{D\in {\cal{D}}}=$\linebreak $(\lambda (a_{\mu (D)})/\lambda (D^{\top }))_{D\in {\cal{D}}}$, where $a_{\mu (D)}=a_{\mu (C)}$ iff $\mu (D)\subseteq \mu (C)$ iff $D\subseteq C$ (since $\mu $ is an order isomorphism); we have used Remark \ref{comuttop}.

Let ${\cal{C}}\in \Pi (A)$, arbitrary but fixed. $\displaystyle A_{\cal{C}}=\prod _{C\in {\cal{C}}}A/C^{\top }$ and $\displaystyle {\cal{L}}(A)_{\nu ({\cal{C}})}=\prod _{C\in {\cal{C}}}{\cal{L}}(A)/\mu (C)^{\top }=\prod _{C\in {\cal{C}}}{\cal{L}}(A)/\lambda (C)^{\top }=\prod _{C\in {\cal{C}}}{\cal{L}}(A)/\lambda (C^{\top })$, as Remark \ref{comuttop} shows. By Proposition \ref{findirprod}, $\displaystyle {\cal{L}}(A_{\cal{C}})=\prod _{C\in {\cal{C}}}{\cal{L}}(A/C^{\top })$. By Proposition \ref{presquot}, for each $C\in {\cal{C}}$, the function $h_{C}:{\cal{L}}(A)/\lambda (C^{\top })\rightarrow {\cal{L}}(A/C^{\top })$, for all $a\in A$, $h_{C}(\lambda (a)/\lambda (C^{\top }))=\lambda _{C}(a/C^{\top })$ is a bounded lattice isomorphism, where we denoted by $({\cal{L}}(A/C^{\top }),\lambda _{C})$ the reticulation of $A/C^{\top }$. Let $f_{\cal{C}}:{\cal{L}}(A)_{\nu ({\cal{C}})}\rightarrow {\cal{L}}(A_{\cal{C}})$, for all $x$ as above, $f_{\cal{C}}(x)=(h_{C}(\lambda(a_{\mu (C)})/\lambda (C^{\top })))_{C\in {\cal{C}}}=$\linebreak $(\lambda _{C}(a_{\mu (C)}/C^{\top }))_{C\in {\cal{C}}}$. The fact that each $h_{C}$ is well defined and it is a bounded lattice isomorphism implies that $f_{\cal{C}}$ is well defined and it is a bounded lattice isomorphism.

Let ${\cal{C}},{\cal{D}}\in \Pi (A)$ with ${\cal{C}}\leq {\cal{D}}$, arbitrary but fixed. Let $x$ be as above and $(a_{\mu (D)})_{D\in {\cal{D}}}\subseteq A$, with $a_{\mu (D)}=a_{\mu (C)}$ iff $\mu (D)\subseteq \mu (C)$ iff $D\subseteq C$ (since $\mu $ is an order isomorphism).

Then $f_{\cal{D}}({\cal{Q}}_{\nu ({\cal{C}})\nu ({\cal{D}})}(x))=f_{\cal{D}}((\lambda (a_{\mu (D)})/\lambda (D^{\top }))_{D\in {\cal{D}}})=(\lambda _{D}(a_{\mu (D)}/$\linebreak $D^{\top }))_{D\in {\cal{D}}}$.

On the other hand, according to Proposition \ref{findirprod}, $\displaystyle ({\cal{L}}(A_{\cal{C}}),\prod _{C\in {\cal{C}}}\lambda _{C})$ is the reticulation of $A_{\cal{C}}$ and $\displaystyle ({\cal{L}}(A_{\cal{D}}),\prod _{D\in {\cal{D}}}\lambda _{D})$ is the reticulation of $A_{\cal{D}}$, thus, by Proposition \ref{`lmorfisme`} and Definition \ref{deflmorfisme}, we obtain the commutative diagram below and thus the equalities that follow it:

\begin{center}
\begin{picture}(170,73)(0,0)
\put(0,34){$\displaystyle \prod _{C\in {\cal{C}}}\lambda _{C}$}
\put(35,45){\vector(0,-1){24}}
\put(30,10){${\cal{L}}(A_{\cal{C}})$}
\put(30,48){$A_{\cal{C}}$}
\put(60,15){\vector(1,0){48}}
\put(65,5){${\cal{L}}({\cal{P}}_{\cal{CD}})$}
\put(44,53){\vector(1,0){64}}
\put(65,57){${\cal{P}}_{\cal{CD}}$}
\put(110,10){${\cal{L}}(A_{\cal{D}})$}
\put(110,48){$A_{\cal{D}}$}
\put(115,45){\vector(0,-1){24}}
\put(117,34){$\displaystyle \prod _{D\in {\cal{D}}}\lambda _{D}$}
\end{picture}
\end{center}

$\displaystyle {\cal{L}}({\cal{P}}_{\cal{CD}})(f_{\cal{C}}(x))={\cal{L}}({\cal{P}}_{\cal{CD}})((\lambda _{C}(a_{\mu (C)}/C^{\top }))_{C\in {\cal{C}}})={\cal{L}}({\cal{P}}_{\cal{CD}})((\prod _{C\in {\cal{C}}})$\linebreak $\displaystyle ((a_{\mu (C)}/C^{\top })_{C\in {\cal{C}}}))=(\prod _{D\in {\cal{D}}}\lambda _{D})({\cal{P}}_{\cal{CD}}((a_{\mu (C)}/C^{\top })_{C\in {\cal{C}}}))=(\lambda _{D}(a_{\mu (D)}/$\linebreak $D^{\top }))_{D\in {\cal{D}}}$, where again $a_{\mu (D)}=a_{\mu (C)}$ iff $D\subseteq C$.

So $f_{\cal{D}}({\cal{Q}}_{\nu ({\cal{C}})\nu ({\cal{D}})}(x))= {\cal{L}}({\cal{P}}_{\cal{CD}})(f_{\cal{C}}(x))$ for all $x\in {\cal{L}}(A)_{\nu ({\cal{C}})}$ arbitrary, hence $f_{\cal{D}}\circ {\cal{Q}}_{\nu ({\cal{C}})\nu ({\cal{D}})}={\cal{L}}({\cal{P}}_{\cal{CD}})\circ f_{\cal{C}}$, that is we have the commutative diagram below.

\begin{center}
\begin{picture}(170,73)(0,0)
\put(0,30){${\cal{Q}}_{\nu ({\cal{C}})\nu ({\cal{D}})}$}
\put(45,45){\vector(0,-1){24}}
\put(40,10){${\cal{L}}(A)_{\nu ({\cal{D}})}$}
\put(40,48){${\cal{L}}(A)_{\nu ({\cal{C}})}$}
\put(84,15){\vector(1,0){24}}
\put(90,5){$f_{\cal{D}}$}
\put(84,53){\vector(1,0){24}}
\put(90,57){$f_{\cal{C}}$}
\put(110,10){${\cal{L}}(A_{\cal{D}})$}
\put(110,48){${\cal{L}}(A_{\cal{C}})$}
\put(115,45){\vector(0,-1){24}}
\put(117,30){${\cal{L}}({\cal{P}}_{\cal{CD}})$}
\end{picture}
\end{center}

Let us notice that we are situated in the conditions of Lemma \ref{izomlimind}. By the above, we have the inductive systems $(({\cal{L}}(A)_{\nu ({\cal{C}})})_{{\cal{C}}\in \Pi(A)},$\linebreak $({\cal{Q}}_{\nu ({\cal{C}})\nu ({\cal{D}})})_{{\cal{C}},{\cal{D}}\in \Pi(A),{\cal{C}}\leq {\cal{D}}})$ and $(({\cal{L}}(A_{\cal{C}}))_{{\cal{C}}\in \Pi(A)},({\cal{L}}({\cal{P}}_{\cal{CD}}))_{{\cal{C}},{\cal{D}}\in \Pi(A),{\cal{C}}\leq {\cal{D}}})$ in the category of bounded distributive lattices, and their inductive limits are $\widetilde{{\cal{L}}(A)}$ and ${\cal{L}}(\tilde{A})$, respectively. For each ${\cal{C}}\in \Pi(A)$, we have the bounded lattice isomorphism $f_{\cal{C}}:{\cal{L}}(A)_{\nu ({\cal{C}})}\rightarrow {\cal{L}}(A_{\cal{C}})$, and these isomorphisms verify: for all ${\cal{C}},{\cal{D}}\in \Pi (A)$ with ${\cal{C}}\leq {\cal{D}}$, $f_{\cal{D}}\circ {\cal{Q}}_{\nu ({\cal{C}})\nu ({\cal{D}})}={\cal{L}}({\cal{P}}_{\cal{CD}})\circ f_{\cal{C}}$.

Therefore, by Lemma \ref{izomlimind}, it follows that $\widetilde{{\cal{L}}(A)}$ and ${\cal{L}}(\tilde{A})$ are isomorphic bounded lattices (one isomorphism between them being $\varphi :\widetilde{{\cal{L}}(A)}\rightarrow {\cal{L}}(\tilde{A})$, for all ${\cal{C}}\in \Pi (A)$, for all $x\in {\cal{L}}(A)_{\nu ({\cal{C}})}$, $\varphi ([x])=[f_{\cal{C}}(x)]$, as Lemma \ref{izomlimind} shows).\end{proof}

\begin{example}
In this example we will determine the strongly co-Stone hull of the residuated lattice $A$ in Example \ref{lrex0,5}.\index{strongly co-Stone hull}

Let $B={\rm CoAnn}(A)$. $0^{\top }=a^{\top }=\{1\}$, $1^{\top }=A$, $b^{\top }=\{c,1\}=<c>$ and $c^{\top }=\{b,1\}=<b>$, hence $B=\{1^{\top },b^{\top },c^{\top },0^{\top }\}$ and $\Pi (A)=P(B)=\{\{1^{\top }\},\{b^{\top },c^{\top }\}\}$. Let ${\cal{C}}=\{1^{\top }\}$ and ${\cal{D}}=\{b^{\top },c^{\top }\}$. ${\cal{C}}\leq {\cal{D}}$. $A_{\cal{C}}=A/1^{\top }=A/A=\{1/A\}$ and $A_{\cal{D}}=A/b^{\top }\times A/c^{\top }$.

As the table of the operation $\leftrightarrow $ shows, $0/b^{\top }=\{0\}$, $a/b^{\top }=\{a,b\}=b/b^{\top }$ and $c/b^{\top }=1/b^{\top }=b^{\top }$, so $A/b^{\top }=\{0/b^{\top },a/b^{\top },1/b^{\top }\}$, and $0/c^{\top }=\{0\}$, $a/c^{\top }=\{a,c\}=c/c^{\top }$ and $b/c^{\top }=1/c^{\top }=c^{\top }$, so $A/c^{\top }=\{0/c^{\top },a/c^{\top },$\linebreak $1/c^{\top }\}$. Therefore $A_{\cal{D}}=\{0/b^{\top },a/b^{\top },1/b^{\top }\}\times \{0/c^{\top },a/c^{\top },1/c^{\top }\}=\{0,x_{0a},$\linebreak $x_{01},x_{a0},x_{aa},x_{a1},x_{10},x_{1a},1\}$, where we denoted: $0=(0/b^{\top },0/c^{\top })$, $1=(1/b^{\top },1/c^{\top })$ and $x_{ij}=(i/b^{\top },j/c^{\top })$ for all $i,j\in \{0,a,1\}$ with $(i,j)\notin \{(0,0),(1,1)\}$.

The strongly co-Stone hull of $A$ is $\displaystyle \tilde{A}=\varinjlim _{{\cal{E}}\in \Pi (A)}A_{\cal{E}}=A_{\cal{D}}$, because, as is easily seen, $(A_{\cal{D}},\{{\cal{P}}_{\cal{CD}},{\rm id}_{\cal{D}}\})$ is the inductive limit of the inductive system $(\{A_{\cal{C}},A_{\cal{D}}\},\{{\cal{P}}_{\cal{CD}}\})$.

The operations of $A_{\cal{D}}=\tilde{A}$ are defined componentwise from those of the quotient lattices $A/b^{\top }$ and $A/c^{\top }$, hence, like in $A$, $\odot =\wedge $ also in $\tilde{A}$. As shown by Lemma \ref{prop-cong}, (\ref{aF-leq-bF}), $A/b^{\top }$ and $A/c^{\top }$ share the same lattice structure, namely that of the three-element chain, hence the lattice structure of $A_{\cal{D}}=\tilde{A}$ is the following:

\begin{center}
\begin{picture}(120,110)(0,0)
\put(60,15){\circle*{3}}

\put(58,3){$0$}
\put(60,15){\line(-1,1){40}}
\put(40,35){\circle*{3}}
\put(23,32){$x_{0a}$}

\put(60,15){\line(1,1){40}}

\put(80,35){\circle*{3}}
\put(84,32){$x_{a0}$}
\put(40,35){\line(1,1){40}}
\put(80,35){\line(-1,1){40}}
\put(60,55){\circle*{3}}
\put(55,42){$x_{aa}$}
\put(20,55){\circle*{3}}
\put(3,52){$x_{01}$}
\put(40,75){\circle*{3}}
\put(23,72){$x_{a1}$}
\put(100,55){\circle*{3}}
\put(104,52){$x_{10}$}
\put(80,75){\circle*{3}}
\put(84,72){$x_{1a}$}
\put(60,95){\line(-1,-1){40}}

\put(60,95){\line(1,-1){40}}
\put(60,95){\circle*{3}}
\put(58,100){$1$}
\end{picture}
\end{center}

Here is the table of the operation $\rightarrow $ in $A_{\cal{D}}=\tilde{A}$:

\begin{center}
\begin{tabular}{c|ccccccccc}
$\rightarrow $ & $0$ & $x_{0a}$ & $x_{01}$ & $x_{a0}$ & $x_{aa}$ & $x_{a1}$ & $x_{10}$ & $x_{1a}$ & $1$\\ \hline
$0$ & $1$ & $1$ & $1$ & $1$ & $1$ & $1$ & $1$ & $1$ & $1$ \\
$x_{0a}$ & $0$ & $1$ & $1$ & $x_{10}$ & $1$ & $1$ & $x_{10}$ & $1$ & $1$ \\
$x_{01}$ & $0$ & $x_{1a}$ & $1$ & $x_{10}$ & $x_{1a}$ & $1$ & $x_{10}$ & $x_{1a}$ & $1$ \\
$x_{a0}$ & $0$ & $x_{01}$ & $x_{01}$ & $1$ & $1$ & $1$ & $1$ & $1$ & $1$ \\
$x_{aa}$ & $0$ & $x_{01}$ & $x_{01}$ & $x_{10}$ & $1$ & $1$ & $x_{10}$ & $1$ & $1$ \\
$x_{a1}$ & $0$ & $x_{0a}$ & $x_{01}$ & $x_{10}$ & $x_{1a}$ & $1$ & $x_{10}$ & $x_{1a}$ & $1$ \\
$x_{10}$ & $0$ & $x_{01}$ & $x_{01}$ & $x_{a1}$ & $x_{a1}$ & $x_{a1}$ & $1$ & $1$ & $1$ \\
$x_{1a}$ & $0$ & $x_{01}$ & $x_{01}$ & $x_{a0}$ & $x_{a1}$ & $x_{a1}$ & $x_{10}$ & $1$ & $1$ \\
$1$ & $0$ & $x_{0a}$ & $x_{01}$ & $x_{a0}$ & $x_{aa}$ & $x_{a1}$ & $x_{10}$ & $x_{1a}$ & $1$
\end{tabular}
\end{center}
\end{example}

\end{document}